\newtheorem{theorem}{Theorem}[section]
\newtheorem{remark}[theorem]{Remark}
\newtheorem{lemma}[theorem]{Lemma}
\newtheorem{corollary}[theorem]{Corollary}
\newtheorem{definition}[theorem]{Definition}
\newtheorem{example}[theorem]{Example}
\newtheorem{proposition}[theorem]{Proposition}
\numberwithin{equation}{section}
\newcommand{\C}{{\mathcal C}}
\newcommand{\quotes}[1]{``#1''}
\begin{document}
\title{\bf {Anisotropic conformal change of conic pseudo-Finsler surfaces, I}} 
 
 \author{{\bf  S. G. Elgendi$^{1}$,  Nabil L. Youssef$\,^2$,    A. A. Kotb$^3$ and Ebtsam H. Taha$^2$ }}
\date{}

\maketitle
\vspace{-1.15cm}

\begin{center}
 $^{1}$ Department of Mathematics, Faculty of Science,\\ Islamic University of Madinah,  Madinah, Kingdom of Saudia Arabia
\vspace{0.2cm}
\end{center}

\begin{center}
{$^2$Department of Mathematics, Faculty of Science,\\
Cairo University, Giza, Egypt}
\vspace{-8pt}
\end{center}

\begin{center}
{$^3$Department of Mathematics, Faculty of Science,\\
Damietta University, Damietta , Egypt}
\vspace{-8pt}
\end{center}

\begin{center}
salah.ali@fsc.bu.edu.eg, salahelgendi@yahoo.com\\
nlyoussef@sci.cu.edu.eg, nlyoussef2003@yahoo.fr\\
alikotb@du.edu.eg,  kotbmath2010@yahoo.com\\
ebtsam.taha@sci.cu.edu.eg, ebtsam.h.taha@hotmail.com

\end{center}

\vspace{0.7cm}


\begin{abstract}
The present work is devoted to  investigate  anisotropic conformal transformation of  conic pseudo-Finsler surfaces $(M,F)$, that is, $ F(x,y)\longmapsto \overline{F}(x,y)=e^{\phi(x,y)}F(x,y)$, where the function $\phi(x,y)$ depends on both position $x$ and direction $y$, contrary to the ordinary (isotropic) conformal transformation which depends on position only. If $F$ is a pseudo-Finsler metric, the above transformation does not yield necessarily a pseudo-Finsler metric. Consequently, we find out necessary and sufficient condition for a (conic) pseudo-Finsler surface $(M,F)$ to be transformed to a (conic) pseudo-Finsler surface $(M,\overline{F})$ under the transformation $\overline{F}=e^{\phi(x,y)}F$. In general dimension, it is extremely difficult to find the anisotropic conformal change of the inverse metric tensor in a tensorial form. However, by using the modified Berwald frame on a Finsler surface, we obtain the change of the components of the inverse metric tensor in a tensorial form.  This progress enables us to study the transformation of the Finslerian geometric objects and the geometric properties associated with the transformed  Finsler function  $\overline{F}$.  In contrast to isotropic conformal transformation, we have a non-homothetic  conformal factor $\phi(x,y)$ that preserves the geodesic spray. Also, we find out some invariant geometric objects under the anisotropic conformal change.  Furthermore, we investigate a sufficient condition for $\overline{F}$ to be dually flat or/and  projectively flat.  Finally, we study some special cases of the conformal factor $\phi(x,y)$. Various examples are provided whenever the situation needs.
\end{abstract}

\noindent{\bf Keywords:\/}\,   conic pseudo-Finsler surface; modified Berwald frame; anisotropic conformal change;  projectively flat; dually flat

\medskip\noindent{\bf MSC 2020:\/}  53B40, 53C60

\section*{Introduction}
 Conformal transformations have been investigated in different frameworks, since they have some important geometric features \cite{ Hashiguchi76, Hassan,  Fuster, Shenbook16, Youssef18} and successful applications such as those in physics, biology and ecology \cite{ An.In.Ma1993,anisotropic22,  anisotropic, Maluf, Voicu}. 
 In the conformal transformation theory, it is useful to study the geometric properties and  geometric objects that are preserved under  conformal transformations. In Riemannian  geometry, (isotropic) conformal transformation is an angle-preserving. In pseudo Riemannian metrics with signature (Lorentzian metrics),  it preserves causality. In parallelizable manifolds, it preserves both angles and causality as each parallelization structure induces a pseudo-Riemannian (or Lorentzian) metric, cf. \cite{Maluf, Youssef18}.
In Finsler geometry, (isotropic) conformal transformation is angle-preserving \cite{Hashiguchi76} and leaves the geodesic spray  invariant if the conformal factor is a homothety \cite[Theorem 3.3]{Bacso Sandor-Cheng}. Further, each Randers space $(M, F= \alpha + \beta)$ has a globally defined nonholonomic frame which is called the Holland frame \cite{Holland}. This frame is an isotropic conformally invariant in the sense that the transformation $F(x,y)\longmapsto\overline{F}(x,y)=e^{\phi(x)}F(x,y)$ leaves the frame elements fixed \cite[Theorem 5.10.1]{Bucatarubook}.    

Given two pseudo-Finsler metrics $F(x,y)$ and $\overline{F}(x,y)$,  the anisotropic conformal transformation $F(x,y)\longmapsto \overline{F}(x,y)=e^{\phi(x,y)}F(x,y)$ preserves the lightcone \cite{anisotropic} and lightlike geodesics and their focal points \cite{ anisotropic2014, anisotropic}.  
The anisotropic conformal change in Finsler Geometry is not only valuable in applications, but also it has a great impact in the study of Finsler geometry. For example, in contrast to isotropic conformal change, the anisotropic conformal change of a pseudo-Finsler metric is not necessarily a pseudo-Finsler metric (see Theorem \ref{ness.suff.codtion.F.bar.Finsler}). In addition,  the anisotropic conformal change can send a psudo-Finsler metric to a pseudo-Riemannian one and vice-versa. 
 
Since conic pseudo-Finsler surfaces  are  used widely in applications, especially in physics, we study here the anisotropic conformal transformation $F(x,y)\longmapsto\overline{F}(x,y)=e^{\phi(x,y)}F(x,y)$ of a conic pseudo-Finsler metric $F(x,y)$.  First, under the anisotropic conformal change, we compute the components $\overline{g}_{ij}$ of the metric tensor of $\overline{F}$. Actually, the expression of $\overline{g}_{ij}$ contains the second derivative of the function $\phi$ with respect to directional arguments. Generally, this term  represents a severe obstacle to compute the components $\overline{g}^{ij}$ of the inverse metric tensor.
Keeping in mind that the inverse metric tensor is the door to find the geodesic spray, which enables to study the geometry of the transformed space $(M,\overline{F})$, this motivates us to consider the two-dimensional case to make use of the modified Berwald frame.  Consequently, we become able to derive some important geometric objects associated with the transformed metric $\overline{F}$, for example,  the transformed  Cartan  tensor,  main scalar, geodesic spray and  Barthel connection.  Further, we find out some invariant objects under anisotropic conformal transformations (see Theorem \ref{phi dh-closed}).

The present paper is organized in the following  manner.  
In~\S 1,  we recall some basic facts on the geometry of sprays and Finsler manifolds. 
In \S 2, we first show that the fact that  \quotes{two Finsler metrics are anisotropically conformally related} is not equivalent to the \quotes{proportionality of their associated  metric tensors}, contrary to what has been mentioned in  \cite[(11.1.1)]{Shenbook16}. Then, we investigate the anisotropic conformal transformations of  pseudo-Finsler surfaces. 
In \S 3, we characterize the anisotropic conformal transformations and  study their action on some important Finslerian geometric objects. Some anisotropic conformal invariant geometric properties are found out. 
In \S 4,  we study the case when the anisotropic conformal transformation is a projective transformation.  Moreover, we obtain sufficient conditions for $\overline{F}$ to be projectively flat and dually flat.   
Finally,  in \S 5, we consider two important special cases: the factor $\phi$ is a function of position only and the factor $\phi$ is a function of direction only. Interesting results in both cases are obtained. 

It should be noted that, various examples have been provided whenever the situation needs. The Maple’s code of these examples is presented in the Appendix at the end of the paper.


\section{Notation and preliminaries}
Let $ M $ be an n-dimensional smooth  manifold and $ \pi : TM \longrightarrow M $ the canonical projection of the tangent bundle $TM$ onto $M$. Let $TM_{0}:=TM\setminus(0)$ be the slit tangent bundle, $(0)$ being the null section, and $(x^{i}, y^i)$   the corresponding  coordinate system $TM$.  
  The natural almost-tangent structure $J$ of $T M$ is the vector $1$-form   defined  by $J = \frac{\partial}{\partial y^i} \otimes dx^i$. The vertical vector field
$\C=y^i\frac{\partial}{\partial y^i}$ on $TM$ is called  the
Liouville vector field. The set of smooth functions on $TM_0$ is denoted by $C^{\infty} (TM_0)$.

A spray on $M$ is a vector field $S$ on $ T M$ such that $JS = \C$ and
$[\C, S] = S$. Locally, it is  given by \cite{Gifone}
\begin{equation*}
  \label{eq:spray}
  S = y^i \frac{\partial}{\partial x^i} - 2G^i (x,y) \,\frac{\partial}{\partial y^i},
\end{equation*}
where the {spray coefficients} $G^i(x,y)$ are positively $2$-homogeneous
functions in  $y$ (or simply $h(2)$-functions). A nonlinear connection is defined by an $n$-dimensional distribution $H(TM_0) $ on $TM_0$  which is supplementary to the vertical distribution $V(TM_0) $. This means that for all $u \in TM_0$, we have
\begin{equation*}
T_u(TM_0) = V_u(TM_0) \oplus H_u(TM_0) .
\end{equation*}
The local basis of $V_u(TM_0)$ and $H_u(TM_0)$ are given, respectively by,  $\dot{\partial}_i:=\dfrac{\partial}{\partial y^i}$   and $\delta_i:=\dfrac{\delta}{\delta x^i}=\dfrac{\partial}{\partial x^i}-G^j_i(x,y)\dfrac{\partial}{\partial y^j}$. The coefficients of  Cartan nonlinear (or Barthel) connection  and Berwald connection are defined by  $G^j_i(x,y):=\dfrac{\partial G^j}{\partial y^i}$ and  $ G^h_{ij}(x,y):=\dfrac{\partial G^h_j}{\partial y^i}$, respectively.

\vspace{4pt}
In the following we set the definition of a (conic pseudo-) Finsler manifold which will be used throughout the paper.
\begin{definition}
A conic sub-bundle of $TM$ is a non-empty open subset $\mathcal{A}  \subset TM_0$ such that  $ \pi(\mathcal{A}) = M $ and 
$ \lambda v\in \mathcal{A}\quad  \forall v\in \mathcal{A} \text{ and } \,\forall \lambda\in\mathbb{R^{+}}$.
\end{definition}
\begin{definition}\label{pseudo-Finsler def}
  A \textbf{conic pseudo-Finsler  metric} on $ M $ is a function
  $\; F:\mathcal{A} \longrightarrow \mathbb{R}$
  which satisfies the following conditions:
  \begin{description}
    \item[(i)] $ F $ is smooth on $\mathcal{A}$, 
    \item[(ii)] $ F(x,y)$ is positively homogeneous of degree one in $y$: $F(x,\lambda y)=\lambda F(x,y)$ for all $(x,y)\in \mathcal{A}$  and $\lambda \in \mathbb{R^+},$
    \item[(iii)] The Finsler metric tensor $ g_{ij} (x,y)=\frac{1}{2}\dot{\partial}_{i}\dot{\partial}_{j}F^{2} (x,y)$
    is non-degenerate at each point of $ \mathcal{A}$.
  \end{description}
The pair $(M, F)$ is called a conic pseudo-Finsler manifold.

\vspace{6pt}
The Finsler metric $F$ is said to be:\\
$\bullet$ a \textbf{conic Finsler metric} if the  metric tensor $g_{ij}$ in $\textbf{\emph{(iii)}}$ is positive definite, or equivalently, $F$ is non-negative, and in this case $(M, F)$ is  a conic Finsler manifold.\\
$\bullet$ a \textbf{pseudo-Finsler metric} if $\mathcal{A}$ is replaced by $TM$, and in this case $(M, F)$ is  a pseudo-Finsler manifold.\\
$\bullet$ a \textbf{Finsler metric} if $\mathcal{A}$ is replaced by $TM$ and $g_{ij}$ in $\textbf{\emph{(iii)}}$ is positive definite,  and in this case $(M, F)$ is  a  Finsler manifold.
\end{definition}  
 
Each pseudo-Finsler metric $F$ induces a spray $S$ on $M$, for which the spray coefficients are given by $G^i = \frac{1}{4} g^{ij}(y^{k}\,\dot{\partial}_{j}\partial_{k} {F}^{2}-\partial_{j}{F}^{2})$ \cite{Gifone,Shenbook16}, this spray is called the  \emph{geodesic spray} of $F$.


\section{Anisotropic conformal transformation}\label{section two}
In this section, we introduce the anisotropic conformal transformation of a conic pseudo-Finsler metric $F$ and investigate its elementary properties. 
  
Conformality is classically defined as follows.
 
\begin{definition}\emph{\cite{rund}}
Let $F$ and $\overline{F}$ be two Finsler metric functions defined on a smooth manifold $M$. The two Finsler metric tensors $g_{ij}$ and  $ \overline{g}_{ij}$ resulting from $F$ and $\overline{F}$, respectively, are called conformal if there exists a factor of proportionality $\Psi(x,y)$ between the metric tensors:  
\begin{equation}
\overline{g}_{ij}(x,y)=\Psi(x,y)\,g_{ij}(x,y),
\end{equation}\label{conformality} 
where $\Psi(x,y)$ is a positive smooth function of position $x$ and direction $y$.
\end{definition}

     Suppose $g_{ij}$ and $\overline{g}_{ij}$ are conformal, then (\ref{conformality}) holds. 
  As $g_{ij}y^{i}y^{j}=F^{2}$, we get
  \begin{equation}\label{Fis.fu.proportional}
    \overline{F}(x,y)=\sqrt{\Psi(x,y)}\,F(x,y).
  \end{equation}
  
  This means that if $g_{ij}(x,y)$ and $\overline{g}_{ij}(x,y)$ are proportional, then $F(x,y)$ and $\overline{F}(x,y)$ are proportional, or, in other words, \eqref{conformality} implies  \eqref{Fis.fu.proportional}. The converse is not true in general \cite{rund}: proportional Finsler metric functions may yield non proportional Finsler metric tensors. Nevertheless, we have
   
\begin{lemma}
  Let two Finsler metric functions $F$ and $\overline{F}$ on $M$ be proportional, i.e., $\overline{F}(x,y)=\Psi(x,y)\,F(x,y)$. A necessary and sufficient condition for the associated Finsler metric tensors $g_{ij}$ and $\overline{g}_{ij}$ to be proportional is that $\Psi(x,y)$ is a functions of position $x$ only.
\end{lemma}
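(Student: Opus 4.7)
The plan is to pass from Finsler functions to metric tensors by applying the Hessian operator $\tfrac12\dot\partial_i\dot\partial_j$ to the squared relation $\overline{F}^{\,2}=\Psi^{2}F^{2}$, and then to exploit the homogeneity of the conformal factor $\Psi$ (forced by the fact that both $F$ and $\overline{F}$ are $1$-homogeneous in $y$, so $\Psi$ is $0$-homogeneous).

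Setting $\rho:=\Psi^{2}$ and writing $\ell_i:=g_{ij}y^{j}$, I would first expand
\[
\overline{g}_{ij}=\tfrac12\dot\partial_i\dot\partial_j(\rho F^{2})
=\rho\,g_{ij}+\tfrac12 (\dot\partial_i\dot\partial_j\rho)\,F^{2}
+(\dot\partial_i\rho)\,\ell_{j}+(\dot\partial_j\rho)\,\ell_{i},
\]
using $\dot\partial_i F^{2}=2\ell_i$. The sufficiency direction is then immediate: if $\Psi=\Psi(x)$, the three correction terms vanish and $\overline{g}_{ij}=\Psi^{2}g_{ij}$, proving proportionality of the metric tensors.

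For necessity, assume $\overline{g}_{ij}=\mu(x,y)\,g_{ij}$. Contracting with $y^{i}y^{j}$ gives $\overline{F}^{\,2}=\mu F^{2}$, so $\mu=\Psi^{2}=\rho$. Plugging back into the expansion above yields the tensorial identity
\[
\tfrac12 (\dot\partial_i\dot\partial_j\rho)\,F^{2}
+(\dot\partial_i\rho)\,\ell_{j}+(\dot\partial_j\rho)\,\ell_{i}=0.
\]
The main step is extracting pointwise information from this. Since $\Psi$ is $0$-homogeneous in $y$, so is $\rho$, and Euler's theorem gives $y^{j}\dot\partial_j\rho=0$ together with $y^{j}\dot\partial_i\dot\partial_j\rho=-\dot\partial_i\rho$. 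Contracting the displayed identity with $y^{j}$ and using $\ell_{j}y^{j}=F^{2}$, the first two terms combine to $\tfrac12(\dot\partial_i\rho)F^{2}$ while the third vanishes, forcing $\dot\partial_i\rho=0$ on $\mathcal{A}$. Hence $\rho$, and therefore $\Psi$ (assumed positive), depends on $x$ alone.

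The main obstacle I anticipate is purely bookkeeping: making sure the homogeneity relations $y^{j}\dot\partial_j\rho=0$ and $y^{j}\dot\partial_i\dot\partial_j\rho=-\dot\partial_i\rho$ are correctly invoked, and that $F^{2}\neq 0$ on the conic domain $\mathcal{A}$ is enough to conclude $\dot\partial_i\rho=0$. No deeper Finslerian machinery (Cartan tensor, Berwald frame, etc.) is needed — the argument rests entirely on the Euler relations and the single identity for $\overline{g}_{ij}$ obtained by differentiating $\overline{F}^{\,2}=\Psi^{2}F^{2}$ twice in the fibre.
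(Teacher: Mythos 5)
Your proposal is correct and follows essentially the same route as the paper: expand $\overline{g}_{ij}=\tfrac12\dot\partial_i\dot\partial_j(\Psi^2F^2)$, reduce proportionality to the vanishing of the correction terms, and contract with $y^j$ using the Euler homogeneity relations to force $\dot\partial_i\Psi=0$. The only (cosmetic) differences are that you work with $\rho=\Psi^2$ and that you explicitly justify that the proportionality factor must equal $\Psi^2$ by contracting with $y^iy^j$, a small step the paper leaves implicit.
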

\begin{proof}
 Let $F$ and $\overline{F}$ be proportional, that is,
  \begin{align*}
  \overline{F}(x,y)=\Psi(x,y)\,F(x,y).
\end{align*}   
  By squaring and differentiating with respect to $ y^{i} \text{ and }y^{j},$ we get
  \begin{equation*}
  \overline{g}_{ij}  =\Psi^2 g_{ij}+\{(\dot{\partial}_{i}\Psi)
  (\dot{\partial}_{j}\Psi)+\Psi(\dot{\partial}_{i}\dot{\partial}_{j}\Psi)\}F^2 + \Psi\{(\dot{\partial}_{j}\Psi)\dot{\partial}_{i}{F}^{2}
    +(\dot{\partial}_{i}\Psi)\dot{\partial}_{j}F^{2}\}.
  \end{equation*}
  Consequently, the metric tensors $g_{ij}$ and $\overline{g}_{ij}$ are proportional if and only if
  \begin{equation*}
  \{(\dot{\partial}_{i}\Psi)
  (\dot{\partial}_{j}\Psi)+\Psi(\dot{\partial}_{i}\dot{\partial}_{j}\Psi)\}F^2 + \Psi\{(\dot{\partial}_{j}\Psi)\dot{\partial}_{i}{F}^{2}
    +(\dot{\partial}_{i}\Psi)\dot{\partial}_{j}F^{2}\}=0.
  \end{equation*}
   Contracting both sides of the above equation by $ y^{j}$ and  using the homogeneity properties of $F$  and $\Psi$, we get
   $$\Psi(\dot{\partial}_{i}\Psi)F^{2}=0,$$
  from which $\dot{\partial}_{i}\Psi(x,y)=0$ and $\Psi\neq0$ is a function of $x$ only.
\end{proof}

\begin{remark}
\emph{\textbf{(a)}} The above discussion shows that the proportionality of $F$ and $\overline{F}$ \textbf{does not imply} automatically that the conformal factor $\Psi$ is independent of direction, contrary to what has been mentioned by Y. Shen and Z. Shen in \emph{\cite[(11.1.1)]{Shenbook16}}.\vspace{5pt}\\
\emph{\textbf{(b)}} On the other hand, it was proved by Knebelman \emph{\cite{Knebelman}} that the proportionality of $g_{ij}$ and $\overline{g}_{ij}$ \textbf{does imply} automatically that the conformal factor $\Psi$ is independent of direction, as has been mentioned by H. Rund in \emph{\cite[VI,2]{rund}}.  
\end{remark}

We begin our investigation from the proportionality of $F$ and $\overline{F}$, regardless of the proportionality of $g_{ij}$ and $\overline{g}_{ij}$ and regardless of the independence of the conformal factor of direction. In fact, we are particularly interested in the dependence of the conformal factor on both position and direction, motivated by the current applications in physics and other branches of science, and by the high potentiality of near future applications. 

Now, let us set our definition of conformality.
\begin{definition}
  Let $ (M, F) $ be a conic pseudo-Finsler  manifold.  The anisotropic conformal transformation of  $F $ is defined by
  \begin{equation}
  \label{Eq:Anisotropic_change}
  F\longmapsto\overline{F}(x,y)=e^{\phi(x,y)}F(x,y),
  \end{equation}
    where  $ \phi(x,y)$ is  an $h(0)$ smooth function on $\mathcal{A}$. In this case, we say that $F$ and $\overline{F}$ are anisotropically conformally related, or $\overline{F}$ is anisotropically conformal to $F$.
    
\noindent If the conformal factor $\phi(x,y)$ is independent of direction,  \eqref{Eq:Anisotropic_change} is called isotropic conformal, or, simply, conformal transformation.     
\end{definition} 

 One of the benefits of the anisotropic conformal transformation is that, unlike conformal transformation \cite{Hashiguchi76}, the anisotropic conformal transformation can transform a Riemannian metric to a Finslerian one. 

\begin{proposition}
  Let $(M,F)$ be a conic pseudo-Finsler space, then under the anisotropic conforaml transformation \eqref{Eq:Anisotropic_change}, we have
\begin{equation}\label{g.bar.metric.n.dim}
\overline{g}_{ij}=  e^{2\phi}\left[ g_{ij}+2 F^2 \,\dot{\partial_{i}}\phi \, \dot{\partial_{j}}\phi + 2F (\,{\ell}_{j}\, \dot{\partial_{i}}\phi+    {\ell}_{i}\,\dot{\partial_{j}} \phi)+F^2\, \dot{\partial}_{i}\dot{\partial}_{j}\phi \right].
\end{equation}
\end{proposition}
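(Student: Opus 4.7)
The plan is a direct verification: since $\overline{g}_{ij} := \tfrac{1}{2}\dot\partial_i\dot\partial_j\overline{F}^2$, I would start from $\overline{F}^2 = e^{2\phi}F^2$ and compute two successive $y$-derivatives, using only the Leibniz rule together with the standard identities $\dot\partial_j F^2 = 2F\,\ell_j$ (where $\ell_j = \dot\partial_j F$) and $\tfrac12\dot\partial_i\dot\partial_j F^2 = g_{ij}$.

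Concretely, I would first take a single $\dot\partial_j$ to get
\begin{equation*}
\dot\partial_j\overline{F}^2 \;=\; e^{2\phi}\bigl[\,2F^2\,\dot\partial_j\phi + 2F\,\ell_j\,\bigr].
\end{equation*}
Then I would apply $\dot\partial_i$ to this, expanding the product rule term by term: the exponential contributes $2e^{2\phi}\dot\partial_i\phi$, the factor $F^2$ differentiates to $2F\ell_i$, and the remaining piece $\dot\partial_i\dot\partial_j\phi$ appears from differentiating $\dot\partial_j\phi$. Grouping the resulting six terms and dividing by $2$ should yield exactly the right-hand side of \eqref{g.bar.metric.n.dim}. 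Alternatively, one can organize the calculation as $\dot\partial_i\dot\partial_j(AB)$ with $A = e^{2\phi}$ and $B = F^2$, which collects the four Leibniz contributions in a cleaner way.

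There is no conceptual obstacle here: the derivation is entirely mechanical once one observes that $\ell_j$ and $g_{ij}$ absorb all of the $F$-derivatives, while the anisotropic factor $\phi$ produces the extra terms $\dot\partial_i\phi\,\dot\partial_j\phi$ and $\dot\partial_i\dot\partial_j\phi$ together with the cross-terms with $\ell_i,\ell_j$. The only points that deserve care are (i) keeping track of the factor of $2$ coming from differentiating $e^{2\phi}$ versus the factor of $\tfrac12$ in the definition of $\overline{g}_{ij}$, and (ii) verifying that $\phi$ being $h(0)$ is not needed for the identity itself (it will be relevant later, when one uses homogeneity to derive consequences such as $y^i\dot\partial_i\phi = 0$), so no homogeneity assumption should be invoked in this step.
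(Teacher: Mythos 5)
Your proposal is correct and follows exactly the paper's own proof: a direct two-step differentiation of $\overline{F}^2=e^{2\phi}F^2$, using $\dot\partial_j F^2=2F\ell_j$ and $\tfrac12\dot\partial_i\dot\partial_jF^2=g_{ij}$ to collect the terms. Your side remark that the $h(0)$ homogeneity of $\phi$ is not needed for this particular identity is also accurate.
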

\begin{proof}
  Since $ \overline{F}^{2}=e^{2\phi(x,y)}F^{2}$, we get $\dot{\partial_{i}}\overline{F}^{2}=e^{2\phi}\,(2 \, F^{2}\, \dot{\partial_{i}}\phi+\dot{\partial_{i}}F^{2}).$
 Thereby,
\begin{flalign*}
 \overline{g}_{ij}    = \frac{1}{2} \dot{\partial_{i}} \dot{\partial_{j}}\overline{F}^{2} &=e^{2\phi}[2 F^{2}(\dot{\partial_{i}}\phi)(\dot{\partial_{j}}\phi) + 2F\,(\ell_{j}\, \dot{\partial_{i}}\phi+\ell_{i}\,\dot{\partial_{j}}\phi)+F^{2}\,\dot{\partial}_{i}\dot{\partial}_{j}\phi+\frac{1}{2} \dot{\partial_{i}}\dot{\partial_{j}}F^{2}]\\
     & =e^{2\phi}[ g_{ij}+2 F^{2}(\dot{\partial_{i}}\phi)(\dot{\partial_{j}}\phi) + 2F\,(\ell_{j}\, \dot{\partial_{i}}\phi+\ell_{i}\,\dot{\partial_{j}}\phi)+F^{2}\,\dot{\partial}_{i}\dot{\partial}_{j}\phi],\\
     \text{ where }  \ell_i=\dot{\partial}_iF.
\end{flalign*}
\vspace*{-1.3 cm}\[\qedhere\]
\end{proof}

\begin{remark}
The existence of the term $F^2\dot{\partial}_{i}\dot{\partial}_{j}\phi$ in \eqref{g.bar.metric.n.dim}  makes calculating the inverse metric $\overline{g}^{ij} $, in a tensorial form, very complicated in  general dimension. However, $\overline{g}^{ij}$ can be found out in the two-dimensional case thanks to the existence of Berwald frame in such a case.
 \end{remark} 
 
\textbf{Henceforward, we work in  a two-dimensional conic pseudo-Finsler space equipped with a modified Berwald frame $(\ell_i,m_i)$} \cite{Berwald41,Matsumoto 2003}.  The components $g_{ij}$ of the  metric tensor are given by
  \begin{align}\label{g_ij-2d}
  g_{ij}=\ell_{i}\ell_{j}+\varepsilon m_{i}m_{j}, 
  \end{align}
  where  $\varepsilon=\pm1$ is called the signature of $F$. Further,   the angular metric coefficients $h_{ij}$ can be expressed as 
  \begin{align*}\label{h_ij-2d} 
h_{ij}=\varepsilon m_{i}m_{j}.
\end{align*}
Also, we have 
\begin{equation*}
\label{g^ij-2d}
g^{ij} =\ell^{i}\ell^{j}+\varepsilon m^{i}m^{j},
\end{equation*}
where $\ell^i=\frac{y^i}{F}$. The two vector fields $\ell = (\ell^1,\ell^2)$ and $ m=(m^1 ,m^2)$ have been chosen in such a way that they satisfy $$ g(\ell, \ell)=1, \; g(\ell, m)=0,\;  g(m, m)=\varepsilon. $$  Moreover, the determinant $\mathfrak{g} $ of the matrix $(g_{ij})$ is given by
\begin{equation}
\label{Eq:det(g)}
\mathfrak{g} :=\det(g_{ij})=\varepsilon(\ell_{1}m_{2}-\ell_{2}m_{1})^2.
\end{equation}
 To calculate $m^i $ and    $m_i$, we give the following lemma.
 
 \begin{lemma}
 The covector $m_i$ is given by
 $$m_1=-\sqrt{\varepsilon \mathfrak{g}} \ \ell^2, \quad m_2=\sqrt{\varepsilon \mathfrak{g}} \ \ell^1.$$
 Moreover, the   vector $m^i$ is given by
 $$m^1=-\frac{1}{\sqrt{\varepsilon \mathfrak{g}}}\,\ell_2, \quad m^2=\frac{1}{\sqrt{\varepsilon \mathfrak{g}}}\,\ell_1. $$
 \end{lemma}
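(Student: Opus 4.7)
The plan is to exploit the elementary fact that in dimension two, any nonzero vector orthogonal to a given nonzero one is determined up to a single scalar factor. Since $m^i$ is characterized (together with $\ell^i$) by the two scalar conditions $g(\ell,m)=0$ and $g(m,m)=\varepsilon$, the orthogonality will fix the direction and the normalization will fix the magnitude. The covariant components $m_i$ will then be obtained by lowering indices with $g_{ij}$, using the two-dimensional adjugate identities $g^{11}=g_{22}/\mathfrak{g}$, $g^{22}=g_{11}/\mathfrak{g}$, $g^{12}=-g_{12}/\mathfrak{g}$.

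For the contravariant formula, I would first rewrite $g(\ell,m)=0$ as $\ell_i m^i=0$ (using $g_{ij}\ell^j=\ell_i$), which in two dimensions forces $(m^1,m^2)=k(-\ell_2,\ell_1)$ for some scalar $k$. Substituting into the normalization gives
\[
k^{2}\bigl[g_{11}\ell_2^{\,2}-2g_{12}\ell_1\ell_2+g_{22}\ell_1^{\,2}\bigr]=\varepsilon .
\]
The bracketed quadratic form is exactly $\mathfrak{g}$: this drops out of $g^{ij}\ell_i\ell_j=g(\ell,\ell)=1$ after clearing the denominator $\mathfrak{g}$ via the two-dimensional adjugate formulas. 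Hence $k^{2}=\varepsilon/\mathfrak{g}$, a quantity that is strictly positive because \eqref{Eq:det(g)} gives $\varepsilon\mathfrak{g}=(\ell_1 m_2-\ell_2 m_1)^{2}>0$; taking the positive square root (an orientation choice) yields $k=1/\sqrt{\varepsilon\mathfrak{g}}$ and the stated formulas for $m^1,m^2$.

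For the covariant components, I would either lower indices by $m_i=g_{ij}m^j$ and regroup using $\ell_i=g_{ij}\ell^j$, or, more cleanly, argue directly: the dual orthogonality $\ell^i m_i=0$ (which is equivalent to $\ell_i m^i=0$) forces $(m_1,m_2)=h(-\ell^2,\ell^1)$ for some scalar $h$, and then the determinant identity \eqref{Eq:det(g)} specializes to $\ell_1 m_2-\ell_2 m_1 = h(\ell_1\ell^1+\ell_2\ell^2) = h$, so that $h^2 = \varepsilon\mathfrak{g}$ and $h=\sqrt{\varepsilon\mathfrak{g}}$ with the orientation chosen consistently with $k$. I expect the only real obstacle to be bookkeeping the square roots: one must recognize that $\sqrt{\varepsilon\mathfrak{g}}$ is an unambiguously positive real because $\varepsilon\mathfrak{g}\ge 0$, and must handle the conversion $\mathfrak{g}=\varepsilon(\sqrt{\varepsilon\mathfrak{g}})^{2}$ carefully when reconciling the raising-and-lowering identity $m_i=g_{ij}m^j$ with the two independent parametrizations of $m^i$ and $m_i$. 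Apart from this sign bookkeeping and the orientation convention selecting one of the two admissible frames $(\ell,\pm m)$, the proof is pure two-dimensional linear algebra.
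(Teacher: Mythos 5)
Your proof is correct and follows essentially the same route as the paper: for $m_i$ you solve exactly the paper's two equations, the orthogonality $\ell^i m_i=0$ together with the determinant identity $\ell_1 m_2-\ell_2 m_1=\sqrt{\varepsilon\mathfrak{g}}$, using $\ell_i\ell^i=1$ to fix the scalar. For $m^i$ you substitute the normalization $g(m,m)=\varepsilon$ and the adjugate formulas where the paper merely says \quotes{in a similar manner}; this is a harmless variant that arrives at the same place, and your remark that $\varepsilon\mathfrak{g}=(\ell_1m_2-\ell_2m_1)^2>0$ justifies the square root is a welcome extra detail.
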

 \begin{proof}
 By \eqref{Eq:det(g)}, we have
 $$\ell_{1}m_{2}-\ell_{2}m_{1}=\sqrt{\varepsilon    \mathfrak{g}}.$$
 Also,  the property $g(\ell ,m)=0$  gives
 $$\ell^1m_1+\ell^2m_2=0.$$
Then, the result follows by solving the above  two equations for $m_1$ and $m_2$.  The proof of the formulae of $m^i$ can be calculated in a similar manner. 
 \end{proof}
 
Thus, the components $C_{ijk}$ of the Cartan tensor are given by \cite{Berwald41} 
\begin{equation*}\label{cartan tensor Finsler surface} 
 F C_{ijk}=\mathcal{I}\;m_{i}m_{j}m_{k},
\end{equation*}
where $ \mathcal{I}$ is the  main scalar of the surface $(M,F)$.

\vspace{5pt}
From now on, when we say that $(M,F)$ is a conic pseudo-Finsler surface, this means that $(M,F)$ is a conic pseudo-Finsler surface equipped with the modified Berwald frame.
\begin{lemma}\em\cite{Matsumoto 2003}\label{properties.2.dim.Fins}
  \it Let $(M,F)$ be a conic pseudo-Finsler surface. Then we have the following:
  \begin{description}
    \item[(i)] $\ell^{i}m_{i}=\ell_{i}m^{i}=0$,
     \item[(ii)] $m^{i}m_{i}=\varepsilon$,$\quad \ell^i\ell_i=1$,
     \item[(iii)] $\delta_{i}^{j}=\ell_{i}\ell^{j}+\varepsilon m_{i}m^{j}$,
    \item[(iv)] $F\dot{\partial _{j}}\ell_{i}=\varepsilon m_{i}m_{j}=h_{ij},\qquad F\dot{\partial _{j}}\ell^{i}=\varepsilon m^{i}m_{j} $,
    \item[(v)] $F\dot{\partial _{j}}m_{i}=-(\ell_{i}-\varepsilon \mathcal{I} m_{i})m_{j}$,\qquad$F\dot{\partial _{j}}m^{i}=-(\ell^{i}+\varepsilon \mathcal{I} m^{i})m_{j}$,
     \item[(vi)] For a smooth function $f$ on $\mathcal{A}$, we have $F\dot{\partial _{i}}f= f_{;1}\,\ell_{i} + f_{;2}\,m_{i}$,  where $f_{;1}=F(\dot{\partial _{i}}f) \,\ell^{i}$ and $f_{;2}=\varepsilon F\,(\dot{\partial _{i}}f) m^{i}.$ In particular, if $ f$ is $ h(r) $, then  $f_{;1}=rf$,
    \item[(vii)] $F\dot{\partial _{i}}\partial_{k}f= (\partial_{k}f)_{;1}\,\ell_{i} + (\partial_{k}f)_{;2}\,m_{i}$, where $\partial_k=\frac{\partial}{\partial x^k}$,
    \item[(viii)] $\delta _{i}f= f_{,1}\,\ell_{i} + f_{,2}\,m_{i},$ where $  f_{,1}= (\delta_if)\ell^i$ and  $f_{;2}=\varepsilon (\delta_{i}f)m^{i}$.
\end{description}
\end{lemma}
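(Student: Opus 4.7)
The plan is to dispatch the eight items in order, leveraging only the algebraic data of the modified Berwald frame $(\ell_i,m_i)$ together with the identity $FC_{ijk}=\mathcal{I}\,m_im_jm_k$ for the Cartan tensor.

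Items (i) and (ii) translate the chosen orthonormality relations $g(\ell,\ell)=1$, $g(\ell,m)=0$, $g(m,m)=\varepsilon$ into index form and then lower an index using $g_{ij}$; this gives $\ell^i m_i = \ell_i m^i = 0$, $m^i m_i = \varepsilon$ and $\ell^i \ell_i = 1$ at once. For (iii) I compute $g_{ij}g^{jk}$ by substituting the decompositions $g_{ij}=\ell_i\ell_j+\varepsilon m_im_j$ and $g^{jk}=\ell^j\ell^k+\varepsilon m^j m^k$; the cross terms vanish by (i) and the diagonal terms simplify by (ii), yielding $\delta_i^k = \ell_i\ell^k + \varepsilon m_i m^k$. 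For the first half of (iv), since $\ell_i=\dot{\partial}_iF$, the standard Finslerian identity $g_{ij}=\ell_i\ell_j + F\dot{\partial}_i\dot{\partial}_jF$ gives $F\dot{\partial}_j\ell_i = g_{ij}-\ell_i\ell_j = \varepsilon m_im_j = h_{ij}$; the second half follows from differentiating $\ell^i=y^i/F$ directly and applying (iii) to rewrite $\delta^i_j - \ell^i\ell_j$ in the frame.

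The main obstacle is (v), where the Cartan tensor must enter. My approach is to differentiate the decomposition $g_{ij}=\ell_i\ell_j+\varepsilon m_im_j$ with respect to $y^k$, multiply through by $F$, and substitute $FC_{ijk}=\mathcal{I}\,m_im_jm_k$ together with the already-established formula $F\dot{\partial}_k\ell_i=\varepsilon m_im_k$. This yields one symmetric equation for the unknown tensor $A_{ik}:=F\dot{\partial}_k m_i$, namely
\begin{equation*}
2\mathcal{I}\,m_im_jm_k \;=\; \varepsilon(\ell_j m_i m_k + \ell_i m_j m_k) \;+\; \varepsilon\bigl[A_{ik}m_j + m_i A_{jk}\bigr].
\end{equation*}
To isolate $A_{ik}$ I will exploit the auxiliary constraints obtained by differentiating $\ell^i m_i=0$ and $m^i m_i=\varepsilon$ with respect to $y^k$, combined with the homogeneity $y^k A_{ik}=0$ coming from $m_i$ being an $h(0)$-object. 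Expanding $A_{ik}$ in the co-frame basis $\{\ell_i,m_i\}\otimes\{\ell_k,m_k\}$ and matching coefficients against the symmetric equation above pins down all four components, producing $F\dot{\partial}_k m_i=-(\ell_i-\varepsilon\mathcal{I}\,m_i)m_k$. The vector version $F\dot{\partial}_k m^i$ is then obtained by raising the index through $m^i=g^{ij}m_j$, differentiating the product, and plugging in the covector formula just proved together with $\dot{\partial}_k g^{ij}=-2C^{ij}{}_k$.

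Items (vi)–(viii) are purely linear-algebraic. Since $\{\ell_i,m_i\}$ forms a basis of the covector space at each point of $\mathcal{A}$, any covector, in particular $F\dot{\partial}_i f$, $F\dot{\partial}_i\partial_k f$, and $\delta_i f$, admits a unique expansion of the form $\alpha\,\ell_i+\beta\,m_i$. Contracting successively with $\ell^i$ and with $\varepsilon m^i$ and applying (i)–(ii) recovers the coefficients, which are by definition the scalars $f_{;1}, f_{;2}, (\partial_k f)_{;1}, (\partial_k f)_{;2}, f_{,1}, f_{,2}$. The last sentence of (vi) is simply Euler's identity $y^i\dot{\partial}_i f=rf$ for an $h(r)$-function, applied after multiplication by $F$ and using $y^i=F\ell^i$.
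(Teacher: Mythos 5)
Your proposal is correct in all eight items. Note first that the paper offers no proof of this lemma at all --- it is quoted from Matsumoto's handbook article --- so there is no internal argument to compare against; what you have written is a genuine, self-contained derivation from the frame axioms $g(\ell,\ell)=1$, $g(\ell,m)=0$, $g(m,m)=\varepsilon$, the decompositions $g_{ij}=\ell_i\ell_j+\varepsilon m_im_j$, $g^{ij}=\ell^i\ell^j+\varepsilon m^im^j$, and $FC_{ijk}=\mathcal{I}\,m_im_jm_k$. Items (i)--(iv) and (vi)--(viii) are routine exactly as you describe (for (iv) one only needs $g_{ij}=\ell_i\ell_j+F\dot{\partial}_i\dot{\partial}_jF$ and, for the vector half, $\dot{\partial}_j(y^i/F)=(\delta^i_j-\ell^i\ell_j)/F$ together with (iii)). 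For (v), your scheme does pin down all four components of $A_{ik}=F\dot{\partial}_km_i=a\ell_i\ell_k+b\ell_im_k+cm_i\ell_k+dm_im_k$: homogeneity ($\ell^kA_{ik}=0$) forces $a=c=0$; differentiating $\ell^im_i=0$ and using the already-proved $F\dot{\partial}_k\ell^i=\varepsilon m^im_k$ gives $\ell^iA_{ik}=-m_k$, hence $b=-1$; and substituting back into your symmetric Cartan equation leaves $2\varepsilon d\,m_im_jm_k=2\mathcal{I}\,m_im_jm_k$, so $d=\varepsilon\mathcal{I}$. One small caveat: the auxiliary constraint you mention from differentiating $m^im_i=\varepsilon$ involves the second unknown $\dot{\partial}_km^i$ (equivalently $\dot{\partial}_kg^{ij}$), so it cannot be used to determine $A_{ik}$ and serves only as a consistency check; the three constraints actually doing the work are homogeneity, $\ell^im_i=0$, and the Cartan equation. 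The final passage to $F\dot{\partial}_km^i$ via $m^i=g^{ij}m_j$, $\dot{\partial}_kg^{ij}=-2C^{ij}{}_k$ and $FC^{ij}{}_k=\mathcal{I}\,m^im^jm_k$ correctly produces $-(\ell^i+\varepsilon\mathcal{I}m^i)m_k$, the sign flip on the $\mathcal{I}$-term coming from the extra $-2\varepsilon\mathcal{I}m^im_k$ contributed by the derivative of the inverse metric.
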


Using the modified  Berwald frame, we can write the formulae of the objects $\dot{\partial}_{i}\phi$ and  $\dot{\partial}_{i}\dot{\partial}_{j}\phi$ in terms of the  frame elements.  Consequently,  plugging these objects into  \eqref{g.bar.metric.n.dim},  we are able to  find the formula of components $\overline{g}^{ij}$ of the inverse metric of $\overline{F}$.  More precisely, we have:

\begin{lemma}\label{tensor of second direviative of phi}
   Let $(M,F)$ be a  conic pseudo-Finsler surface. The term  $\dot{\partial_{i}}\dot{\partial_{j}}\phi$ given in \eqref{g.bar.metric.n.dim} can be expressed in the form 
   \begin{align}\label{F^2.phi_ij-2d}
   F^{2}\dot{\partial}_{i}\dot{\partial}_{j}\phi =-\phi_{;2}\,(\ell_{i}m_{j}+\ell_{j}m_{i})+( \phi_{;2;2}+\varepsilon\, \mathcal{I}\,\phi_{;2})\,m_{j}m_{i}.
   \end{align}
\end{lemma}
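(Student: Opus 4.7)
The plan is to differentiate the identity from Lemma \ref{properties.2.dim.Fins}(vi) once more in the vertical direction and expand the result using items (v), (vi) again. Since $\phi$ is $h(0)$, item (vi) gives $\phi_{;1} = 0\cdot \phi = 0$, so that
\[
F\,\dot{\partial}_i\phi = \phi_{;2}\,m_i.
\]
This is the starting identity. The next observation I would record is that $\phi_{;2}$ is itself $h(0)$: indeed $\phi_{;2} = \varepsilon F(\dot{\partial}_k\phi)m^k$ is a product of an $h(1)$, an $h(-1)$ and an $h(0)$ quantity. Hence $(\phi_{;2})_{;1}=0$, and applying (vi) to $\phi_{;2}$ yields the clean formula $F\,\dot{\partial}_j\phi_{;2} = \phi_{;2;2}\,m_j$.

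Next I would apply $\dot{\partial}_j$ to both sides of $F\,\dot{\partial}_i\phi = \phi_{;2}\,m_i$, using $\dot{\partial}_j F = \ell_j$, obtaining
\[
\ell_j(\dot{\partial}_i\phi) + F\,\dot{\partial}_i\dot{\partial}_j\phi = (\dot{\partial}_j\phi_{;2})\,m_i + \phi_{;2}\,\dot{\partial}_j m_i .
\]
Multiplying through by $F$ and substituting $F\,\dot{\partial}_i\phi = \phi_{;2}m_i$, $F\,\dot{\partial}_j\phi_{;2}=\phi_{;2;2}\,m_j$ from the previous step, and $F\,\dot{\partial}_j m_i = -(\ell_i-\varepsilon\,\mathcal{I}\,m_i)m_j$ from Lemma \ref{properties.2.dim.Fins}(v), gives
\[
F^2\,\dot{\partial}_i\dot{\partial}_j\phi = \phi_{;2;2}\,m_i m_j - \phi_{;2}(\ell_i-\varepsilon\mathcal{I}\,m_i)m_j - \phi_{;2}\,\ell_j m_i .
\]
Collecting the two terms proportional to $\ell_{(i}m_{j)}$ and the two terms proportional to $m_i m_j$ yields exactly the claimed expression \eqref{F^2.phi_ij-2d}.

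There is no real obstacle here beyond the bookkeeping; the whole argument reduces to the two Berwald-frame identities (v) and (vi) together with the homogeneity-induced vanishing $\phi_{;1}=(\phi_{;2})_{;1}=0$. The only point where a slip could occur is symmetry of the expression in $i$ and $j$: the right-hand side must be symmetric, and this is guaranteed because the $-\phi_{;2}\ell_i m_j$ (from $F\,\dot{\partial}_j m_i$) and $-\phi_{;2}\ell_j m_i$ (from the $\ell_j\dot{\partial}_i\phi$ term transferred to the other side) combine to the symmetric expression $-\phi_{;2}(\ell_i m_j+\ell_j m_i)$, which I would verify explicitly as a sanity check.
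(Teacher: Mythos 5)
Your proof is correct and follows essentially the same route as the paper's: both start from $F\dot{\partial}_i\phi=\phi_{;2}m_i$ (a consequence of $\phi$ being $h(0)$), differentiate vertically once more, and invoke parts (v) and (vi) of Lemma \ref{properties.2.dim.Fins} to land on \eqref{F^2.phi_ij-2d}. The only cosmetic difference is that the paper multiplies by $F$ before differentiating while you multiply afterwards, and your explicit note that $\phi_{;2}$ is itself $h(0)$ (so $F\dot{\partial}_j\phi_{;2}=\phi_{;2;2}m_j$) makes visible a point the paper defers to Remark \ref{remark phi and rho }.
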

\begin{proof}
From Lemma \ref{properties.2.dim.Fins} $\textbf{(vi)} ,$  
$F\dot{\partial}_{i}\phi=\phi_{;1}\;\ell_{i}+\phi_{;2}\,m_{i}.$ Since $\phi$ is $h(0)$, then $\phi_{;1}=0$. That is,
\begin{gather}\label{phi_i.2.dim}
   F\dot{\partial}_{i}\phi=\phi_{;2}\;m_{i}.
\end{gather}
Multiplying both sides of \eqref{phi_i.2.dim} by $F$, then differentiating with respect to $y^{j}$ and use Lemma \ref{properties.2.dim.Fins} \textbf{(v)}  and \textbf{(vi)}, we get
\begin{equation*}
  F^{2}\dot{\partial}_{i}\dot{\partial}_{j}\phi=-\phi_{;2}\,(\ell_{i}m_{j}+\ell_{i}m_{j})+( \phi_{;2;2}+\varepsilon\, \mathcal{I}\,\phi_{;2})\,m_{j}m_{i}.
\end{equation*}
\vspace*{-1.3cm}\[\qedhere\]
\end{proof}

\begin{proposition}\label{lemma of g_ij}
  Let $(M,F)$ be a conic pseudo-Finsler surface, then under the anisotropic conformal transformation \eqref{Eq:Anisotropic_change},    the   metric tensors of $\overline{F}$ and $F$ are related by
  \begin{align}\label{2.form.g_ij.bar.2.dim}
     \overline{g}_{ij} & =e^{2\phi}[ g_{ij}+\phi_{;2}(\ell_{i}m_{j}+\ell_{j}m_{i})+\sigma m_{i}m_{j}]\nonumber \\
       & = e^{2\phi}[ \ell_{i}\ell_{j}+\phi_{;2}(\ell_{i}m_{j}+\ell_{j}m_{i})+(\sigma+\varepsilon) m_{i}m_{j}],
\end{align}
where $\sigma=\phi_{;2;2}+\varepsilon \mathcal{I}\phi_{;2}+2\,(\phi_{;2})^{2}$.
\end{proposition}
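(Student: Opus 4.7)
The plan is to specialize the general $n$-dimensional formula \eqref{g.bar.metric.n.dim} to the two-dimensional setting by replacing every occurrence of $\dot{\partial}_i\phi$ and $\dot{\partial}_i\dot{\partial}_j\phi$ with its expansion in the modified Berwald frame, and then to collect terms along the three independent symmetric tensors $\ell_i\ell_j$, $\ell_im_j+\ell_jm_i$, and $m_im_j$.

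First I would recall from the proof of Lemma \ref{tensor of second direviative of phi} the identity $F\dot{\partial}_i\phi=\phi_{;2}\,m_i$, valid because $\phi$ is $h(0)$ so $\phi_{;1}=0$. This immediately yields the two middle terms in \eqref{g.bar.metric.n.dim}:
\begin{align*}
2F^2(\dot{\partial}_i\phi)(\dot{\partial}_j\phi) &= 2(\phi_{;2})^2\,m_im_j,\\
2F\bigl(\ell_j\,\dot{\partial}_i\phi+\ell_i\,\dot{\partial}_j\phi\bigr) &= 2\phi_{;2}\,(\ell_im_j+\ell_jm_i).
\end{align*}
Then I would substitute the expression \eqref{F^2.phi_ij-2d} from Lemma \ref{tensor of second direviative of phi} for the last term, $F^2\dot{\partial}_i\dot{\partial}_j\phi$, which contributes an extra $-\phi_{;2}(\ell_im_j+\ell_jm_i)+(\phi_{;2;2}+\varepsilon\mathcal{I}\phi_{;2})\,m_im_j$.

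Adding the three contributions, the coefficient of $\ell_im_j+\ell_jm_i$ becomes $2\phi_{;2}-\phi_{;2}=\phi_{;2}$, while the coefficient of $m_im_j$ becomes $2(\phi_{;2})^2+\phi_{;2;2}+\varepsilon\mathcal{I}\phi_{;2}=\sigma$. Combined with the leading $g_{ij}$ inside the bracket, this produces the first line of \eqref{2.form.g_ij.bar.2.dim}. The second line is then obtained by invoking the surface decomposition \eqref{g_ij-2d}, namely $g_{ij}=\ell_i\ell_j+\varepsilon\,m_im_j$, which merges the term $\varepsilon m_im_j$ into the $m_im_j$ coefficient to give $(\sigma+\varepsilon)$.

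The computation is essentially bookkeeping, so the only mild subtlety is making sure to track the sign of the $\ell_im_j+\ell_jm_i$ contribution coming from $F^2\dot{\partial}_i\dot{\partial}_j\phi$: it subtracts exactly one copy of $\phi_{;2}$ from the $2\phi_{;2}$ produced by the cross term, leaving a single $\phi_{;2}$. No serious obstacle is expected beyond this coefficient matching, since all heavy lifting has already been done in establishing \eqref{g.bar.metric.n.dim} and Lemma \ref{tensor of second direviative of phi}.
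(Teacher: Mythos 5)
Your proposal is correct and follows exactly the route the paper takes: its proof is the one-line statement that the result ``follows from \eqref{g.bar.metric.n.dim}, \eqref{g_ij-2d}, \eqref{F^2.phi_ij-2d} and \eqref{phi_i.2.dim}'', and your computation simply spells out that substitution, with the coefficient bookkeeping ($2\phi_{;2}-\phi_{;2}=\phi_{;2}$ and $2(\phi_{;2})^{2}+\phi_{;2;2}+\varepsilon\mathcal{I}\phi_{;2}=\sigma$) done correctly. Nothing is missing.
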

\begin{proof}
  It follows from \eqref{g.bar.metric.n.dim}, \eqref{g_ij-2d}, \eqref{F^2.phi_ij-2d} and \eqref{phi_i.2.dim}.
\end{proof}

It should be noted that the anisotropic conformal transformation of a conic pseudo-Finsler surface $(M,F)$ does not yield in general a conic pseudo-Finsler surface, as shown in Example \ref{example of non pseudo finsler Fbar} below.
The following result gives a necessary and sufficient condition for $(M,\overline{F})$ to be a conic pseudo-Finsler surface. 
 
\begin{theorem}\label{ness.suff.codtion.F.bar.Finsler}
Let $(M,F)$ be a conic pseudo-Finsler surface and  \eqref{Eq:Anisotropic_change} be an anisotropic conformal transformation of $F$.    Then, $(M,\overline{F})$ is a conic pseudo-Finsler surface  if and only if $$\sigma-(\phi_{;2})^{2}+\varepsilon=F^{2}[\dot{\partial}_{i}\dot{\partial}_{j}\phi
 +(\dot{\partial}_{i}\phi)(\dot{\partial}_{j}\phi)] m^{i}m^{j}+\varepsilon\neq0.$$
\end{theorem}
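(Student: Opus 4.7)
The plan is to show that $(M,\overline F)$ is a conic pseudo-Finsler surface if and only if $\det(\overline g_{ij})\neq 0$, compute this determinant in closed form, and then verify the equivalent expression involving $m^i m^j$.

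First I would rewrite the expression for $\overline g_{ij}$ from Proposition \ref{lemma of g_ij} by completing the square in the $\ell_i,m_i$ decomposition:
\begin{equation*}
\overline g_{ij}=e^{2\phi}\bigl[(\ell_i+\phi_{;2}m_i)(\ell_j+\phi_{;2}m_j)+(\sigma+\varepsilon-(\phi_{;2})^2)\,m_im_j\bigr].
\end{equation*}
This displays $\overline g_{ij}$ as a sum of two rank-one symmetric $2\times 2$ tensors, $\overline g_{ij}=e^{2\phi}(a_ia_j+\lambda\,m_im_j)$, with $a_i:=\ell_i+\phi_{;2}m_i$ and $\lambda:=\sigma+\varepsilon-(\phi_{;2})^2$. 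In two dimensions the determinant of such a sum is $\lambda(a_1m_2-a_2m_1)^2$, and since the $m_i$-contribution in $a_i$ drops out of the antisymmetric combination, $a_1m_2-a_2m_1=\ell_1m_2-\ell_2m_1$. Invoking \eqref{Eq:det(g)} gives $(\ell_1m_2-\ell_2m_1)^2=\varepsilon\mathfrak g$, and hence
\begin{equation*}
\det(\overline g_{ij})=e^{4\phi}\,\varepsilon\,\mathfrak g\,\bigl(\sigma+\varepsilon-(\phi_{;2})^2\bigr).
\end{equation*}
Since $\mathfrak g\neq 0$ by assumption on $F$, non-degeneracy of $\overline g_{ij}$ is equivalent to $\sigma-(\phi_{;2})^2+\varepsilon\neq 0$.

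Finally, I would verify the second form of the condition by contracting with $m^im^j$. Using $F\dot{\partial}_i\phi=\phi_{;2}\,m_i$ from \eqref{phi_i.2.dim} together with the expression for $F^2\dot{\partial}_i\dot{\partial}_j\phi$ supplied by Lemma \ref{tensor of second direviative of phi}, and the orthogonality relations $m^i\ell_i=0$, $m^im_i=\varepsilon$ of Lemma \ref{properties.2.dim.Fins}, I would compute
\begin{equation*}
F^2\bigl[\dot{\partial}_i\dot{\partial}_j\phi+(\dot{\partial}_i\phi)(\dot{\partial}_j\phi)\bigr]m^im^j=\phi_{;2;2}+\varepsilon\,\mathcal{I}\,\phi_{;2}+(\phi_{;2})^2,
\end{equation*}
which equals $\sigma-(\phi_{;2})^2$ by the definition of $\sigma$ in Proposition \ref{lemma of g_ij}. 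Adding $\varepsilon$ produces the stated identity and completes the proof.

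The main subtlety is the algebraic trick of completing the square to reduce $\overline g_{ij}$ to a sum of two rank-one pieces; everything else is a determinant computation and a routine $m^im^j$-contraction using the Berwald-frame identities already proved. No genuine obstacle is expected, since the explicit $2$-dimensional Berwald frame makes both the determinant and the contraction elementary.
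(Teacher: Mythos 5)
Your proof is correct and reaches the same determinant identity $\det(\overline g_{ij})=\varepsilon e^{4\phi}\bigl[\sigma+\varepsilon-(\phi_{;2})^2\bigr]\det(g_{ij})$ as the paper, but by a cleaner route: the paper expands the $2\times 2$ determinant of \eqref{2.form.g_ij.bar.2.dim} by brute force and regroups terms into $(\ell_1m_2-\ell_2m_1)^2$, whereas you first complete the square to exhibit $\overline g_{ij}$ as $e^{2\phi}(a_ia_j+\lambda m_im_j)$ with $a_i=\ell_i+\phi_{;2}m_i$ and then use the rank-one determinant formula together with the observation that $a_1m_2-a_2m_1=\ell_1m_2-\ell_2m_1$. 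This buys a computation with no cancellation to track and makes it transparent why only the combination $\sigma+\varepsilon-(\phi_{;2})^2$ survives. You also explicitly verify, by the $m^im^j$-contraction, that $\sigma-(\phi_{;2})^2=F^2[\dot{\partial}_i\dot{\partial}_j\phi+(\dot{\partial}_i\phi)(\dot{\partial}_j\phi)]m^im^j$; the paper states this equality without proof, so your check is a worthwhile addition. The only point you pass over quickly, which the paper does record, is that smoothness and positive $1$-homogeneity of $\overline F=e^{\phi}F$ on $\mathcal{A}$ are automatic (since $\phi$ is smooth and $h(0)$), so that non-degeneracy of $(\overline g_{ij})$ is indeed the only condition in Definition \ref{pseudo-Finsler def} left to test; one sentence to that effect would make the reduction to $\det(\overline g_{ij})\neq 0$ airtight.
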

\begin{proof}
 Since both  $F(x,y)$ and  $e^{\phi(x,y)}$ are smooth on $ \mathcal{A}$,  then  $\overline{F}(x,y)$ is smooth on $ \mathcal{A}$. Moreover,   $\overline{F}(x,y)$ is $h(1)$. Now, from \eqref{2.form.g_ij.bar.2.dim}, we get

  \small{$$ \det(\overline{g}_{ij})=e^{4\phi}\begin{vmatrix}
                          \ell_{1}\ell_{1}+2\phi_{;2}\, \ell_{1}m_{1}+(\sigma+\varepsilon)\, m_{1}m_{1} & \ell_{1}\ell_{2}+\phi_{;2} (\ell_{1}m_{2}+ \ell_{2}m_{1}) +(\sigma+\varepsilon) m_{1}m_{2} \\
                          \ell_{1}\ell_{2}+\phi_{;2}(\ell_{1}m_{2}+ \ell_{2}m_{1}) +(\sigma+\varepsilon) m_{1}m_{2} & \ell_{2}\ell_{2}+2\phi_{;2} \,\ell_{2}m_{2}+(\sigma+\varepsilon)\, m_{2}m_{2} 
                        \end{vmatrix}.$$} 
That is, by using \eqref{Eq:det(g)}, we have
\begin{eqnarray}
    \det(\overline{g}_{ij}) &  =&e^{4\phi}[(\sigma+\varepsilon) \ell_{1}\ell_{1} m_{2}m_{2}+(\sigma+\varepsilon) \ell_{2}\ell_{2} m_{1}m_{1}-2(\sigma+\varepsilon) \ell_{1}m_{1}\ell_{2}m_{2} \nonumber \\
    &  &\quad \,\, -(\phi_{;2})^{2} \ell_{1}\ell_{1} m_{2}m_{2}-(\phi_{;2})^{2}  \ell_{2}\ell_{2} m_{1}m_{1}+2(\phi_{;2})^{2}\ell_{1}m_{1} \ell_{2}m_{2}]\nonumber \\
     & =& e^{4\phi}[\sigma+\varepsilon-(\phi_{;2})^{2}](\ell_{1}m_{2}-\ell_{2}m_{1})^{2}\nonumber\\
   &=&\varepsilon e^{4\phi}[\sigma-(\phi_{;2})^{2}+\varepsilon]\det(g_{ij}).\nonumber
 \end{eqnarray}
 Therefore, the matrix  $(\overline{g}_{ij})$ is non-degenerate if and only if
$$ \sigma-(\phi_{;2})^{2}+\varepsilon=F^{2}[\dot{\partial}_{i}\dot{\partial}_{j}\phi\
 +(\dot{\partial}_{i}\phi)(\dot{\partial}_{j}\phi)] m^{i}m^{j}+\varepsilon\neq0.$$
 \vspace*{-1.3cm}\[\qedhere\]
 \end{proof}

\begin{example}\label{example of non pseudo finsler Fbar}
This example gives a conic pseudo-Finsler surface $F$ whose anisotropic conformal transformation $\overline{F}=e^{\phi} F$ is not a conic pseudo-Finsler surface for a certain smooth $h(0)$ function~ $\phi$. Let $F(x,y)=\dfrac{\sqrt{|y|^2-(|x|^2|y|^2-\langle x, y\rangle^2)}}{1-|x|^2}$ be the Klein metric on unit ball $B^n\subset\mathbb{R}^{n}$. Define
$$\overline{ F}(x,y)= e^{\phi (x,y)} F(x,y)=\dfrac{\langle x, y\rangle}{1-|x|^2}, $$
where $\phi (x,y)=\ln \left(\dfrac{\langle x, y\rangle}{\sqrt{|y|^2-\left(|x|^2|y|^2-\langle x, y\rangle^2\right)}} \right).$\\
It is clear that $\overline{F}$ is linear function in $y$, i.e.,   $\det(\overline{g}_{ij})=0$. That is,  $\sigma+\varepsilon-(\phi_{;2})^{2}=0.$ Hence, $\overline{F}$ is not a conic pseudo-Finsler metric, by Theorem \emph{\ref{ness.suff.codtion.F.bar.Finsler}}.
\end{example}

\begin{proposition}\label{prop of g^ij}
Let\; $ (M,F)$ be a  conic pseudo-Finsler surface and \eqref{Eq:Anisotropic_change} be an anisotropic conformal transformation of $F$. Then, we have
\begin{align}\label{g^ij.bar.2.dim}
  \begin{split}
     \overline{g}^{ij}  & =e^{-2\phi}[\varepsilon \rho g^{ij}+\sigma\rho\ell^{i}\ell^{j}
-\varepsilon\phi_{;2}\; \rho(\ell^{i}m^{j}+\ell^{j}m^{i})]\\
& =e^{-2\phi}[g^{ij}+(\phi_{;2})^{2}\rho\ell^{i}\ell^{j}
-\varepsilon\phi_{;2}\;\rho(\ell^{i}m^{j}+\ell^{j}m^{i})+(\rho-\varepsilon) m^{i}m^{j}],
  \end{split}
\end{align}
where
$\rho:=\dfrac{1}{\sigma+\varepsilon-(\phi_{;2})^{2}}.$
\end{proposition}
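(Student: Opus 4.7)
The plan is to exploit the fact that in the two-dimensional setting the modified Berwald frame $(\ell^i, m^i)$ spans the tangent space at every point, so any symmetric $(2,0)$-tensor admits a unique expansion in the basis $\ell^i\ell^j,\ \ell^i m^j + \ell^j m^i,\ m^i m^j$. Accordingly, I would start with the ansatz
\begin{equation*}
\overline{g}^{ij} = e^{-2\phi}\bigl[A\,\ell^i\ell^j + B\,(\ell^i m^j + \ell^j m^i) + C\,m^i m^j\bigr],
\end{equation*}
where $A$, $B$, $C$ are smooth $h(0)$ scalars to be determined.

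To pin down the coefficients I would impose $\overline{g}_{ij}\overline{g}^{jk} = \delta_i^k$, using the expression \eqref{2.form.g_ij.bar.2.dim} for $\overline{g}_{ij}$ together with the standard frame identities from Lemma \ref{properties.2.dim.Fins}: $\ell_j \ell^j = 1$, $m_j m^j = \varepsilon$, $\ell^j m_j = \ell_j m^j = 0$, and $\delta_i^k = \ell_i \ell^k + \varepsilon\, m_i m^k$. Expanding the product and collecting the four independent rank-one tensors $\ell_i \ell^k$, $\ell_i m^k$, $m_i \ell^k$, $m_i m^k$ produces the linear system
\begin{align*}
A + \varepsilon\phi_{;2} B &= 1, &\qquad B + \varepsilon\phi_{;2} C &= 0,\\
\phi_{;2} A + \varepsilon(\sigma+\varepsilon) B &= 0, &\qquad \phi_{;2} B + \varepsilon(\sigma+\varepsilon) C &= \varepsilon.
\end{align*}
The second and fourth relations give immediately $C = \rho$ and $B = -\varepsilon\phi_{;2}\rho$, and then the first forces $A = 1 + (\phi_{;2})^2\rho$; the third equation is automatically satisfied thanks to the defining identity $\rho\bigl[\sigma+\varepsilon-(\phi_{;2})^2\bigr] = 1$, which serves as a built-in consistency check.

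Substituting these values into the ansatz yields the second form displayed in \eqref{g^ij.bar.2.dim} once one rewrites the $\ell^i\ell^j$-term as $g^{ij} - \varepsilon\, m^i m^j$. The first form of \eqref{g^ij.bar.2.dim} follows from the second by the symmetric substitution $m^i m^j = \varepsilon(g^{ij} - \ell^i\ell^j)$ together with the identity $\rho(\sigma+\varepsilon) = 1 + (\phi_{;2})^2\rho$.

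\emph{Expected obstacle.} The calculation is entirely algebraic, so there is no serious analytic difficulty; the only delicate issue is ensuring the linear system is solvable, which amounts to $\sigma + \varepsilon - (\phi_{;2})^2 \neq 0$. This is precisely the non-degeneracy condition of Theorem \ref{ness.suff.codtion.F.bar.Finsler}, automatically available whenever $\overline{g}^{ij}$ is defined. The main source of potential error is sign-bookkeeping of the factor $\varepsilon = \pm 1$ throughout the four relations, so some care is required in verifying the third equation (the redundant one) to make sure the derived $A, B, C$ are genuinely consistent.
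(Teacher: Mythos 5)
Your proposal is correct and follows essentially the same route as the paper: expand $\overline{g}^{ij}$ in the frame basis, impose $\overline{g}^{ir}\overline{g}_{rj}=\delta^i_j$ using Lemma \ref{properties.2.dim.Fins}, and solve the resulting linear system, whose solution $A=1+(\phi_{;2})^2\rho=\rho(\sigma+\varepsilon)$, $B=-\varepsilon\phi_{;2}\rho$, $C=\rho$ matches the paper's $A_1,A_2=A_3,A_4$. The only cosmetic difference is that you build the symmetry of $\overline{g}^{ij}$ into the ansatz from the start (three unknowns instead of four), which the paper instead recovers as the output $A_2=A_3$ of its system.
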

\begin{proof}
   In a conic pseudo-Finsler surface $(M,F)$, a tensor of type (2,0) can be written as $ T^{ij}=A_{1}\ell^{i}\ell^{j}+A_{2}\ell^{i}m^{j}+A_{3}m^{i}\ell^{j}+A_{4}m^{i}m^{j},$ where $A_{1}, \;A_{2},\; A_{3} \text{ and }  A_{4}$ are smooth function on $\mathcal{A}$.
 Consequently,  the inverse of the metric tensor $\overline{g}_{ij}$, given by \eqref{2.form.g_ij.bar.2.dim}, can be written as
 \begin{equation}\label{to-dete-A-i}
   \overline{g}^{ij}=e^{-2\phi}[A_{1}\ell^{i}\ell^{j}+A_{2}\ell^{i}m^{j}+A_{3}m^{i}\ell^{j}+A_{4}m^{i}m^{j}].
 \end{equation}
To determine the functions $A_{1},\; A_{2},\; A_{3} \text{ and }  A_{4}$, we calculate
 $$ \overline{g}^{ir}\overline{g}_{rj} =\delta_{j}^{i}=[A_{1}\ell^{i}\ell^{r}+A_{2}\ell^{i}m^{r}+A_{3}m^{i}\ell^{r}+A_{4}m^{i}m^{r}][ \ell_{r}\ell_{j}+\phi_{;2}(\ell_{r}m_{j}+\ell_{j}m_{r})+(\sigma+\varepsilon) m_{r}m_{j}]. $$
Using Lemma \ref{properties.2.dim.Fins} (\textbf{(i)-(iii)}), we get
\begin{align}\label{constant-g^{ij}}
  0= &  (A_{1}+\varepsilon\phi_{;2} \;A_{2}-1)\ell_{j}\ell^{i}+(A_{3}+\varepsilon\phi_{;2}\; A_{4})\ell_{j}m^{i}+(\phi_{;2}\;A_{1}+\varepsilon A_{2}(\sigma+\varepsilon))m_{j}\ell^{i}\nonumber \\
  &  +(\phi_{;2}\; A_{3}+\varepsilon A_{4}(\sigma+\varepsilon)-\varepsilon) m_{j}m^{i}.
\end{align}
 Contracting \eqref{constant-g^{ij}} by $\ell^{i}\ell_{j}, m^{i}m_{j}, \ell^{i}m_{j} \text{ and } m^{i}\ell_{j}, $ respectively, we get the following equations:
\begin{equation}\label{system-cons}
A_{1}+\varepsilon\phi A_{2}   =1, \quad
    A_{3}+\varepsilon\phi_{;2}\; A_{4}  =0, \quad
    \phi_{;2}\;A_{1}+\varepsilon(\sigma+\varepsilon) A_{2}   =0, \quad
    \phi_{;2}\; A_{3}+\varepsilon(\sigma+\varepsilon) A_{4}  =\varepsilon.
\end{equation}
 Solving the system  \eqref{system-cons} yields
 \begin{equation*}
  A_{1}=\frac{\sigma +\varepsilon}{\sigma +\varepsilon-(\phi_{;2})^{2}},\hspace{1.5cm} A_{2}=A_{3}=\frac{ -\varepsilon\phi_{;2}}{\sigma +\varepsilon-(\phi_{;2})^{2}},\hspace{1.5cm}A_{4}=\frac{1}{\sigma +\varepsilon-(\phi_{;2})^{2}}.
 \end{equation*}
Substituting  $A_{1},\; A_{2},\; A_{3} \text{ and }  A_{4}$ into \eqref{to-dete-A-i}, noting that $\rho: =\frac{1}{\sigma +\varepsilon-(\phi_{;2})^{2}}$, we obtain
\begin{equation*}
\overline{g}^{ij}=e^{-2\phi}[\rho(\sigma+\varepsilon)\ell^{i}\ell^{j}
-\varepsilon\phi_{;2}\;\rho(\ell^{i}m^{j}+\ell^{j}m^{i})+\rho m^{i}m^{j}].
\end{equation*}
By \eqref{g_ij-2d},  we get
$$\overline{g}^{ij}=e^{-2\phi}[\varepsilon \rho g^{ij}+\sigma\rho\ell^{i}\ell^{j}
-\varepsilon\phi_{;2} \;\rho(\ell^{i}m^{j}+\ell^{j}m^{i})].$$
\vspace*{-1.1cm}\[\qedhere\]
\end{proof}

Let us end this section by listing some properties of the functions $\phi$,  $\rho $, $\sigma$ and $\mathcal{I}$ for subsequent use. 

\begin{remark}\label{remark phi and rho }
As\; $\phi_{;2},\; \sigma,\;\rho$ and $\mathcal{I}$ are $h(0)$-functions, the following relations hold:
  \begin{description}
    \item[(a)]$\sigma=\phi_{;2;2}+\varepsilon \mathcal{I}\phi_{;2}+2(\phi_{;2})^{2}$,\quad $\rho=\dfrac{1}{\sigma+\varepsilon-(\phi_{;2})^{2}}=\dfrac{1}{\varepsilon+\phi_{;2;2}+\varepsilon \mathcal{I}\phi_{;2}+(\phi_{;2})^{2}}$.
    \item[(b)] $F\dot{\partial}_{i}\phi_{;2}=\phi_{;2;2}\;m_{i}$,\quad $ F\dot{\partial}_{i}\sigma=\sigma_{;2}m_{i},\quad  F\dot{\partial}_{i}\rho=\rho_{;2}\,m_{i}$, \quad   
    $ F\dot{\partial}_{i}\mathcal{I}=\mathcal{I}_{;2}\;m_{i}$.  
  \end{description}
\end{remark}


\section{Transformation of fundamental geometric objects }

 From now on, we consider the anisotropic conformal transformation of  conic pseudo-Finsler metric $F$ given by
 \begin{equation}\label{the anisotropic conformal transformation2}
 \overline{F}(x,y)=e^{\phi(x,y)}F(x,y), \quad\text{with}\,\,\, \sigma+\varepsilon-(\phi_{;2})^{2}\neq0.
 \end{equation}
 
We calculate the anisotropic conformal change \eqref{the anisotropic conformal transformation2} of some  important geometric objects. Namely, we find the transformation of modified Berwald frame, angular metric, Cartan tensor, main scalar, geodesic spray, Barthel connection and  Berwald connection.

\begin{proposition}\label{tras-of.l_i.l^j.c_ijk}
  Under the anisotropic conformal transformation \eqref{the anisotropic conformal transformation2}, we obtain
  \begin{description}
    \item[(i)] $\overline{\ell}_{i}=e^{\phi}[\ell_{i}+\phi_{;2}\; m_{i}],$\;\;\;\;\;$\overline{\ell}^i=e^{-\phi}\ell^{i},$
    \item[(ii)]$ \overline{h} _{ij}=e^{2\phi}[h_{ij}+(\sigma-(\phi_{;2})^{2}) m_{i}m_{j}],$
    \item[(iii)] $ \overline{m}_{i}=e^{\phi}\sqrt{\frac{\varepsilon}{\rho}}\;m_{i},$\;\;\;\;\;$\overline{m}^{j}=e^{-\phi}\sqrt{\varepsilon\rho}[m^{j}-\varepsilon \phi_{;2}\;\ell^{j}],$
    \item[(iv)] $\overline{m}_{i}\overline{m}^{i}=\varepsilon,$\;\;\;\;\;$\overline{\ell}_{i}\overline{\ell}^{i}=1,$\;\;\;\;\;$ \overline{\ell}^{i} \overline{m}_{i}=\overline{\ell}_{i}\overline{m}^{i}=0.$
  \end{description}
\end{proposition}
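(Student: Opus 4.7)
The plan is to peel off the four claims in order: first compute $\overline{\ell}_i$ and $\overline{\ell}^i$ directly from the definitions, then deduce $\overline{h}_{ij}$ from the identity $\overline{h}_{ij}=\overline{g}_{ij}-\overline{\ell}_i\overline{\ell}_j$, use the resulting rank-one structure of $\overline{h}_{ij}$ to extract $\overline{m}_i$, raise indices to obtain $\overline{m}^i$, and finally verify the orthonormality relations in \textbf{(iv)} by direct contraction.

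For \textbf{(i)}, applying $\dot{\partial}_i$ to $\overline{F}=e^{\phi}F$ by the product rule and invoking the identity $F\dot{\partial}_i\phi=\phi_{;2}\,m_i$ from \eqref{phi_i.2.dim} yields $\overline{\ell}_i=e^{\phi}(\ell_i+\phi_{;2}\,m_i)$, while $\overline{\ell}^i=y^i/\overline{F}=e^{-\phi}\ell^i$ is immediate. For \textbf{(ii)}, I will subtract $\overline{\ell}_i\overline{\ell}_j=e^{2\phi}[\ell_i\ell_j+\phi_{;2}(\ell_im_j+\ell_jm_i)+(\phi_{;2})^{2}m_im_j]$ from the first line of \eqref{2.form.g_ij.bar.2.dim}; the $\ell_i\ell_j$ piece and the symmetric cross-term both cancel, and $g_{ij}-\ell_i\ell_j=h_{ij}=\varepsilon m_im_j$ reduces the remainder to $\overline{h}_{ij}=e^{2\phi}[h_{ij}+(\sigma-(\phi_{;2})^{2})m_im_j]$.

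For \textbf{(iii)}, rewriting \textbf{(ii)} via $\varepsilon+\sigma-(\phi_{;2})^{2}=\rho^{-1}$ gives $\overline{h}_{ij}=e^{2\phi}\rho^{-1}m_im_j$. The defining property $\overline{h}_{ij}=\overline{\varepsilon}\,\overline{m}_i\overline{m}_j$ of the modified Berwald frame, together with the signature identification $\overline{\varepsilon}=\varepsilon$ (which comes from comparing $\det(\overline{g}_{ij})$ computed in the proof of Theorem~\ref{ness.suff.codtion.F.bar.Finsler} with $\overline{\mathfrak{g}}=\overline{\varepsilon}(\overline{\ell}_1\overline{m}_2-\overline{\ell}_2\overline{m}_1)^2$), then forces $\overline{m}_i=e^{\phi}\sqrt{\varepsilon/\rho}\,m_i$ up to an orientation sign. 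To obtain $\overline{m}^i$, I will raise the index via $\overline{m}^i=\overline{g}^{ij}\overline{m}_j$, feeding the formula for $\overline{m}_j$ just found into the expression for $\overline{g}^{ij}$ in Proposition~\ref{prop of g^ij}; the contractions $g^{ij}m_j=m^i$, $\ell^jm_j=0$, $m^jm_j=\varepsilon$ collapse everything and, after simplifying $\sqrt{\varepsilon/\rho}\cdot\rho=\sqrt{\varepsilon\rho}$, produce the stated form of $\overline{m}^i$.

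Part \textbf{(iv)} is then a direct bookkeeping check: substitute \textbf{(i)} and \textbf{(iii)} and reduce using $\ell^i\ell_i=1$, $m^im_i=\varepsilon$, $\ell^im_i=\ell_im^i=0$ from Lemma~\ref{properties.2.dim.Fins}; the cross terms $\phi_{;2}$ and $\varepsilon\phi_{;2}$ cancel pairwise. The main obstacle I anticipate is in \textbf{(iii)}: keeping the square-root branches consistent so that $\overline{m}_i$ carries the correct orientation, and justifying $\overline{\varepsilon}=\varepsilon$, both of which require the sign of $\varepsilon/\rho$ to be controlled by the non-degeneracy condition $\sigma+\varepsilon-(\phi_{;2})^{2}\neq 0$ imposed in \eqref{the anisotropic conformal transformation2}.
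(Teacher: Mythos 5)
Your proposal is correct and follows essentially the same route as the paper: part \textbf{(i)} by differentiating $\overline{F}=e^{\phi}F$ and invoking \eqref{phi_i.2.dim}, part \textbf{(iii)} by reading $\overline{m}_{i}$ off the rank-one part of $\overline{h}_{ij}$ and raising the index with Proposition \ref{prop of g^ij}, and part \textbf{(iv)} by direct contraction. The only (harmless) variations are that you get $\overline{\ell}^{\,i}$ from $y^{i}/\overline{F}$ instead of from $\overline{g}^{ij}\overline{\ell}_{j}$ and $\overline{h}_{ij}$ from $\overline{g}_{ij}-\overline{\ell}_{i}\overline{\ell}_{j}$ instead of from $\overline{F}\,\dot{\partial}_{i}\dot{\partial}_{j}\overline{F}$, and your care about the signature in \textbf{(iii)} is a point the paper glosses over --- though note that the non-degeneracy condition only gives $\rho\neq 0$, so the positivity of $\varepsilon/\rho$ needed for the square roots is an implicit extra assumption rather than a consequence of \eqref{the anisotropic conformal transformation2}.
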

\begin{proof}
\textbf{(i)} Since $\overline{F}=e^{\phi}F, $ by \eqref{phi_i.2.dim}, we get
\begin{equation}\label{differentiate F w.r.t y^i}
  \dot{\partial_{i}}\overline{F}=\overline{\ell}_{i}=e^{\phi}F\dot{\partial_{i}}\phi +e^{\phi} \, \dot{\partial_{i}}F=e^{\phi}[\ell_{i}+F\dot{\partial_{i}}\phi]=e^{\phi}[\ell_{i}+\phi_{;2}m_{i}].
  \end{equation}
Then from \eqref{g^ij.bar.2.dim}, we obtain
\begin{flalign*}
     \overline{\ell}^{j}  :=\overline{g}^{ij}\,\overline{\ell}_{i}&=e^{-\phi}[\rho(\sigma+\varepsilon)\ell^{i}\ell^{j}-\varepsilon\phi_{;2}\;\rho(\ell^{i}m^{j}+\ell^{j}m^{i})+\rho m^{i}m^{j}](\ell_{i}+\phi_{;2}\; m_{i}) \\
     & =e^{-\phi}\rho \,\ell^{j}(\sigma+\varepsilon-(\phi_{;2})^{2})=e^{-\phi}\,\ell^{j}.
\end{flalign*}

\textbf{(ii}) Differentiating \eqref{differentiate F w.r.t y^i} with respect to $ y^{j}$, we get
\begin{equation*}
\dot{\partial_{i}}\dot{\partial_{j}}\overline{F}= e^{\phi}[F(\dot{\partial_{i}}\phi)(\dot{\partial_{j}}\phi)+F\dot{\partial_{i}}\dot{\partial_{j}}\phi+(\dot{\partial_{i}}F)\dot{\partial_{j}}\phi+(\dot{\partial_{j}}F)\dot{\partial_{i}}\phi+\dot{\partial_{i}}\dot{\partial_{j}}F].
\end{equation*}
From \eqref{F^2.phi_ij-2d} and \eqref{phi_i.2.dim}, we obtain 
$$\overline{h}_{ij}  =\overline{F}\,\dot{\partial_{i}}\dot{\partial_{j}}\overline{F} =e^{2\phi}[h_{ij}+(\sigma-(\phi_{;2})^{2})m_{i}m_{j}].$$

\textbf{(iii)} From  \textbf{(ii)}, we have
 $\varepsilon \overline{m}_{i}\overline{m}_{j} =e^{2\phi}[ \sigma+\varepsilon-(\phi_{;2})^{2}] m_{i}m_{j} = \frac{ e^{2\phi}}{\rho}\, m_{i}m_{j}.$\\
 Hence,  $ \overline{m}_{i}=\sqrt{\frac{\varepsilon}{\rho}}\,e^{\phi}\,m_{i}$.
 But $m^{i}=g^{ij}m_{j}$, then from \eqref{g^ij.bar.2.dim}, we have
    \begin{equation*}
      \overline{m}^{j}   = \sqrt{\varepsilon\rho}\,e^{-\phi}\,[m^{j}-\varepsilon\phi_{;2} \;\ell^{j}].
    \end{equation*}
    
\textbf{(iv)} It follows from \textbf{(i)} and \textbf{(iii)}.
  \vspace*{-.45cm}\[\qedhere\]
 
\end{proof}

\vspace*{.3cm}

In view of Proposition \ref{tras-of.l_i.l^j.c_ijk} \textbf{(iv)}, we have the following corollary.
\begin{corollary}
Let $(M,F)$ be a conic pseudo-Finsler surface. The anisotropic conformal transformation  of a modified Berwald frame is a modified Berwald frame.
\end{corollary}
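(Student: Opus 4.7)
The plan is to verify that the pair $(\overline{\ell}_i,\overline{m}_i)$ obtained by anisotropic conformal transformation satisfies the defining properties of a modified Berwald frame associated to the metric $\overline{g}_{ij}$. Recall that, by definition, a modified Berwald frame $(\ell,m)$ on a conic pseudo-Finsler surface is characterized by the three normalization/orthogonality conditions $g(\ell,\ell)=1$, $g(\ell,m)=0$, $g(m,m)=\varepsilon$, together with the fact that $\ell^i=y^i/F$ is the normalized supporting element. In index form (see Lemma~\ref{properties.2.dim.Fins}\textbf{(i)}--\textbf{(ii)}), these amount to $\ell_i\ell^i=1$, $m_i m^i=\varepsilon$, and $\ell^i m_i=\ell_i m^i=0$.

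First, I would observe that $\overline{\ell}^i=y^i/\overline{F}$ is the supporting element of $\overline{F}$, since Proposition~\ref{tras-of.l_i.l^j.c_ijk}\textbf{(i)} gives $\overline{\ell}^i=e^{-\phi}\ell^i=e^{-\phi}y^i/F=y^i/\overline{F}$, so $\overline{\ell}^i$ is built from $\overline{F}$ in exactly the required way. Next, I would invoke directly Proposition~\ref{tras-of.l_i.l^j.c_ijk}\textbf{(iv)}, which asserts precisely $\overline{\ell}_i\overline{\ell}^i=1$, $\overline{m}_i\overline{m}^i=\varepsilon$, and $\overline{\ell}^i\overline{m}_i=\overline{\ell}_i\overline{m}^i=0$. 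These are exactly the normalization and orthogonality relations required of a modified Berwald frame, now read with respect to the raised/lowered indices governed by $\overline{g}_{ij}$ and $\overline{g}^{ij}$.

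There is essentially no obstacle: the corollary is an immediate rereading of item \textbf{(iv)} of the previous proposition together with the identification $\overline{\ell}^i=y^i/\overline{F}$. The only point worth stressing in the write-up is that the indices on the bar quantities are raised/lowered using $\overline{g}^{ij}$ and $\overline{g}_{ij}$, so that $\overline{\ell}_i\overline{\ell}^i=\overline{g}(\overline{\ell},\overline{\ell})$ etc., making the identification with the abstract Berwald-frame conditions transparent. Hence $(\overline{\ell}_i,\overline{m}_i)$ is a modified Berwald frame for $(M,\overline{F})$, completing the proof.
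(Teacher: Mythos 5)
Your proposal is correct and follows essentially the same route as the paper, which derives the corollary directly from Proposition \ref{tras-of.l_i.l^j.c_ijk} \textbf{(iv)}. Your additional observation that $\overline{\ell}^i=e^{-\phi}\ell^i=y^i/\overline{F}$ is the supporting element of $\overline{F}$ is a worthwhile explicit remark, but it does not change the argument.
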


Now, we find the anisotropic conformal transformation of some  non-Riemannian quantities.
 \begin{proposition}\label{transform of cartan}
Under the anisotropic conformal transformation \eqref{the anisotropic conformal transformation2}, we  get
  \begin{description}
    \item[(i)] $F\;\overline{C}_{ijk} = e^{2\phi}F\,C_{ijk}+e^{2\phi}[\varepsilon \mathcal{I}\sigma+\frac{1}{2}\sigma_{;2}+\phi_{;2}(\sigma+2\varepsilon)]m_{i}m_{j}m_{k}$\\
  $\hspace*{1cm} =e^{2\phi}[\mathcal{I}(1+\varepsilon \sigma)+\frac{1}{2}\sigma_{;2}+\phi_{;2}(\sigma+2\varepsilon)]m_{i}m_{j}m_{k},$
    \item[(ii)] $\overline{\mathcal{I}}=(\varepsilon\rho)^{\frac{3}{2}}[\mathcal{I}(1+\varepsilon \sigma)+\frac{1}{2}\sigma_{;2}+\phi_{;2}(\sigma+2\varepsilon)],$
    \item[(iii)]$F\overline{C}_{jk}^{i}=\varepsilon\rho[F C_{jk}^{i}+\lbrace\varepsilon \mathcal{I}\sigma+\frac{1}{2}\sigma_{;2}+\phi_{;2}(\sigma+2\varepsilon)\rbrace m^{i}-\varepsilon\phi_{;2}\lbrace\mathcal{I}(1+\varepsilon\sigma)+\frac{1}{2}\sigma_{;2}+\phi_{;2}(\sigma+2\varepsilon)\rbrace\ell^{i}]m_{j}m_{k}.$
      \end{description}
    \end{proposition}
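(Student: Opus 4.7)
The plan is to derive (i) by differentiating $\overline{g}_{ij}$ and then use (i) to obtain (ii) and (iii) with minimal additional work.

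For (i), I apply $\dot{\partial}_k$ to the expression $\overline{g}_{ij}=e^{2\phi}[g_{ij}+\phi_{;2}(\ell_i m_j+\ell_j m_i)+\sigma m_i m_j]$ from Proposition \ref{lemma of g_ij}, then multiply by $F/2$ to get $F\overline{C}_{ijk}$. The needed ingredients are: $F\dot{\partial}_k\phi=\phi_{;2}m_k$ from \eqref{phi_i.2.dim} (so $F\dot{\partial}_k e^{2\phi}=2e^{2\phi}\phi_{;2}m_k$); the identity $\dot{\partial}_k g_{ij}=2C_{ijk}$; Lemma \ref{properties.2.dim.Fins}\,(iv)-(v) for $\dot{\partial}_k\ell_i$ and $\dot{\partial}_k m_i$; and Remark \ref{remark phi and rho }(b) for $F\dot{\partial}_k\phi_{;2}=\phi_{;2;2}m_k$ and $F\dot{\partial}_k\sigma=\sigma_{;2}m_k$. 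After expanding $g_{ij}=\ell_i\ell_j+\varepsilon m_i m_j$ and collecting, every resulting term falls into one of three tensor types: $\ell_i\ell_j m_k$, the symmetric $(\ell_i m_j+\ell_j m_i)m_k$, or $m_i m_j m_k$.

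The main obstacle is showing that the first two types cancel. The $\ell_i\ell_j m_k$ coefficient is simply $\phi_{;2}-\phi_{;2}=0$, where the two contributions come from $\dot{\partial}_k e^{2\phi}$ acting on $\ell_i\ell_j\subset g_{ij}$ and from the $-\ell_i\ell_j m_k$ piece produced by $\dot{\partial}_k(\phi_{;2}\ell_i m_j)$ via $\dot{\partial}_k m_j$. The $(\ell_i m_j+\ell_j m_i)m_k$ coefficient assembles to $(\phi_{;2})^2+\tfrac{1}{2}\phi_{;2;2}+\tfrac{1}{2}\varepsilon\mathcal{I}\phi_{;2}-\tfrac{1}{2}\sigma$, which vanishes identically by the definition $\sigma=\phi_{;2;2}+\varepsilon\mathcal{I}\phi_{;2}+2(\phi_{;2})^2$ recorded in Remark \ref{remark phi and rho }(a). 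What survives is the $m_i m_j m_k$ coefficient, which sums to $\varepsilon\mathcal{I}\sigma+\tfrac{1}{2}\sigma_{;2}+\phi_{;2}(\sigma+2\varepsilon)$, yielding the first line of (i); the second form follows at once from $FC_{ijk}=\mathcal{I} m_i m_j m_k$.

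For (ii), the two-dimensional identity $\overline{F}\,\overline{C}_{ijk}=\overline{\mathcal{I}}\,\overline{m}_i\overline{m}_j\overline{m}_k$ combined with $\overline{F}=e^{\phi}F$ gives $e^{\phi}F\overline{C}_{ijk}=\overline{\mathcal{I}}\,\overline{m}_i\overline{m}_j\overline{m}_k$. Substituting (i) on the left and $\overline{m}_i=\sqrt{\varepsilon/\rho}\,e^{\phi}m_i$ from Proposition \ref{tras-of.l_i.l^j.c_ijk}(iii) on the right, so that $\overline{m}_i\overline{m}_j\overline{m}_k=(\varepsilon/\rho)^{3/2}e^{3\phi}m_i m_j m_k$, and solving for $\overline{\mathcal{I}}$ yields the stated expression after noting that $(\rho/\varepsilon)^{3/2}=(\varepsilon\rho)^{3/2}$ since $\varepsilon^2=1$. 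For (iii), I raise the first index through $F\overline{C}^{i}_{jk}=\overline{g}^{ir}\,F\overline{C}_{rjk}$, inserting $\overline{g}^{ir}$ from Proposition \ref{prop of g^ij} and $F\overline{C}_{rjk}$ from (i). The contractions collapse neatly using $g^{ir}m_r=m^i$, $\ell^r m_r=0$, $m^r m_r=\varepsilon$, so each piece of $\overline{g}^{ir}$ produces at most an $m^i$ and an $\ell^i$ component. Grouping and using $FC^{i}_{jk}=\mathcal{I} m^i m_j m_k$ then matches the coefficient of $m^i m_j m_k$ with $\varepsilon\rho(\mathcal{I}+\Lambda)=\varepsilon\rho[\mathcal{I}(1+\varepsilon\sigma)+\tfrac{1}{2}\sigma_{;2}+\phi_{;2}(\sigma+2\varepsilon)]$ (where $\Lambda$ denotes the bracket of (i)) and the coefficient of $\ell^i m_j m_k$ with $-\rho\phi_{;2}$ times the same quantity, delivering the claimed formula.
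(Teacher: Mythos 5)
Your proposal is correct and follows essentially the same route as the paper: differentiate $\overline{g}_{ij}$ from Proposition \ref{lemma of g_ij} using Lemma \ref{properties.2.dim.Fins} and Remark \ref{remark phi and rho}, collect in the frame basis (your explicit verification that the $\ell_i\ell_j m_k$ and $(\ell_i m_j+\ell_j m_i)m_k$ coefficients cancel via the definition of $\sigma$ is exactly what the paper's expansion amounts to), then obtain (ii) from $\overline{F}\,\overline{C}_{ijk}=\overline{\mathcal{I}}\,\overline{m}_i\overline{m}_j\overline{m}_k$ and (iii) by contracting with $\overline{g}^{ir}$ from Proposition \ref{prop of g^ij}. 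No gaps.
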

\begin{proof}
 
\textbf{(i)} Differentiating \eqref{2.form.g_ij.bar.2.dim} with respect to $y^{k}$, we have 
  \begin{flalign*}
    2\overline{C}_{ijk} =&\dot{\partial_{k}}(\overline{g}_{ij})\\
    =&2e^{2\phi}\dot{\partial}_{k}\phi[\ell_{i}\ell_{j}+\phi_{;2}(\ell_{i}m_{j}+\ell_{j}m_{i})+(\sigma+\varepsilon)m_{i}m_{j}]\\
    & +e^{2\phi}\dot{\partial}_{k}[g_{ij}+\phi_{;2} (\ell_{i}m_{j}+\ell_{j}m_{i})+\sigma m_{i}m_{j}]. 
    \end{flalign*} 
    Multiplying both sides of the above equation by $F$ and using Lemma \ref{properties.2.dim.Fins} \textbf{(iv)} and \textbf{(v)}, we have
    \begin{flalign*}   
    2F\overline{C}_{ijk} & =e^{2\phi} [2\phi_{;2}\ell_{i}\ell_{j}m_{k}+2(\phi_{;2})^{2} (\ell_{i}m_{j}m_{k}+\ell_{j}m_{i}m_{k})+2\phi_{;2}(\sigma+\varepsilon)m_{i}m_{j}m_{k}] \\
    & \;\;+2Fe^{2\phi}C_{ijk}+e^{2\phi}[\phi_{;2;2}(\ell_{i}m_{j}m_{k}+\ell_{j}m_{i}m_{k})+\phi_{;2}(\varepsilon m_{i}m_{j}m_{k}-\ell_{i}\ell_{j}m_{k}\\
    &  \;\;+\varepsilon \mathcal{I}\ell_{i}m_{j}m_{k}+\varepsilon m_{i}m_{j}m_{k} -\ell_{i}\ell_{j}m_{k}+\varepsilon \mathcal{I}\ell_{j}m_{i}m_{k})+\sigma_{;2}m_{i}m_{j}m_{k} \\
    &\;\; +\sigma(-\ell_{i}m_{j}m_{k}+\varepsilon \mathcal{I}m_{i}m_{j}m_{k}-\ell_{j}m_{i}m_{k}+\varepsilon \mathcal{I}m_{i}m_{j}m_{k})].
  \end{flalign*}
  By Remark \ref{remark phi and rho }, we get
\begin{equation*}
  F\overline{C}_{ijk} =F e^{2\phi}C_{ijk}+e^{2\phi}(\varepsilon \mathcal{I}\sigma+\frac{1}{2}\sigma_{;2}+\phi_{;2}(\sigma+2\varepsilon))m_{i}m_{j}m_{k}.
\end{equation*}

\textbf{(ii})  Since we have $ \overline{F}\;\overline{C}_{ijk}=\overline{\mathcal{I}}\;\overline{m}_{i}\;\overline{m}_{j}\;\overline{m}_{k},$ then by \textbf{(i)}
and Proposition \ref{tras-of.l_i.l^j.c_ijk} \textbf{(iii)}, we obtain
\begin{equation*}
  \overline{\mathcal{I}}e^{3\phi}\left(  \frac{\varepsilon}{\rho}\right) ^{\frac{3}{2}}m_{i}m_{j}m_{k}=e^{3\phi}[\mathcal{I}(1+\varepsilon \sigma)+\frac{1}{2}\sigma_{;2}+\phi_{;2}(\sigma+2\varepsilon)]m_{i}m_{j}m_{k}.
\end{equation*}
That is, 
\begin{equation*}\label{mean scalar F bar}
  \overline{\mathcal{I}}=(\varepsilon\rho)^{\frac{3}{2}}[\mathcal{I}(1+\varepsilon \sigma)+\frac{1}{2}\sigma_{;2}+\phi_{;2}(\sigma+2\varepsilon)].
\end{equation*}

\textbf{(iii)} By using \eqref{g^ij.bar.2.dim} and  \textbf{(i)}, we can write
    \begin{align*}
    F\overline{C}_{jk}^{i}  =&\overline{g}^{ih}\overline{C}_{hjk} =[\varepsilon\rho g^{ih}+\sigma\rho \ell^{i}\ell^{h}
-\varepsilon\phi_{;2} \rho(\ell^{i}m^{h}+\ell^{h}m^{i})] \\
    &\qquad \qquad \,\, [F C_{ijk}+(\varepsilon \mathcal{I}\sigma+\frac{1}{2}\sigma_{;2} +\phi_{;2}(\sigma+2\varepsilon))m_{i}m_{j}m_{k}]\\
     =& \varepsilon\rho[F C_{jk}^{i}+(\varepsilon \mathcal{I}\sigma+\frac{1}{2}\sigma_{;2}+\phi_{;2}(\sigma+2\varepsilon))m^{i}m_{j}m_{k}\\
     &\;\;-\varepsilon\phi_{;2} (\mathcal{I}(1+\varepsilon \sigma)+\frac{1}{2}\sigma_{;2}+\phi_{;2}(\sigma+2\varepsilon))\ell^{i}m_{j}m_{k}].
    \end{align*}
  	\vspace*{-1.5cm}\[\qedhere\]
 
\end{proof}

\begin{remark}

\emph{\textbf{(a)}} Substituting  $ C^{i}_{jk}= g^{ih}C_{hjk}=\frac{\mathcal{I}}{F}m^{i}m_{j}m_{k}$ in Proposition \emph{\ref{transform of cartan}} \emph{ \textbf{(iii)}}, we get
  \begin{equation*}\label{1,2 tensor of cartan}
    F\overline{C}^{i}_{jk}=\rho(\mathcal{I}(1+\varepsilon \sigma)+\frac{1}{2}\sigma_{;2}+\phi_{;2}(\sigma+2\varepsilon))(\varepsilon m^{i}m_{j}m_{k}-\phi_{;2}\ell^{i}m_{j}m_{k}).
  \end{equation*} 
  
\emph{\textbf{(b)}} From Lemma \emph{\ref{properties.2.dim.Fins}} \emph{\textbf{(v)}}  and Proposition \emph{\ref{tras-of.l_i.l^j.c_ijk}}, we can get another equivalent formula of the main scalar $\overline{\mathcal{I}}$ of $\overline{F}$   as follows
 \begin{align}\label{mean scalar F bar formula 2}
  \overline{\mathcal{I}}=\sqrt{\varepsilon\rho}\;[\mathcal{I}+2\varepsilon\phi_{;2}-\frac{\varepsilon\rho_{;2}}{2\rho}].
\end{align}
The equivalence of Proposition \emph{\ref{transform of cartan}} \emph{\textbf{(ii)}} and \eqref{mean scalar F bar formula 2} can be proven easily.

\emph{\textbf{(c)}} From \eqref{mean scalar F bar formula 2}, $\overline{\mathcal{I}}=\sqrt{\varepsilon\rho}\mathcal{I}$ if and only if  $4\rho\phi_{;2}=\rho_{;2}$. Hence, a necessary and sufficient condition for the property of being Riemannian space to be preserved under anisotropic conformal change is that   $$4\rho\phi_{;2}=\rho_{;2}.$$
\end{remark}

\medskip
In the following we find the change of the geodesic spray under the anisotropic conformal transformation \eqref{the anisotropic conformal transformation2}  and  consequently we find  the transformation of the nonlinear connection and Berwald connection.
 
\begin{proposition}\label{transformation of spray}
Under the anisotropic conformal transformation \eqref{the anisotropic conformal transformation2},  the geodesic spray coefficients $ \overline{G}^{i}$  and $G^{i}$ of $ \overline{F}$ and $F,$ respectively, are related by
\begin{equation}\label{coeff of sprayof F bar}
  \overline{G}^{i}= G^{i}+Q \,m^{i}+P\,\ell^{i},
\end{equation}
where
\begin{align}
P=&\frac{1}{4}[-\phi_{;2}\;A+2 F^{2}(\partial_{k}\phi)\ell^{k}]), \;\quad\;Q=\frac{1}{4}[\varepsilon A+2\varepsilon F(\partial_{k}F)m^{k}-2F(\partial_{k}F)_{;2}\ell^{k}],\label{formula for P, Q}\\
A=&\{2F\phi_{;2}\,\rho \;(F\partial_{k}\phi+\partial_{k}F)+2F\rho(F\partial_{k}\phi+\partial_{k}F)_{;2}\}\ell^{k}-2\varepsilon F\rho (F\partial_{k}\phi+\partial_{k}F)m^{k}.\nonumber
\end{align}
\end{proposition}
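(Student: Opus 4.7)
The plan is to substitute $\overline{F}^2 = e^{2\phi}F^2$ into the defining formula $\overline{G}^i = \tfrac{1}{4}\overline{g}^{ij}\bigl(y^k\dot{\partial}_j\partial_k\overline{F}^2 - \partial_j\overline{F}^2\bigr)$, contract with the inverse metric from Proposition~\ref{prop of g^ij}, and decompose the resulting expression along the modified Berwald coframe so that the coefficients of $\ell^i$ and $m^i$ can be read off as $P$ and $Q$.

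First I would differentiate $\overline{F}^2 = e^{2\phi}F^2$ to obtain
\begin{equation*}
\partial_j\overline{F}^2 = e^{2\phi}\bigl(2F^2\partial_j\phi + \partial_j F^2\bigr),
\end{equation*}
\begin{equation*}
y^k\dot{\partial}_j\partial_k\overline{F}^2 = e^{2\phi}\Bigl[y^k\dot{\partial}_j\partial_k F^2 + 2F^2 y^k\dot{\partial}_j\partial_k\phi + 2(y^k\partial_k\phi)\dot{\partial}_j F^2 + 2(y^k\partial_k F^2)\dot{\partial}_j\phi + 4F^2(\dot{\partial}_j\phi)(y^k\partial_k\phi)\Bigr].
\end{equation*}
Each $\dot{\partial}_j$-derivative is then rewritten in the coframe using $F\dot{\partial}_j\phi = \phi_{;2}\,m_j$ from \eqref{phi_i.2.dim}, together with Lemma~\ref{properties.2.dim.Fins} \textbf{(vi)}--\textbf{(vii)} applied to $F$ and to $\partial_k\phi$. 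Homogeneity of $\phi$ forces $\phi_{;1}=0$ and turns $y^k\partial_k\phi$ into an $h(1)$ scalar, while $y^k\partial_k F^2 = 2F\,y^k\partial_k F$. Collecting, the bracket $y^k\dot{\partial}_j\partial_k\overline{F}^2 - \partial_j\overline{F}^2$ takes the form $e^{2\phi}\bigl(y^k\dot{\partial}_j\partial_k F^2 - \partial_j F^2\bigr) + U\,\ell_j + V\,m_j$ for explicit scalars $U,V$ built out of $\phi_{;2}, (\partial_k\phi)_{;2}, \partial_k\phi, \partial_k F$ and their contractions with $\ell^k$ and $m^k$.

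Next I contract with $\overline{g}^{ij} = e^{-2\phi}\bigl[\varepsilon\rho\,g^{ij} + \sigma\rho\,\ell^i\ell^j - \varepsilon\phi_{;2}\rho(\ell^i m^j + \ell^j m^i)\bigr]$. Pairing the Riemannian block $\varepsilon\rho\,g^{ij}$ with $e^{2\phi}(y^k\dot{\partial}_j\partial_k F^2 - \partial_j F^2) = 4 g_{jk}G^k$ produces $4\varepsilon\rho\,G^i$, which I split via Lemma~\ref{properties.2.dim.Fins} \textbf{(iii)} as $4\varepsilon\rho\bigl[(\ell_kG^k)\ell^i + \varepsilon(m_kG^k)m^i\bigr]$. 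The anisotropic blocks $\sigma\rho\,\ell^i\ell^j$ and $\varepsilon\phi_{;2}\rho(\ell^i m^j + \ell^j m^i)$ contribute further $\ell^i$- and $m^i$-aligned terms weighted by $\ell_kG^k$ and $m_kG^k$. The defining identity $\rho\bigl[\sigma+\varepsilon-(\phi_{;2})^2\bigr]=1$ collapses the combined coefficient of $G^i$ back to exactly $1$, so that the original $G^i$ is recovered intact; the residual pieces, together with the $\overline{g}^{ij}(U\ell_j + V m_j)$ contribution, then organise into $4P\,\ell^i + 4Q\,m^i$.

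The main obstacle is the sheer bookkeeping: every $\dot{\partial}_j$-derivative of an $h(0)$ or $h(r)$ quantity has to be coframe-decomposed, and the many contractions of position derivatives $\partial_k\phi$, $\partial_k F$ with $\ell^k$ and $m^k$ must be regrouped carefully to match the compact form of $A$ in \eqref{formula for P, Q}. No conceptual difficulty arises once the identity $\rho[\sigma+\varepsilon-(\phi_{;2})^2]=1$ is invoked to eliminate the $\rho$-factor in front of $G^i$; the remaining work is term-by-term verification that the $\ell^i$-coefficient equals the stated $P$ and the $m^i$-coefficient equals the stated $Q$.
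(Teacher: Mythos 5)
Your proposal is correct and follows essentially the same route as the paper: differentiate $\overline{F}^2=e^{2\phi}F^2$, decompose the $\dot{\partial}_j$-derivatives in the modified Berwald coframe via \eqref{phi_i.2.dim} and Lemma~\ref{properties.2.dim.Fins}, contract with $\overline{g}^{ij}$ from Proposition~\ref{prop of g^ij}, and use $\rho[\sigma+\varepsilon-(\phi_{;2})^2]=1$ to isolate $G^i$ and read off the $\ell^i$- and $m^i$-coefficients as $P$ and $Q$. The remaining work is only the term-by-term bookkeeping you describe.
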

\begin{proof}
Since $ \overline{F}^{2}=e^{2\phi}F^{2},$ we get
$\partial_{k}\overline{F}^{2} =2Fe^{2\phi}[F\;\partial_{k}\phi+\partial_{k}F]. \;\text{Thus,}$
\begin{equation*}
\dot{\partial}_{j}\partial_{k}\overline{F}^{2}=e^{2\phi}[4\phi_{;2}( F\partial_{k}\phi+\partial_{k}F)m_{j}+4F(\partial_{k}\phi)\ell_{j}+2F(\partial_{k}\phi)_{;2}\;m_{j}
  +\dot{\partial}_{j}\partial_{k}F^{2}].
\end{equation*}
Consequently,
\begin{flalign}\label{G_K Ceoff-1form}
  y^{k}\dot{\partial}_{j}\partial_{k}\overline{F}^{2}-\partial_{j}\overline{F}^{2} & =e^{2\phi}[y^{k}\dot{\partial}_{j}\partial_{k}F^{2}-\partial_{j}F^{2}+(4\phi_{;2}( \;F^{2}\partial_{k}\phi+F\partial_{k}F)+2F^{2}(\partial_{k}\phi)_{;2})m_{j}\ell^{k}
  \nonumber \\
  & \quad +4F^{2}(\partial_{k}\phi)\ell_{j}\ell^{k}-2F^{2}\partial_{j}\phi].
\end{flalign}
Since $\partial_{k}F^{2}$ is $h(2),$  Eqn. \eqref{G_K Ceoff-1form}  can be written as
\begin{flalign}\label{G_k coeff 2 form}
  y^{k}\dot{\partial}_{j}\partial_{k}\overline{F}^{2}-\partial_{j}\overline{F}^{2}  & =e^{2\phi}[\{4\phi_{;2}(F^{2}\partial_{k}\phi+F\partial_{k}F)+2F(F\partial_{k}\phi+\partial_{k}F)_{;2}\}m_{j}\ell^{k}
  \nonumber \\
  & \quad +4F(F\partial_{k}\phi+\partial_{k}F)\ell_{j}\ell^{k} -2F(F\partial_{j}\phi+\partial_{j}F)].
\end{flalign}
From \eqref{g^ij.bar.2.dim}, \eqref{G_K Ceoff-1form} and \eqref{G_k coeff 2 form}, we get
\begin{flalign*}
\overline{G^{i}}  =&\frac{1}{4}\overline{g}^{ij}(y^{k}\,\dot{\partial}_{j}\partial_{k}\overline{F}^{2}-\partial_{j}\overline{F}^{2}) \\
    =& G^{i}+\frac{1}{4}[\{-2F(\phi_{;2})^{2}\rho(F\partial_{k}\phi+\partial_{k}F)-2F\phi_{;2}\rho(F\partial_{k}\phi+\partial_{k}F)_{;2}+2F^{2}\partial_{k}\phi\}\ell^{k}
  \\
 &+2\varepsilon F\rho \phi_{;2}(F\partial_{k}\phi+\partial_{k}F)m^{k}]\ell^{i}
+\frac{1}{4}[\{2\varepsilon F\phi_{;2}\rho(F\partial_{k}\phi+\partial_{k}F)+2\varepsilon F\rho(F\partial_{k}\phi+\partial_{k}F)_{;2}
  \\
 &-2F(\partial_{k}F)_{;2}\}\ell^{k}+\{-2F\rho(F\partial_{k}\phi+\partial_{k}F)+2\varepsilon F\partial_{k}F\}m^{k}]m^{i}.
\end{flalign*}
Hence,  the result follows.
\end{proof}

\begin{remark}
Using $\delta_k\phi=\partial_k \phi-G_k^i\dot{\partial}_i\phi$ and $\delta_kF^2=0$, we obtain the following technical equalities:
\begin{equation}
\renewcommand{\arraystretch}{1.5}
  \left.\begin{array}{r@{\;}l}
\emph{\textbf{(a)}}& F^2\ell^k\,\partial_k\phi=F^2\phi_{,1}+2G^k\phi_{;2}\;m_k,\\
\emph{\textbf{(b)}}& F m^k\,\partial_k\phi=\varepsilon F\,\phi_{,2}+G^i_{k}\;\phi_{;2}\;m^k m_i,\\
\emph{\textbf{(c)}}& \ell^k\,\partial_k F^2=4\,G^k\,\ell_{k},\\
\emph{\textbf{(d)}}& m^k\partial_k F^2=2FG_k^i\,\ell_{i}\,m^k,\\
\emph{\textbf{(e)}}& F^2\ell^k(\partial_k\phi)_{;2}=F^2\,\phi_{,1;2}+\varepsilon F G^k_i\,\phi_{;2}\;m_km^i+2G^k\;\phi_{;2;2}\;m_k+2\varepsilon\phi_{;2}\;\mathcal{I}\; G^k m_k\\
&\qquad\qquad\qquad-2G^k\,\phi_{;2}\,\ell_k-F^2\,\phi_{,2}\\
\emph{\textbf{(f)}}& \ell^k(\partial_k F^2)_{;2}=2\varepsilon F \,G^k_i\,\ell_k\, m^i+4\varepsilon\, G^k \,m_k.
  \end{array}\right\} \label{eq}
\end{equation}
Subsituting \eqref{eq} into the expressions of $P$ and $Q$ given by \eqref{formula for P, Q}, we obtain the following short and compact expressions:
\begin{align}
2Q&=\varepsilon\rho F^2(\phi_{;2}\,\phi_{,1}+\phi_{,1;2}-2\phi_{,2}),\label{formula of Q only}\\
2P&=-\rho F^2\phi_{;2}(\phi_{;2}\,\phi_{,1}+\phi_{,1;2}-2\phi_{,2})+F^2\phi_{,1}.\label{formula of P only}
\end{align}
Moreover, from \eqref{formula of Q only} and \eqref{formula of P only}, we get
\begin{align}\label{relation between P and Q}
2\varepsilon \,\phi_{;2}\,Q+2P=F^2\phi_{,1}.
\end{align}
\end{remark}
Hence, we get the following equivalent form of Proposition \ref{transformation of spray}.
\begin{theorem}
Under the anisotropic conformal transformation \eqref{the anisotropic conformal transformation2},   the geodesic spray coefficients $ \overline{G}^{i}$  and $G^{i}$ of $ \overline{F}$ and $F$, respectively,  are related by
\begin{align}\label{coefficet of spray 2}
\overline{G}^i=G^i+Qm^i+(\frac{1}{2} F^2\phi_{,1}-\varepsilon \phi_{;2}\;Q)\ell^i,
\end{align}
where $Q$ is given by \eqref{formula of Q only}.
\end{theorem}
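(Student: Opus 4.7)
The plan is to start from Proposition \ref{transformation of spray}, which already provides the decomposition $\overline{G}^i = G^i + Q\,m^i + P\,\ell^i$ with $P, Q$ (and the auxiliary $A$) given explicitly by \eqref{formula for P, Q}. The goal is merely to re-express the coefficient $P$ of $\ell^i$ in terms of $Q$ and $F^2\phi_{,1}$, thereby collapsing the formulas into the compact form \eqref{coefficet of spray 2}.

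First, I would substitute the six technical identities listed in \eqref{eq} into the expressions for $A$, $P$, and $Q$ appearing in \eqref{formula for P, Q}. Identities \textbf{(a)}--\textbf{(d)} handle the position-derivative terms $\partial_k\phi$ and $\partial_k F^2$ contracted with $\ell^k$ and $m^k$, while \textbf{(e)}--\textbf{(f)} dispose of the mixed second derivatives $(\partial_k\phi)_{;2}$ and $(\partial_kF^2)_{;2}$. These identities are essentially the decomposition $\delta_k = \partial_k - G_k^i\dot{\partial}_i$ together with the Berwald-frame differentiation formulas of Lemma \ref{properties.2.dim.Fins}; all the Berwald-coefficient terms $G^i_k$, as well as the contributions involving $G^k$ and $\mathcal{I}$, cancel among the pieces of $A$, $P$, and $Q$ once everything is grouped by horizontal derivatives $\phi_{,1}$, $\phi_{,2}$ and their $y$-derivatives $\phi_{,1;2}$. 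The result of this bookkeeping is precisely the two compact formulas \eqref{formula of Q only} and \eqref{formula of P only}.

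Once these compact forms are in hand, the claimed identity is immediate: multiplying \eqref{formula of Q only} by $\varepsilon\phi_{;2}$ and adding twice \eqref{formula of P only} produces $2\varepsilon\phi_{;2}\,Q + 2P = F^2\phi_{,1}$, which is \eqref{relation between P and Q}. Solving for $P$ yields
\[
P = \tfrac{1}{2}F^2\phi_{,1} - \varepsilon\phi_{;2}\,Q,
\]
and substituting this into $\overline{G}^i = G^i + Q\,m^i + P\,\ell^i$ gives \eqref{coefficet of spray 2}.

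The main obstacle is entirely the first step: it is a long but mechanical reduction, and the risk is losing track of signs or of the $\varepsilon$-factors when pairing identities \textbf{(e)} and \textbf{(f)} against the factor $\rho$ inside $A$. Since the final statement only records the outcome of that reduction plus the one-line elimination of $P$, no deep argument is needed; the virtue of \eqref{coefficet of spray 2} over Proposition \ref{transformation of spray} is purely the compactness gained by exploiting the linear relation \eqref{relation between P and Q} between $P$, $Q$, and $F^2\phi_{,1}$.
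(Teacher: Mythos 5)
Your proposal is correct and follows essentially the same route as the paper: the paper obtains \eqref{formula of Q only}, \eqref{formula of P only} and the relation $2\varepsilon\phi_{;2}Q+2P=F^{2}\phi_{,1}$ in the remark preceding the theorem by substituting the technical identities \eqref{eq} into \eqref{formula for P, Q}, and then reads off \eqref{coefficet of spray 2} from Proposition \ref{transformation of spray} exactly as you do. (Only a wording quibble: you add \eqref{formula of P only} once, not twice, since it already expresses $2P$.)
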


\begin{remark}\label{Rem:spray of F bar}

\emph{\textbf{(a)}} As a direct consequence of  \eqref{coeff of sprayof F bar}, the geodesic spray $\overline{S}$ associated with $\overline{F}$ has the form
  \begin{equation*}\label{spray of F bar}
    \overline{S}=S-2(P\,\ell^{i}+Q\, m^{i})\,\dot{\partial_{i}}.
   \end{equation*}
   
\emph{\textbf{(b)}} From \eqref{coefficet of spray 2}, we get $G^i=\overline{G}^{i}$ if and only if $Q=0$ and $\phi_{,1}=0.$

   \end{remark}
  
\begin{proposition}\label{transform of Barthal }
Under the anisotropic conformal transformation \eqref{the anisotropic conformal transformation2}, the coefficients of the Barthel connections  $\overline{G}^{i}_{j}$  and $G^{i}_{j}$ associated with $\overline{F}$ and $F$, respectively, are related by
\begin{equation}\label{barthel coefficient}
F\overline{G}_{j}^{i} =FG_{j}^{i}+\left\lbrace 2P\ell^{i}\ell_{j}+(P_{;2}-Q)\ell^{i}m_{j}+2Q\ell_{j}m^{i}+(\varepsilon P+Q_{;2}-\varepsilon\mathcal{I}Q)m^{i}m_{j}\right\rbrace.
\end{equation}
\end{proposition}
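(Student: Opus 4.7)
The plan is to differentiate the formula for $\overline{G}^i$ vertically, using the definition $\overline{G}^i_j := \dot{\partial}_j \overline{G}^i$, and then rewrite everything in the modified Berwald frame $(\ell^i, m^i, \ell_j, m_j)$.

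First, I would start from the spray formula established in Proposition \ref{transformation of spray}, namely $\overline{G}^i = G^i + Q\,m^i + P\,\ell^i$, and apply $\dot{\partial}_j$ to both sides to obtain
\[
\overline{G}^i_j = G^i_j + (\dot{\partial}_j Q)\,m^i + Q\,\dot{\partial}_j m^i + (\dot{\partial}_j P)\,\ell^i + P\,\dot{\partial}_j \ell^i.
\]
Multiplying by $F$ converts every $\dot{\partial}_j$ acting on a scalar or frame vector into expressions involving only the frame, by items \textbf{(iv)}, \textbf{(v)} and \textbf{(vi)} of Lemma \ref{properties.2.dim.Fins}. Specifically, $F\dot{\partial}_j \ell^i = \varepsilon m^i m_j$ and $F\dot{\partial}_j m^i = -(\ell^i + \varepsilon \mathcal{I} m^i) m_j$ handle the frame derivatives.

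Next, I would handle the vertical derivatives of the scalars $P$ and $Q$ using Lemma \ref{properties.2.dim.Fins}\,\textbf{(vi)}: $F\dot{\partial}_j f = f_{;1}\,\ell_j + f_{;2}\,m_j$, where $f_{;1} = rf$ when $f$ is $h(r)$. The one thing that must be checked here is that $P$ and $Q$ are both $h(2)$. This follows from their explicit forms \eqref{formula of Q only} and \eqref{formula of P only}: each is $F^2$ times an $h(0)$ expression in $\phi_{;2}$, $\phi_{,1}$, $\phi_{,1;2}$, $\phi_{,2}$, and $\rho$ (the latter being $h(0)$ by Remark \ref{remark phi and rho }). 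Hence $P_{;1} = 2P$ and $Q_{;1} = 2Q$, so $F\dot{\partial}_j P = 2P\,\ell_j + P_{;2}\,m_j$ and $F\dot{\partial}_j Q = 2Q\,\ell_j + Q_{;2}\,m_j$.

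Substituting these four identities back and grouping contributions according to the basis tensors $\ell^i\ell_j$, $\ell^i m_j$, $m^i \ell_j$, $m^i m_j$ produces the stated formula: the $\ell^i\ell_j$ coefficient is $2P$, the $\ell^i m_j$ coefficient is $P_{;2} - Q$, the $m^i \ell_j$ coefficient is $2Q$, and the $m^i m_j$ coefficient collects as $\varepsilon P + Q_{;2} - \varepsilon \mathcal{I} Q$. The computation is essentially mechanical; the only subtlety to flag is the homogeneity check on $P$ and $Q$, without which the $\ell_j$-components would not simplify correctly. No obstacle of substance arises once that is in place.
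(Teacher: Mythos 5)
Your proposal is correct and follows exactly the route the paper takes: the paper's proof is precisely "differentiate \eqref{coeff of sprayof F bar} with respect to $y^j$ and use Lemma \ref{properties.2.dim.Fins}," and your term-by-term bookkeeping (including the observation that $P$ and $Q$ are $h(2)$, so $P_{;1}=2P$ and $Q_{;1}=2Q$) reproduces the stated coefficients. The explicit homogeneity check is a worthwhile detail the paper leaves implicit.
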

\begin{proof}
The proof is obtained directly by differentiating \eqref{coeff of sprayof F bar}  with respect to $y^{j}$ and using  Lemma \ref{properties.2.dim.Fins}.
\end{proof}

\begin{proposition}\label{transform of berwald connection}
Under the anisotropic conformal transformation \eqref{the anisotropic conformal transformation2},   the coefficients of the Berwald connection  $\overline{G}^{i}_{jk}$  and $G^{i}_{jk}$ associated with $\overline{F}$ and $F$, respectively, are related by
\begin{align}\label{berwald connection}
 F^2\;\overline{G}^{i}_{jk} =&F^2\;{G}^{i}_{jk}+(2P\ell^{i}+2Qm^{i})\ell_{j}\ell_{k}+\{(P_{;2}-Q)\ell^{i}+(\varepsilon P+ Q_{;2}-\varepsilon\mathcal{I} Q)m^{i}\}(\ell_{j}m_{k}+\ell_{k}m_{j})\nonumber\\
 &+\{(\varepsilon P+P_{;2;2}-2Q_{;2}+\varepsilon\mathcal{I}P_{;2})\ell^{i}+(2\varepsilon P_{;2}
  +\varepsilon Q+ Q_{;2;2}-\varepsilon\mathcal{I}_{;2}Q-\varepsilon\mathcal{I} Q_{;2})m^{i}\}m_{j}m_{k}.
\end{align}
\end{proposition}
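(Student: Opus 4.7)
The key observation is that by definition $\overline{G}^i_{jk} = \dot{\partial}_k\overline{G}^i_j$, so the result will follow by differentiating the Barthel-connection formula \eqref{barthel coefficient} of Proposition~\ref{transform of Barthal } with respect to $y^k$. Writing \eqref{barthel coefficient} compactly as $F\overline{G}^i_j = FG^i_j + T^i_j$ with
$$T^i_j := 2P\ell^i\ell_j + (P_{;2}-Q)\ell^i m_j + 2Q\ell_j m^i + (\varepsilon P + Q_{;2} - \varepsilon \mathcal{I}Q)m^i m_j,$$
and applying $\dot{\partial}_k$ to both sides, together with $\dot{\partial}_k F = \ell_k$, one obtains the master equation
$$F^2\,\overline{G}^i_{jk} \;=\; F^2\, G^i_{jk} \;+\; F\,\dot{\partial}_k T^i_j \;-\; \ell_k\, T^i_j.$$
The task then reduces to evaluating $F\,\dot{\partial}_k T^i_j$ and subtracting $\ell_k T^i_j$.

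The evaluation of $F\,\dot{\partial}_k T^i_j$ is a direct Leibniz expansion, handling each factor via Lemma~\ref{properties.2.dim.Fins}. The frame derivatives come from parts (iv) and (v): $F\dot{\partial}_k\ell^i = \varepsilon m^i m_k$, $F\dot{\partial}_k m^i = -(\ell^i + \varepsilon\mathcal{I}m^i)m_k$, $F\dot{\partial}_k\ell_j = \varepsilon m_j m_k$, and $F\dot{\partial}_k m_j = -(\ell_j - \varepsilon\mathcal{I}m_j)m_k$. For the scalar coefficients, part (vi) together with Euler's identity $f_{;1}=rf$ for an $h(r)$-function yields $F\dot{\partial}_k f = 2f\,\ell_k + f_{;2}\,m_k$ for each of the $h(2)$-functions $P$, $Q$, $P_{;2}$, $Q_{;2}$ appearing in $T^i_j$, and $F\dot{\partial}_k\mathcal{I} = \mathcal{I}_{;2}\,m_k$ for the $h(0)$-function $\mathcal{I}$. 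The $2f\,\ell_k$-pieces arising from the homogeneity part assemble into $+2 T^i_j\otimes\ell_k/\cdots$-type contributions which combine with the $-\ell_k T^i_j$ subtraction to yield exactly the $\ell_j\ell_k$-block $(2P\ell^i + 2Qm^i)\ell_j\ell_k$ on the right-hand side of \eqref{berwald connection}.

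The remaining $m_k$-proportional contributions then split, after symmetrisation in $j,k$ (legitimate because the Berwald coefficients are symmetric in their lower indices), into the mixed symmetric block $(\ell_j m_k + \ell_k m_j)$ and the $m_j m_k$-block. In the mixed block, the $\ell^i$-coefficient collects $P_{;2}$ from differentiating $P$ in $2P\ell^i\ell_j$ and $-Q$ from $\dot{\partial}_k m_j$ acting on the $\ell^i m_j$ term, reproducing $P_{;2}-Q$; the $m^i$-coefficient assembles analogously as $\varepsilon P + Q_{;2} - \varepsilon\mathcal{I}Q$. The $m_j m_k$-block is the most intricate, gathering the second derivatives $P_{;2;2}$ and $Q_{;2;2}$, the term $\mathcal{I}_{;2}Q$ from differentiating $\varepsilon\mathcal{I}Q$, and cross-contributions from $\dot{\partial}_k m_j$ and $\dot{\partial}_k m^i$ acting on every remaining piece of $T^i_j$. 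The principal obstacle is purely computational bookkeeping: roughly a dozen contributions with delicate sign-tracking from the $\varepsilon\mathcal{I}$ factors in the $m$-derivatives must be gathered correctly, but no conceptual ingredient beyond Lemma~\ref{properties.2.dim.Fins} and Proposition~\ref{transform of Barthal } is required.
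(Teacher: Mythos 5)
Your proposal is correct and follows exactly the paper's route: the paper's proof likewise obtains \eqref{berwald connection} by differentiating the Barthel-connection formula \eqref{barthel coefficient} with respect to $y^k$ and invoking Lemma \ref{properties.2.dim.Fins} for the frame derivatives and homogeneity identities. Your master equation $F^2\overline{G}^i_{jk}=F^2G^i_{jk}+F\dot{\partial}_kT^i_j-\ell_kT^i_j$ and the bookkeeping of the $\ell_j\ell_k$, mixed, and $m_jm_k$ blocks reproduce the stated coefficients.
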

\begin{proof}
It follows from \eqref{barthel coefficient} by   differentiating $\overline{G}^i_j$ with respect to $y^{k}$ and using  Lemma \ref{properties.2.dim.Fins}. 
\end{proof}

 Geometric properties or geometric objects which are preserved under the anisotropic conformal transformation are said to be anisotropic conformal invariants. In general, the geodesic spray is not invariant under the anisotropic conformal change. 
 In the (isotropic) conformal transformation  $\overline{F}(x,y) = e^{\phi(x)} F(x,y)$, the homothetic transformation is the only case that leads to unchanged geodesic spray  \cite[Proposition 11.1]{Shenbook16}. For the anisotropic conformal transformation \eqref{the anisotropic conformal transformation2}, the geodesic spray is not invariant in general. Nevertheless, we have 
 
\begin{theorem}\label{phi dh-closed}
  Let $(M,F)$ be a conic pseudo-Finsler surface.  Under the anisotropic conformal transformation \eqref{the anisotropic conformal transformation2},  the following assertions are equivalent:
 \begin{description}
   \item[(i)] $\overline{S}=S$, that is, $P=Q=0,$ where $P$ and $Q$ are given by \eqref{formula of Q only} and \eqref{formula of P only}, respectively.
   \item[(ii)] $ \delta_{i}\phi=0$, or equivalently, $d_h\phi=0$.
 \end{description}
\end{theorem}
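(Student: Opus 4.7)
The plan is to reduce the equivalence to purely algebraic manipulations of the compact formulas \eqref{formula of Q only}, \eqref{formula of P only}, and the linear identity \eqref{relation between P and Q}, together with Lemma \ref{properties.2.dim.Fins}\textbf{(viii)}, which gives the decomposition $\delta_i\phi = \phi_{,1}\,\ell_i + \phi_{,2}\,m_i$. Since $(\ell_i,m_i)$ is a basis of the cotangent space of the surface at every point of $\mathcal{A}$, the condition $d_h\phi=0$ (equivalently $\delta_i\phi=0$) is equivalent to the simultaneous vanishing of the two scalar functions $\phi_{,1}$ and $\phi_{,2}$.

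For the implication \textbf{(ii)} $\Rightarrow$ \textbf{(i)}, I would assume $\phi_{,1}\equiv 0$ and $\phi_{,2}\equiv 0$ on $\mathcal{A}$. Since $\phi_{,1}$ vanishes identically, its vertical derivative $\phi_{,1;2}$ also vanishes. Plugging $\phi_{,1}=\phi_{,2}=\phi_{,1;2}=0$ into \eqref{formula of Q only} gives $Q=0$, and then \eqref{formula of P only} (or more directly \eqref{relation between P and Q}) yields $P=0$. The conclusion $\overline{S}=S$ follows from Remark \ref{Rem:spray of F bar}\textbf{(a)}.

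For the converse \textbf{(i)} $\Rightarrow$ \textbf{(ii)}, I would assume $P=Q=0$. Substituting these into \eqref{relation between P and Q} immediately forces $F^2\phi_{,1}=0$, so $\phi_{,1}\equiv 0$, and therefore $\phi_{,1;2}\equiv 0$ as well. Using $Q=0$ in \eqref{formula of Q only}, and dividing by the nowhere-vanishing factor $\varepsilon\rho F^2$ (nonvanishing of $\rho$ being guaranteed by the standing assumption $\sigma+\varepsilon-(\phi_{;2})^2\neq 0$ in \eqref{the anisotropic conformal transformation2}), what remains is $-2\phi_{,2}=0$, whence $\phi_{,2}=0$. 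Thus $\delta_i\phi=0$, completing the equivalence.

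The only delicate point, and the place a careless argument could slip, is the step ``$\phi_{,1}\equiv 0 \Rightarrow \phi_{,1;2}\equiv 0$'': this needs the pointwise vanishing of $\phi_{,1}$ on the whole conic open set $\mathcal{A}$ in order to conclude that its vertical derivative is zero as well. Once this bookkeeping is in place, the proof is a short manipulation of \eqref{formula of Q only}--\eqref{relation between P and Q} in the modified Berwald frame, with no real analytic obstacle.
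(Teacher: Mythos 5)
Your proof is correct, and one half of it takes a genuinely different route from the paper's. For \textbf{(ii)} $\Rightarrow$ \textbf{(i)} you argue exactly as the paper does: decompose $\delta_i\phi=\phi_{,1}\ell_i+\phi_{,2}m_i$ via Lemma \ref{properties.2.dim.Fins}, contract with $\ell^i$ and $m^i$ to get $\phi_{,1}=\phi_{,2}=0$, and feed this into \eqref{formula of Q only} and \eqref{formula of P only}; your explicit observation that $\phi_{,1}\equiv 0$ on all of $\mathcal{A}$ forces $\phi_{,1;2}\equiv 0$ is a point the paper leaves implicit, and it is genuinely needed since $Q$ contains $\phi_{,1;2}$. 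For \textbf{(i)} $\Rightarrow$ \textbf{(ii)}, however, the paper never touches the formulas for $P$ and $Q$: it uses the geometric fact that $F$ is horizontally constant ($\delta_iF=0$), notes that $\overline{S}=S$ makes the horizontal derivations coincide ($\overline{\delta}_i=\delta_i$), and reads off $\overline{F}\,\delta_i\phi=\delta_i\overline{F}-e^{\phi}\delta_iF=0$. You instead run the algebra backwards: $P=Q=0$ in \eqref{relation between P and Q} gives $F^2\phi_{,1}=0$, hence $\phi_{,1}\equiv 0$ and so $\phi_{,1;2}\equiv 0$, and then $Q=0$ in \eqref{formula of Q only} kills $\phi_{,2}$ because the factor $\varepsilon\rho F^2$ is nowhere zero under the standing hypothesis in \eqref{the anisotropic conformal transformation2}. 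Both routes are sound; the paper's is shorter and more conceptual, while yours is self-contained at the level of the derived scalar formulas and makes the nondegeneracy assumption ($\rho\neq 0$) do visible work.
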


\begin{proof}
$\textbf{(i)}\Longrightarrow\textbf{(ii)}$:  $\overline{F}=e^{\phi}F$ implies
 \begin{equation}\label{delta F}
 \delta_{i}\overline{F}=\delta_{i}(e^{\phi}F)=e^{\phi}F\,\delta_{i}\phi +e^{\phi}\,\delta_{i}F=\overline{F}\delta_{i}\phi +e^{\phi}\,\delta_{i}F.
 \end{equation}
 It is known that $F$ is horizontally constant, that is,  $\delta_{i}F=0.$ By assumption $\overline{S}=S$ and so $\overline{\delta}_{i}=\delta_{i}$, then $$\delta_{i}\overline{F}=\overline{\delta}_{i}\overline{F}=0.$$
  Thereby, \eqref{delta F} gives $\overline{F}\delta_{i}\phi =0.$ Hence, $\delta_{i}\phi=0.$\vspace{6pt}\\
  $\textbf{(ii)}\Longrightarrow\textbf{(i)}$:
Suppose $\delta_{i}\phi=0$. Thus, Lemma \ref{properties.2.dim.Fins} $\textbf{(vii)}$ gives 
\begin{align}\label{delt_i=0}
\phi_{,1}\ell_{i}+\phi_{,2}m_{i}=0.
\end{align}
Contracting both sides of \eqref{delt_i=0} by $\ell^i$ and $m^i$, respectively, we obtain  $\phi_{,1}=\phi_{,2}=0$. Then, $P=0$ and $Q=0$  by \eqref{formula of Q only} and \eqref{formula of P only}. From \eqref{coeff of sprayof F bar},  $\overline{G}^i=G^i$ and so $\overline{S}=S$.  
\end{proof}

\begin{corollary}\label{invariants}
Let $(M,F)$ be a conic pseudo-Finsler surface.  Under the anisotropic conformal transformation \eqref{the anisotropic conformal transformation2},  if $\delta_{i}\phi=0$, then the Barthel connection $G^{i}_{j}$ and Berwald connection $G^{i}_{jk}$ are invariant.
\end{corollary}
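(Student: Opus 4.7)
The plan is to derive this corollary as an almost immediate consequence of Theorem~\ref{phi dh-closed}, combined with the observation from Section~1 that the Barthel and Berwald coefficients are, respectively, the first and second vertical derivatives of the spray coefficients $G^i$.

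First, I would invoke Theorem~\ref{phi dh-closed}: the hypothesis $\delta_i\phi=0$ is equivalent to $P=Q=0$ identically on $\mathcal{A}$, and formula \eqref{coeff of sprayof F bar} then yields $\overline{G}^i=G^i$ on all of $\mathcal{A}$.

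Since the preliminaries define $G^j_i:=\dot{\partial}_i G^j$ and $G^h_{ij}:=\dot{\partial}_i G^h_j$, taking one and two vertical derivatives of the identity $\overline{G}^i=G^i$ gives
\[
\overline{G}^i_j = \dot{\partial}_j\overline{G}^i = \dot{\partial}_j G^i = G^i_j, \qquad \overline{G}^i_{jk} = \dot{\partial}_k\overline{G}^i_j = \dot{\partial}_k G^i_j = G^i_{jk}.
\]
As a sanity check, one can reach the same conclusion by direct substitution into Propositions~\ref{transform of Barthal } and~\ref{transform of berwald connection}: once $P$ and $Q$ vanish identically, all their scalar derivatives $P_{;2}, Q_{;2}, P_{;2;2}, Q_{;2;2}$ vanish as well, so every correction term on the right-hand side of \eqref{barthel coefficient} and \eqref{berwald connection} disappears and we recover the claimed equalities.

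There is no genuine obstacle in this argument; the corollary is a clean re-packaging of Theorem~\ref{phi dh-closed} through the chain Spray $\to$ Barthel $\to$ Berwald. The only small point worth flagging is that $P\equiv Q\equiv 0$ on the whole conic domain $\mathcal{A}$ automatically forces the vanishing of all the derived scalars $P_{;2}, Q_{;2}, P_{;2;2}, Q_{;2;2}$ appearing in the transformation formulas, so no further computation is required beyond invoking the previous theorem.
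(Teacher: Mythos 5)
Your proposal is correct and follows exactly the route the paper intends: the corollary is stated without proof precisely because it is the immediate consequence of Theorem~\ref{phi dh-closed} (giving $P=Q=0$, hence $\overline{G}^i=G^i$) combined with either vertical differentiation or the vanishing of all correction terms in Propositions~\ref{transform of Barthal } and~\ref{transform of berwald connection}. No gaps; your remark that $P\equiv Q\equiv 0$ forces the vanishing of $P_{;2},Q_{;2},P_{;2;2},Q_{;2;2}$ is the only point needing mention, and you made it.
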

 
\begin{proposition}
Let $(M,F)$ be a conic pseudo-Finsler surface.  Under the anisotropic conformal transformation \eqref{the anisotropic conformal transformation2}, the following assertions are equivalent:
\begin{description}
\item[(i)]$\overline{G}^{i}=G^{i}.$ \qquad \qquad\em{\textbf{(ii) }}$\overline{G}^{i}_{j}=G^{i}_{j}.$ \qquad\qquad \textbf{(iii)} $\overline{G}^{i}_{jk}=G^{i}_{jk}.$
\end{description}
\end{proposition}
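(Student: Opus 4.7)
The plan is to bypass the explicit formulae of Propositions \ref{transformation of spray}, \ref{transform of Barthal} and \ref{transform of berwald connection} altogether and argue purely from homogeneity. Recall that $G^i$ is $h(2)$ in $y$ and, by definition, $G^i_j = \dot{\partial}_j G^i$ and $G^i_{jk} = \dot{\partial}_k G^i_j$; hence $G^i_j$ is $h(1)$ and $G^i_{jk}$ is $h(0)$, and the analogous homogeneity orders hold for the barred objects $\overline{G}^i$, $\overline{G}^i_j$, $\overline{G}^i_{jk}$. The forward chain $\textbf{(i)} \Rightarrow \textbf{(ii)} \Rightarrow \textbf{(iii)}$ is then immediate by successive differentiation with respect to the fibre coordinates: if $\overline{G}^i = G^i$ then $\overline{G}^i_j := \dot{\partial}_j \overline{G}^i = \dot{\partial}_j G^i = G^i_j$, and one further application of $\dot{\partial}_k$ yields $\overline{G}^i_{jk} = G^i_{jk}$.

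The reverse implications use Euler's homogeneity relation. For $\textbf{(ii)} \Rightarrow \textbf{(i)}$, contract both sides of $\overline{G}^i_j = G^i_j$ with $y^j$ and apply $y^j G^i_j = 2G^i$, together with its barred analogue $y^j \overline{G}^i_j = 2\overline{G}^i$, to conclude $\overline{G}^i = G^i$. Similarly, $\textbf{(iii)} \Rightarrow \textbf{(ii)}$ is obtained by contracting $\overline{G}^i_{jk} = G^i_{jk}$ with $y^k$ and invoking $y^k G^i_{jk} = G^i_j$ (and the barred version). No step presents any genuine obstacle; the content of the proposition is simply that the anisotropic conformal corrections at the three successive levels (spray, Barthel connection, Berwald connection) are rigidly linked by homogeneity, so they must vanish simultaneously.
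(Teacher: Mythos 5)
Your proof is correct, but the reverse implications are handled by a genuinely different route than the paper's. For $\textbf{(iii)}\Rightarrow\textbf{(i)}$ the paper works from the explicit transformation formula \eqref{berwald connection}: setting $\overline{G}^i_{jk}=G^i_{jk}$ there, contracting the resulting identity with $\ell_i\ell^j\ell^k$ and $m_i\ell^j\ell^k$ forces $P=Q=0$, and then \eqref{coeff of sprayof F bar} (via Theorem \ref{phi dh-closed}) gives $\overline{G}^i=G^i$. You instead bypass all of the computed formulae and use only Euler's relations $y^jG^i_j=2G^i$ and $y^kG^i_{jk}=G^i_j$, valid for both sprays since $G^i$ and $\overline{G}^i$ are $h(2)$ (the latter because \eqref{the anisotropic conformal transformation2} guarantees $\overline{F}$ is a conic pseudo-Finsler metric with a well-defined geodesic spray). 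Your argument is more elementary and strictly more general: it is dimension-independent, needs no Berwald frame, and in fact proves the equivalence for any two sprays whatsoever, not just anisotropically conformally related ones. What the paper's computation buys in exchange is the explicit identification of the obstruction as the pair $(P,Q)$, which ties this proposition to the $d_h$-closedness characterization of Theorem \ref{phi dh-closed}; your homogeneity argument establishes the equivalence without exhibiting what the common vanishing quantity is.
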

\begin{proof}
\textbf{(i)}$\Longrightarrow$\textbf{(ii)} and \textbf{(ii)}$\Longrightarrow$\textbf{(iii)} follow directly by  differentiating $G^i$ with respect to $y^{j}$ and $G^i_j$ with respect to $y^{k},$ respectively.\\
\textbf{(iii)}$\Longrightarrow$\textbf{(i)}: From \eqref{berwald connection}, $\overline{G}^{i}_{jk}=G^{i}_{jk}$ implies that
\begin{align}\label{assertions of gedesic coeff, barthal berwald conn invariant}
0&=(2P\ell^{i}+2Qm^{i})\ell_{j}\ell_{k}+\{(P_{;2}-Q)\ell^{i}+(\varepsilon P+ Q_{;2}-\varepsilon\mathcal{I} Q)m^{i}\}(\ell_{j}m_{k}+\ell_{k}m_{j})+\{(\varepsilon P\nonumber\\
 &+P_{;2;2}-2Q_{;2}+\varepsilon\mathcal{I}P_{;2})\ell^{i}+(2\varepsilon P_{;2}
  +\varepsilon Q+ Q_{;2;2}-\varepsilon\mathcal{I}_{;2}Q-\varepsilon\mathcal{I} Q_{;2})m^{i}\}m_{j}m_{k}.
\end{align}
Contracting \eqref{assertions of gedesic coeff, barthal berwald conn invariant} by $\ell_{i}\ell^{j}\ell^{k}$ and $m_{i}\ell^{j}\ell^{k},$ respectively, we get $Q=P=0$.  Hence, by Theorem \ref{phi dh-closed},  the coefficients of the geodesic spray  are invariant. 
  \end{proof}

The following example provides a non-trivial anisotropic conformal transformation which leaves the geodesic spray invariant.  A Maple's code of the detailed calculations is found in the Appendix at the end of the paper.

\begin{example}\label{example 2}
Let $M=\mathbb{B}^2\subset\mathbb{R}^2$, $x \in M,\; y\in T_{x}\mathbb{B}^{2}\cong \mathbb{R}^{2},\; a=(a_{1},a_{2})\in\mathbb{R}^{2} $ and $a$ is constant vector with $|a|<1$. Let
$$\displaystyle{z^i=\frac{(1+\langle a,x\rangle)y^i-\langle a,y \rangle x^i}{\langle a,y \rangle}}.$$
Define the Finsler metric $F$ by
$$F=\frac{\langle a,y\rangle \sqrt{(z^1)^2+(z^2)^2}}{(1+\langle a,x\rangle)^2}. $$
The  geodesic spray coefficients are given by
$$
G^i=-\frac{\langle a,y \rangle}{1+\langle a,x \rangle}y^i.
$$
Now, let\, $\overline{ F}= e^{\phi} F=\dfrac{\langle a,y\rangle \sqrt{(z^1)^2+(z^2)^2}}{(1+\langle a,x\rangle)^2} \ \exp\left( \sqrt{(z^1)^2+(z^2)^2}\right), $
where $\phi=\sqrt{(z^1)^2+(z^2)^2}.$ \\
One can easily check that $\overline{ F}$ satisfies Theorem \emph{\ref{ness.suff.codtion.F.bar.Finsler}} (which means that $\overline{F}$ is a pseudo-Finsler metric) and $\delta_{i}\phi=0$. Furthermore,
$$\overline{G}^i=G^i=-\frac{\langle a,y \rangle}{1+\langle a,x \rangle}\,y^i.$$
\end{example}

\section{Anisotropic conformal and Projective changes}
\begin{definition}\emph{\cite{gendi}}
Two sprays $S$ and $\hat{S}$on  $M$ are projectively  equivalent (or related) if there exists an $h(1)$-function $\mathcal{P}: TM \longrightarrow \mathbb{R}$ such that
  $\hat{S}=S-2\mathcal{P} \,\mathcal{C}.$ The function $\mathcal{P}$ is called the projective factor.  Locally, $\hat{G}^{i}=G^{i}+\mathcal{P} y^{i}.$
\end{definition}
The association $S\longmapsto\hat{S}=S-2\mathcal{P} \,\mathcal{C}$ or $G\longmapsto\hat{G}^{i}=G^{i}+\mathcal{P} y^{i}$ is said to be projective transformation (or projective change).

\begin{theorem}\label{projectively equivalent}
  Under the anisotropic conformal transformation \eqref{the anisotropic conformal transformation2}, the following assertions are equivalent:
\begin{description}
\item[(i)]the geodesic sprays $S$ and $\overline{S}$ are projectively equivalent,
\item[(ii)]$\phi_{;2}\phi_{,1}+\phi_{,1;2}=2\phi_{,2},$
\item[(iii)]$\overline{G}^{i}=G^{i}+\frac{1}{2} F\phi_{,1}y^i.$
\end{description}  
\end{theorem}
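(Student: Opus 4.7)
The plan is to read everything off from the compact spray formula \eqref{coefficet of spray 2}, namely
\begin{equation*}
\overline{G}^i=G^i+Q\,m^i+\bigl(\tfrac12 F^{2}\phi_{,1}-\varepsilon\phi_{;2}\,Q\bigr)\ell^i,
\end{equation*}
together with the linear independence of the frame pair $(\ell^i,m^i)$. The key observation is that $y^i=F\ell^i$ has no $m^i$-component, so projective equivalence forces the $m^i$-coefficient of $\overline G^i-G^i$ to vanish.

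For (i)$\Rightarrow$(ii), I would start from the assumption $\overline G^i=G^i+\mathcal P\,y^i=G^i+F\mathcal P\,\ell^i$. Subtracting from \eqref{coefficet of spray 2} and using that $(\ell^i,m^i)$ is a frame, the $m^i$-coefficient must vanish: $Q=0$. By \eqref{formula of Q only}, $Q=\tfrac12\varepsilon\rho F^{2}\bigl(\phi_{;2}\phi_{,1}+\phi_{,1;2}-2\phi_{,2}\bigr)$, and the hypothesis \eqref{the anisotropic conformal transformation2} together with $F\neq 0$ on $\mathcal A$ guarantees $\varepsilon\rho F^{2}\neq 0$, so $\phi_{;2}\phi_{,1}+\phi_{,1;2}=2\phi_{,2}$, which is (ii).

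For (ii)$\Rightarrow$(iii), if (ii) holds then $Q=0$ by \eqref{formula of Q only}, and \eqref{coefficet of spray 2} collapses to
\begin{equation*}
\overline G^i = G^i+\tfrac12 F^{2}\phi_{,1}\,\ell^i = G^i+\tfrac12 F\phi_{,1}\,y^i,
\end{equation*}
using $y^i=F\ell^i$; this is exactly (iii). Finally, (iii)$\Rightarrow$(i) is immediate: (iii) is the local form of projective equivalence with projective factor $\mathcal P=\tfrac12 F\phi_{,1}$, and since $\phi$ is $h(0)$ (so $\phi_{,1}=(\delta_i\phi)\ell^i$ is $h(0)$) while $F$ is $h(1)$, the factor $\mathcal P$ has the required homogeneity degree one.

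The proof is essentially routine once \eqref{coefficet of spray 2} and \eqref{formula of Q only} are available; there is no serious obstacle. The only point requiring a little care is observing that $\varepsilon\rho F^{2}\neq 0$ on $\mathcal A$, so that vanishing of $Q$ is genuinely equivalent to the bracket $\phi_{;2}\phi_{,1}+\phi_{,1;2}-2\phi_{,2}$ being zero, and verifying the homogeneity of the projective factor in the last implication.
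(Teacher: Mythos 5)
Your proposal is correct and follows essentially the same route as the paper: both arguments decompose $\overline{G}^i-G^i$ in the frame $(\ell^i,m^i)$, identify projective equivalence with the vanishing of the $m^i$-coefficient $Q$, translate $Q=0$ into condition (ii) via \eqref{formula of Q only}, and then read off (iii) from the surviving $\ell^i$-term. Your added checks that $\varepsilon\rho F^2\neq 0$ and that the projective factor $\tfrac12 F\phi_{,1}$ is $h(1)$ are minor refinements of the paper's argument, not a different method.
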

\begin{proof}
 From \eqref{the anisotropic conformal transformation2} and \eqref{coeff of sprayof F bar}, the two geodesic sprays $S$ and $\bar{S}$ are projectively equivalent if and only if $\overline{G}^{i}=G^{i}+\mathcal{P} y^{i} $ where  $\mathcal{P}=\dfrac{P}{F}$ and $Q=0$. This is equivalent to
  $\phi_{;2}\phi_{,1}+\phi_{,1;2}-2\phi_{,2}=0,$ by  \eqref{formula of Q only}. 
In this case \eqref{formula of P only} takes the form
$2P=F^2\phi_{,1}$ and hence  the geodesic spray of $\overline{F}$ is given by
\begin{equation*}
\overline{G}^{i}=G^{i}+\frac{1}{2} F\phi_{,1}\;y^i.
\end{equation*}
\vspace*{-1.1cm}\[\qedhere\]
\end{proof}

In view of Theorem \ref{projectively equivalent}, we have
\begin{corollary}
Under the anisotropic conformal transformation \eqref{the anisotropic conformal transformation2} with $\phi_{,1}=0$, the coefficients of the geodesic spray are invariant if and only if the two geodesic sprays $S$ and $\bar{S}$ are projectively equivalent. 
\end{corollary}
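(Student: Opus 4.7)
The plan is to derive the corollary as a direct consequence of Theorem \ref{projectively equivalent}, using the extra hypothesis $\phi_{,1}=0$ to collapse the projective term.

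First, I would prove the ``$\Leftarrow$'' direction. Assume that $S$ and $\overline{S}$ are projectively equivalent. By Theorem \ref{projectively equivalent}\,\textbf{(iii)}, this is equivalent to
\[
\overline{G}^{i}=G^{i}+\tfrac{1}{2}F\phi_{,1}\,y^{i}.
\]
Substituting the hypothesis $\phi_{,1}=0$ into this relation immediately yields $\overline{G}^{i}=G^{i}$, so the geodesic spray coefficients are invariant.

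For the ``$\Rightarrow$'' direction, assume $\overline{G}^{i}=G^{i}$. Then trivially $\overline{G}^{i}=G^{i}+\mathcal{P}\,y^{i}$ with the choice $\mathcal{P}\equiv 0$, so $S$ and $\overline{S}$ are (trivially) projectively equivalent in the sense of the definition preceding Theorem \ref{projectively equivalent}.

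There is essentially no obstacle here; the corollary is a direct specialization of Theorem \ref{projectively equivalent} under the additional restriction $\phi_{,1}=0$. The only thing worth double-checking is the consistency with Theorem \ref{projectively equivalent}\,\textbf{(ii)}: when $\phi_{,1}=0$, the equivalent condition $\phi_{;2}\phi_{,1}+\phi_{,1;2}=2\phi_{,2}$ reduces to $\phi_{,2}=0$, which together with $\phi_{,1}=0$ means $\delta_{i}\phi=0$ (by Lemma \ref{properties.2.dim.Fins}\,\textbf{(viii)}). This is exactly the invariance condition of Theorem \ref{phi dh-closed}, so the corollary is internally consistent with the earlier results of the paper and can be stated in one short paragraph per implication.
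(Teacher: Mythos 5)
Your proof is correct and takes essentially the same route as the paper, which offers no separate argument but presents the corollary as an immediate specialization of Theorem \ref{projectively equivalent}: with $\phi_{,1}=0$ assertion \textbf{(iii)} of that theorem collapses to $\overline{G}^{i}=G^{i}$, and the converse is the trivial observation that equal sprays are projectively equivalent with $\mathcal{P}=0$. Your closing consistency check against Theorem \ref{phi dh-closed} is a pleasant extra but not required.
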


\vspace*{.4cm}

\begin{definition}\emph{\cite{chernbook2006}}
A conic pseudo-Finsler metric $F=F(x,y) $  on an open subset $U \subset \mathbb{R}^{n}$ is said to be projectively flat  if any one of the following equivalent conditions is satisfied:
\begin{description}
\item[(i)]the geodesics of F  are straight line segments in $U$,
\item[(ii)]$y^{j}\partial_{j}\dot{\partial}_{i} F=\partial_{i}F$ or equivalently $ \dot{\partial}_{i}(y^{j}\partial_{j} F)=2\partial_{i}F,$
\item[(iii)]the geodesic spray coefficients $G^i$ are of the  form $G^{i}=\mathcal{P} y^i$, with the  projective factor $\mathcal{P}$ is given by $\mathcal{P}=\dfrac{y^k\partial_{k}F}{2F}$.
\end{description}  
\end{definition}

 \begin{theorem}\label{projetively flat}
Under the anisotropic conformal transformation \eqref{the anisotropic conformal transformation2}, we have the following:
\begin{description}
\item[(i)]a necessary condition for $\overline{F}$ to be  projectively flat is that\, $Q+\varepsilon G^k m_k=0.$
\item[(ii)]a sufficient condition for $\overline{F}$ to be  projectively flat is that\, $F \partial_{j}\phi+\partial_{j}F=0$. 
\end{description} 
\end{theorem}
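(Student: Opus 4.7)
The plan is to exploit the explicit decomposition of $\overline{G}^i$ in the modified Berwald frame $\{\ell^i, m^i\}$ provided by \eqref{coefficet of spray 2}, isolate its $\ell^i$ and $m^i$ components, and compare with the characterization $\overline{G}^i = \overline{\mathcal{P}}\, y^i$ of projective flatness (condition \textbf{(iii)} of the projective-flatness definition).

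For part \textbf{(i)}, I would first expand the original spray coefficients in the frame. Using $\delta^i_j = \ell_j\ell^i + \varepsilon m_j m^i$ from Lemma \ref{properties.2.dim.Fins}\,\textbf{(iii)}, one has
\[ G^i = (G^k\ell_k)\,\ell^i + \varepsilon\, (G^k m_k)\, m^i. \]
Substituting this into \eqref{coefficet of spray 2} and collecting terms gives
\[ \overline{G}^i = \bigl[G^k\ell_k + \tfrac{1}{2}F^2\phi_{,1} - \varepsilon \phi_{;2}\, Q\bigr]\ell^i + \bigl[Q + \varepsilon\, G^k m_k\bigr]\, m^i. \]
Since $y^i = F\ell^i$, the requirement $\overline{G}^i = \overline{\mathcal{P}}\, y^i$ forces the $m^i$-coefficient to vanish by the linear independence of $\ell^i$ and $m^i$ (guaranteed by the non-degeneracy of the frame). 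This immediately yields the necessary condition $Q + \varepsilon\, G^k m_k = 0$.

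For part \textbf{(ii)}, I would observe the elementary identity
\[ \partial_j \overline{F} \;=\; \partial_j(e^\phi F) \;=\; e^\phi\bigl(F\partial_j \phi + \partial_j F\bigr), \]
so the hypothesis $F\partial_j \phi + \partial_j F = 0$ is exactly $\partial_j \overline{F} = 0$, i.e.\ $\overline{F}$ is independent of $x$. Then $\partial_k \overline{F}^2 = 2\overline{F}\,\partial_k\overline{F} = 0$, so also $y^k\dot{\partial}_j\partial_k \overline{F}^2 = 0$, and the defining formula for the geodesic spray of $\overline{F}$ collapses to
\[ \overline{G}^i = \tfrac{1}{4}\,\overline{g}^{ij}\bigl(y^k\dot{\partial}_j\partial_k\overline{F}^2 - \partial_j\overline{F}^2\bigr) = 0. \]
Hence $\overline{G}^i = 0\cdot y^i$, and $\overline{F}$ is (trivially) projectively flat with vanishing projective factor.

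There is no real obstacle here; the argument is essentially bookkeeping once \eqref{coefficet of spray 2} is available. The only point requiring care is the frame-decomposition step in \textbf{(i)}: one must correctly use the pseudo-orthonormality relations of the modified Berwald frame to extract the $m^i$-component. For \textbf{(ii)}, the key observation is simply to recognize the hypothesis as $\partial_j\overline{F}=0$, after which the vanishing of $\overline{G}^i$ is automatic.
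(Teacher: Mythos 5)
Your proof is correct, and for part \textbf{(i)} it takes a genuinely different (and slightly slicker) route than the paper. The paper works with the Hamel-type characterization of projective flatness, computing $\dot{\partial}_{i}(y^{j}\partial_{j}\overline{F})-2\partial_{i}\overline{F}$ in the modified Berwald frame (their \eqref{F bar proj flat nessc}) and then contracting with $m^i$ to land on $\tfrac{2}{F\rho}(Q+\varepsilon G^k m_k)=0$; you instead start from the already-established spray formula \eqref{coefficet of spray 2}, decompose $G^i=(G^k\ell_k)\ell^i+\varepsilon(G^k m_k)m^i$ via Lemma \ref{properties.2.dim.Fins}\,\textbf{(iii)}, and read off that projective flatness ($\overline{G}^i=\overline{\mathcal{P}}\,y^i\propto\ell^i$) kills the $m^i$-coefficient $Q+\varepsilon G^k m_k$. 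Both arguments hinge on extracting the $m$-component in the frame, but yours avoids redoing the derivative computation and makes the geometric content transparent: the condition says exactly that the anisotropic deformation of the spray stays in the $\ell$-direction. The trade-off is that the paper's computation of \eqref{F bar proj flat nessc} is reused verbatim for the dually-flat analogue (Theorem \ref{dually flat}), so their route amortizes the work. For part \textbf{(ii)} your observation that the hypothesis is precisely $\partial_j\overline{F}=0$, whence $\overline{G}^i=0$, is a cleaner packaging of the paper's remark that every term of \eqref{F bar proj flat nessc} carries a factor of $F\partial_j\phi+\partial_j F$ or its $;2$-derivative; both are valid.
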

\begin{proof}

Since $\overline{F}=e^{\phi}F$, we get
\begin{equation}\label{F bar deriv w.r.t x^i}
\partial_{j}\overline{F}=e^{\phi}[F \partial_{j}\phi+\partial_{j}F]
\end{equation}
  and\, 
$y^{j}\partial_{j}\overline{F}=e^{\phi}[F^{2} (\partial_{j}\phi)\ell^{j}+F(\partial_{j}F)\ell^{j}].$
Thereby,
\begin{align}\label{F bar deriv w.r.t x^i and y}
\dot{\partial}_{i}(y^{j}\partial_{j} \overline{F})&=e^{\phi}[\{\phi_{;2}F \partial_{j}\phi+\phi_{;2}\partial_{j}F+F (\partial_{j}\phi)_{;2}+(\partial_{j}F)_{;2}\}\ell^{j}m_{i}+\{2F \partial_{j}\phi+2\partial_{j}F\}\ell_{i}\ell^{j}\nonumber\\
&\;\;\;+\{\varepsilon F \partial_{j}\phi+\varepsilon \partial_{j}F\}m^{j}m_{i}].
\end{align}
From \eqref{F bar deriv w.r.t x^i} and \eqref{F bar deriv w.r.t x^i and y}, we get
\begin{align}\label{F bar proj flat nessc}
\dot{\partial}_{i}(y^{j}\partial_{j} \overline{F})-2\partial_{i}\overline{F}=&e^{\phi}[\{\phi_{;2}(F \partial_{j}\phi+\partial_{j}F)+(F \partial_{j}\phi+\partial_{j}F)_{;2}\}\ell^{j}m_{i}+2(F \partial_{j}\phi+\partial_{j}F)\ell_{i}\ell^{j}\nonumber\\
&+\varepsilon (F \partial_{j}\phi+\partial_{j}F)m^{j}m_{i}-2(F\partial_{i}\phi+\partial_{i}F)].
\end{align}

\textbf{(i)} Let $\overline{F}$ be projectively flat, i.e., $\dot{\partial}_{i}(y^{j}\partial_{j} \overline{F})-2\partial_{i}\overline{F}=0$. 
Then, the RHS of \eqref{F bar proj flat nessc} vanishes. Contracing the later by $m^i$ and using \eqref{eq}, we obtain $$\frac{2}{F\rho}(Q+\varepsilon G^k m_k)=0,$$ which implies that $Q+\varepsilon G^k m_k=0.$

\textbf{(ii)} The proof is obtained directly from \eqref{F bar proj flat nessc}.

\vspace*{-1.1cm}\[\qedhere\] 
\end{proof}

\begin{remark}\label{remark equiv of F proje flat}
\emph{\textbf{(a)}} \emph{\cite[\S 3.1]{Matsumoto 2003}} Since  $0=\delta_i F=\partial_i F-G^r_i\ell_r$, we get
\begin{equation}\label{Eq:G^i ell_i}
2G^i\ell_i=y^i\partial_iF \text{ and }\,  2G^r m_r=\frac{\varepsilon F^2 \mathfrak{M}}{\mathrm{h}},
\end{equation}
where $\mathrm{h}:=\sqrt{\varepsilon\mathfrak{g}}$ and $\mathfrak{M}:=\dot{\partial}_{2}\partial_1F-\dot{\partial}_{1}\partial_2F$.
From \eqref{Eq:G^i ell_i}, we get
\begin{equation}\label{Eq:of coeff spray 2 dim}
 2G^i =y^r(\partial_r F)\ell^i+\frac{ F^2 \mathfrak{M}}{\mathrm{h}}m^i. 
\end{equation}

\emph{\textbf{(b)}} From \emph{\textbf{(a)}}, the conic pseudo-Finsler surface is projectively flat if and only if $G^i m_i=0$, which is equivalent to $\mathfrak{M}=\dot{\partial}_{2}\partial_1F-\dot{\partial}_{1}\partial_2F=0$ (Hammel equation in two-dimensional spaces) and the geodesic spray coefficients have the form $G^i =\frac{y^r(\partial_r F)}{2F}y^i.$ 
      
\end{remark}

\begin{theorem}\label{preserve flatenss}
Let $F $ be a projectively flat  conic pseudo-Finsler metric and let $\overline{F}=e^{\phi}F$ be the anisotropic conformal transformation  \eqref{the anisotropic conformal transformation2}. Then, the Finsler metric  $\overline{F}$ is projectively flat if and only if  $\phi_{;2}\phi_{,1}+\phi_{,1;2}-2\phi_{,2}=0.$ 
\end{theorem}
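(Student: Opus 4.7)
The theorem is essentially a corollary of Theorem \ref{projectively equivalent} once one unpacks what projective flatness means for both $F$ and $\overline{F}$. My plan would be to reduce everything to a statement about the difference $\overline{G}^i - G^i$ and use the decomposition along the Berwald frame $(\ell^i, m^i)$.

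Here is the skeleton. First, recall that by definition $F$ is projectively flat iff $G^i = \mathcal{P}\, y^i = \mathcal{P} F \ell^i$ for some $h(1)$-function $\mathcal{P}$, and similarly $\overline{F}$ is projectively flat iff $\overline{G}^i = \overline{\mathcal{P}}\, y^i$. Next, from Theorem \ref{projectively equivalent}, the condition $\phi_{;2}\phi_{,1} + \phi_{,1;2} - 2\phi_{,2} = 0$ is precisely the condition that $S$ and $\overline{S}$ are projectively related, or equivalently (via \eqref{formula of Q only}) that $Q = 0$; in that case $\overline{G}^i = G^i + \tfrac12 F \phi_{,1}\, y^i$.

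For the ``if'' direction, assume $F$ is projectively flat so $G^i = \mathcal{P} y^i$, and assume $\phi_{;2}\phi_{,1} + \phi_{,1;2} - 2\phi_{,2} = 0$. Then Theorem \ref{projectively equivalent} gives
\begin{equation*}
\overline{G}^i = G^i + \tfrac12 F \phi_{,1}\, y^i = \bigl(\mathcal{P} + \tfrac12 F \phi_{,1}\bigr)\, y^i,
\end{equation*}
so $\overline{F}$ is projectively flat with projective factor $\overline{\mathcal{P}} = \mathcal{P} + \tfrac12 F \phi_{,1}$. This part is immediate.

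For the ``only if'' direction, assume both $F$ and $\overline{F}$ are projectively flat. Then $\overline{G}^i - G^i = (\overline{\mathcal{P}} - \mathcal{P})\, y^i = (\overline{\mathcal{P}} - \mathcal{P}) F\, \ell^i$ is parallel to $\ell^i$. On the other hand, Proposition \ref{transformation of spray} (in the form \eqref{coefficet of spray 2}) gives
\begin{equation*}
\overline{G}^i - G^i = Q\, m^i + \bigl(\tfrac12 F^2 \phi_{,1} - \varepsilon \phi_{;2}\, Q\bigr)\, \ell^i.
\end{equation*}
Since $\ell^i$ and $m^i$ are linearly independent (they form a frame), the $m^i$-component must vanish, i.e.\ $Q = 0$. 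By \eqref{formula of Q only} and the standing assumption $\rho \neq 0$ (which holds under \eqref{the anisotropic conformal transformation2}), this forces $\phi_{;2}\phi_{,1} + \phi_{,1;2} - 2\phi_{,2} = 0$, completing the proof.

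I do not expect any serious obstacle: the only subtlety is to notice that the $\ell^i$-component contributes the desired projective term automatically, so the nontrivial content is entirely in the vanishing of the $m^i$-component, which is exactly the characterization provided by Theorem \ref{projectively equivalent}.
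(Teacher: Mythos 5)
Your proof is correct and follows essentially the same route as the paper: both directions reduce to Theorem \ref{projectively equivalent} together with the frame decomposition \eqref{coefficet of spray 2} of $\overline{G}^{i}-G^{i}$. The only (harmless) shortcut is that you infer projective flatness of $\overline{F}$ from $\overline{G}^{i}\propto y^{i}$ alone without checking that the factor equals $\tfrac{y^{k}\partial_{k}\overline{F}}{2\overline{F}}$ --- this is justified by contracting with $\overline{\ell}_{i}$ (cf.\ Remark \ref{remark equiv of F proje flat}), and is precisely the verification the paper carries out explicitly using $G^{k}m_{k}=0$; in compensation, your \emph{only if} direction (forcing $Q=0$ from linear independence of $\ell^{i}$ and $m^{i}$) is spelled out more carefully than in the paper's own proof.
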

\begin{proof}
By Theorem \ref{projectively equivalent},  we have $\phi_{;2}\phi_{,1}+\phi_{,1;2}-2\phi_{,2}=0$  is equivalent to $\overline{G}^{i}=G^{i}+\frac{1}{2} F\phi_{,1}y^i$. Since $F$ is projectively flat, then we get
\begin{align}\label{F bar projectively flat 1}
\overline{G}^{i}=\dfrac{y^k\partial_{k}F+F^2\phi_{,1}}{2F} \,y^i.
\end{align}
 By using $\text{(a)}$  of \eqref{eq}, we obtain
\begin{equation}\label{projective factor}
\frac{y^k\partial_{k}\overline{F}}{2\overline{F}} =\frac{y^k\partial_{k}F+y^kF \partial_{k}\phi}{2F}=\frac{y^k\partial_{k}F+F^2\phi_{,1}+2G^k\phi_{;2}\;m_k}{2F}=\frac{y^k\partial_{k}F+F^2\;\phi_{,1}}{2F},
\end{equation}
since $G^k\;m_k=0$, by Remark \ref{remark equiv of F proje flat}.  
  From \eqref{F bar projectively flat 1} and \eqref{projective factor}, we get
$\overline{G}^{i} =\overline{\mathcal{P}}\,y^i$,
with \,\,  $\overline{\mathcal{P}}=\dfrac{y^k\partial_{k}\overline{F}}{2\overline{F}}$.\\ This is equivalent to the projective flatness of the metric $\overline{F}$.
\end{proof}

\begin{proposition}
Let $(M,F)$ be a conic pseudo-Finsler surface and let $\overline{F}=e^{\phi}F$ be the anisotropic conformal transformation  \eqref{the anisotropic conformal transformation2} with $\phi_{;2}\phi_{,1}+\phi_{,1;2}-2\phi_{,2}=0.$ Assume that $\overline{F} $ is projectively flat. Then, the Finsler metric $F$ is projectively flat if and only if either $G^km_k=0$ or the conformal factor $\phi$ is a function of $x$ only.   
\end{proposition}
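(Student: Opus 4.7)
The plan is to combine the projective equivalence of the sprays $S$ and $\overline{S}$ (forced by the assumed identity on $\phi$ via Theorem~\ref{projectively equivalent}) with the explicit form of the projective factor of $\overline{F}$ (forced by the projective flatness of $\overline{F}$), and then extract a single scalar condition equivalent to the projective flatness of $F$.

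First, under the hypothesis $\phi_{;2}\phi_{,1}+\phi_{,1;2}-2\phi_{,2}=0$, Theorem~\ref{projectively equivalent} gives $\overline{G}^i=G^i+\frac{1}{2}F\phi_{,1}\,y^i$. Second, since $\overline{F}$ is projectively flat, condition~(iii) in the definition of projective flatness gives $\overline{G}^i=\overline{\mathcal{P}}\,y^i$ with $\overline{\mathcal{P}}=y^k\partial_k\overline{F}/(2\overline{F})$. Writing $\overline{F}=e^{\phi}F$, one immediately obtains $\overline{\mathcal{P}}=(y^k\partial_k F+Fy^k\partial_k\phi)/(2F)$, and identity~(a) of~\eqref{eq}, namely $F^2\ell^k\partial_k\phi=F^2\phi_{,1}+2G^k\phi_{;2}\,m_k$, rewrites the directional term so as to bring in the scalar $G^km_k$, giving $\overline{\mathcal{P}}=(y^k\partial_k F+F^2\phi_{,1}+2\phi_{;2}(G^km_k))/(2F)$. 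Equating the two expressions for $\overline{G}^i$ and cancelling the common $\frac{1}{2}F\phi_{,1}\,y^i$ yields the key relation $G^i=\mathcal{P}^F\,y^i+\frac{\phi_{;2}(G^km_k)}{F}\,y^i$, where $\mathcal{P}^F=y^k\partial_k F/(2F)$.

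Applying the same characterization of projective flatness to $F$ itself, $F$ is projectively flat if and only if $G^i=\mathcal{P}^F\,y^i$, which, in view of the key relation, is equivalent to $\phi_{;2}\,(G^km_k)=0$, i.e., either $\phi_{;2}=0$ or $G^km_k=0$. Since $\phi$ is $h(0)$, Lemma~\ref{properties.2.dim.Fins}(vi) reduces to $F\dot{\partial}_i\phi=\phi_{;2}\,m_i$, so $\phi_{;2}=0$ is equivalent to $\dot{\partial}_i\phi=0$, which in turn is equivalent to $\phi$ being a function of $x$ only. This establishes the stated equivalence. The only substantive step is the application of identity~(a) of~\eqref{eq} to express $y^k\partial_k\phi$ in a way that makes $G^km_k$ appear in $\overline{\mathcal{P}}$; once this is done, the remainder is purely algebraic.
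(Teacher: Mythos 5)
Your proposal is correct and follows essentially the same route as the paper: invoke Theorem~\ref{projectively equivalent} to get $\overline{G}^i=G^i+\tfrac12 F\phi_{,1}y^i$, use the projective flatness of $\overline{F}$ together with identity (a) of \eqref{eq} to rewrite $G^i=\frac{y^k\partial_kF+2\phi_{;2}G^km_k}{2F}\,y^i$, and read off that projective flatness of $F$ is equivalent to $\phi_{;2}\,G^km_k=0$. Your extra sentence justifying that $\phi_{;2}=0$ is equivalent to $\phi$ depending on $x$ only (via $F\dot{\partial}_i\phi=\phi_{;2}m_i$) is a welcome detail the paper leaves implicit.
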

\begin{proof}
From Theorem \ref{projectively equivalent},  we have $\phi_{;2}\phi_{,1}+\phi_{,1;2}-2\phi_{,2}=0$  is equivalent to $\overline{G}^{i}=G^{i}+\frac{1}{2} F\phi_{,1}y^i$. Since $\overline{F}$ is projectively flat,  we obtain
\begin{align*}
G^{i}=\overline{G}^{i}-\frac{1}{2} F\phi_{,1}\;y^i=\frac{y^k\partial_{k}F+y^k F\partial_{k}\phi-F^2\;\phi_{,1}}{2F} \;y^i.
\end{align*}
 By using $\text{(a)}$  of \eqref{eq}, we get
 \begin{align}\label{F projectivelly flat}
G^{i}=\frac{y^k\partial_{k}F+2G^k\; \phi_{;2}\;m_k}{2F} \;y^i.
\end{align}
From \eqref{F projectivelly flat}, $F$ is projectively flat if and only if $G^k \phi_{;2}m_k=0,$ which is equivalent to  either $G^km_k=0$ or the conformal factor $\phi$ is a function of $x$ only.  
\end{proof}

\begin{definition}\emph{\cite{chengshenzhou-on class f dually}}
A conic pseudo-Finsler metric $F=F(x,y) $  on an open subset $U \subset \mathbb{R}^{n}$ is said to be dually flat  if it satisfies the following equations:
$$y^{j}\dot{\partial}_{i}\partial_{j} F^2=2\partial_{i}F^2.$$ 
\end{definition}

\begin{theorem}\label{dually flat}
Under the anisotropic conformal transformation \eqref{the anisotropic conformal transformation2}, the following hold:
\begin{description}
\item[(i)]a necessary condition for $\overline{F}$ to be dually flat is that $$\frac{2}{F\rho}( Q+\varepsilon  G^k m_k)+\frac{2\varepsilon\phi_{;2}}{F}G^k m_k-G^i_k\ell_i m^k-\phi_{;2}G^i_k m_im^k+\varepsilon F(\phi_{;2}\phi_{,1}-\phi_{,2})=0.$$
\item[(ii)] a sufficient condition for $\overline{F}$ to be dually flat is that $F \partial_{j}\phi+\partial_{j}F=0$. 
\end{description} 
\end{theorem}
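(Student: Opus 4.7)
The plan is to compute the dually-flat expression $y^j\dot{\partial}_i\partial_j\overline{F}^2-2\partial_i\overline{F}^2$ directly from $\overline{F}^2=e^{2\phi}F^2$ and then read off both implications. A key observation is that the quantity $y^k\dot{\partial}_j\partial_k\overline{F}^2-\partial_j\overline{F}^2$ has already been evaluated during the derivation of the transformed geodesic spray (equation \eqref{G_k coeff 2 form}). Subtracting one further copy of $\partial_j\overline{F}^2=2Fe^{2\phi}(F\partial_j\phi+\partial_j F)$ should yield
\[
y^k\dot{\partial}_j\partial_k\overline{F}^2-2\partial_j\overline{F}^2 = e^{2\phi}\bigl[\{4\phi_{;2}(F^2\partial_k\phi+F\partial_k F)+2F(F\partial_k\phi+\partial_k F)_{;2}\}m_j\ell^k + 4F(F\partial_k\phi+\partial_k F)\ell_j\ell^k - 4F(F\partial_j\phi+\partial_j F)\bigr].
\]
This is the master identity from which both (i) and (ii) will drop out.

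For (ii), assume $F\partial_j\phi+\partial_jF=0$ identically. Then every term in the bracket of the master identity vanishes, since the $(\cdot)_{;2}$ operator of the zero function is zero; consequently $\overline{F}$ is dually flat, which proves the sufficient condition.

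For (i), assume that $\overline{F}$ is dually flat. The bracket then vanishes identically, and I would contract it with $m^j$, using $m_jm^j=\varepsilon$ and $\ell_jm^j=0$ to kill the $\ell_j\ell^k$--group and isolate
\[
\varepsilon\{4\phi_{;2}(F^2\partial_k\phi+F\partial_k F)+2F(F\partial_k\phi+\partial_k F)_{;2}\}\ell^k - 4Fm^j(F\partial_j\phi+\partial_j F)=0.
\]
The next step is to translate each contracted term into the modified Berwald frame by applying the technical identities \eqref{eq}. Specifically, (a) and (c) unpack $F^2\ell^k\partial_k\phi$ and $\ell^k\partial_k F^2$, (b) and (d) unpack $Fm^k\partial_k\phi$ and $m^k\partial_kF^2$, while the more intricate (e) and (f) handle $F^2\ell^k(\partial_k\phi)_{;2}$ and $\ell^k(\partial_kF^2)_{;2}$. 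After substitution, one recognizes the combination $\phi_{;2}\phi_{,1}+\phi_{,1;2}-2\phi_{,2}$ that, via formula \eqref{formula of Q only}, is exactly $2\varepsilon Q/(\rho F^2)$; the remaining residue collects precisely into the terms $\frac{2}{F\rho}(Q+\varepsilon G^km_k)$, $\frac{2\varepsilon\phi_{;2}}{F}G^km_k$, $-G^i_k\ell_im^k$, $-\phi_{;2}G^i_km_im^k$, and $\varepsilon F(\phi_{;2}\phi_{,1}-\phi_{,2})$.

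The main obstacle is the delicate bookkeeping in this last reduction: identities \eqref{eq}(e) and \eqref{eq}(f) each produce several summands involving $G^i_k$, $\mathcal{I}$, $\phi_{;2;2}$ and the horizontal derivatives of $\phi$, and one must verify that the $\phi_{;2;2}$-- and $\mathcal{I}$--terms combine with the $(\phi_{;2})^2$--piece of the conformal factor in exactly the manner that completes the square to produce $Q$ via $\rho=(\varepsilon+\phi_{;2;2}+\varepsilon\mathcal{I}\phi_{;2}+(\phi_{;2})^2)^{-1}$. Organising the contraction so that these cancellations become transparent, and resisting the temptation to over-simplify before the $Q$--pattern is recognised, is where the bulk of the work lies.
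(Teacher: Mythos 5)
Your proposal is correct and follows essentially the same route as the paper: the paper's proof likewise derives the identity $y^{j}\dot{\partial}_{i}\partial_{j}\overline{F}^{2}-2\partial_{i}\overline{F}^{2}=2Fe^{2\phi}[\{2\phi_{;2}(F\partial_{j}\phi+\partial_{j}F)+(F\partial_{j}\phi+\partial_{j}F)_{;2}\}\ell^{j}m_{i}+2(F\partial_{j}\phi+\partial_{j}F)\ell_{i}\ell^{j}-2(F\partial_{i}\phi+\partial_{i}F)]$ (your master identity, up to the overall factor $2F$ and index naming), reads off (ii) immediately, and obtains (i) by contracting with $m^{i}$ and invoking \eqref{eq} and \eqref{formula of Q only}. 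Your only deviation is deriving the master identity by reusing \eqref{G_k coeff 2 form} from the geodesic-spray computation rather than recomputing it, which simply makes explicit the paper's ``one can show that'' step; the final bookkeeping you flag as delicate is likewise left implicit in the paper.
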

\begin{proof}
As $\overline{F}^{2}=e^{2\phi}F^{2}$, then $\partial_{j}\overline{F}^{2}=2e^{2\phi}[F^{2} \partial_{j}\phi+F\partial_{j}F]$.  Consequently, one can show that 
\begin{align}\label{F dually proj flat}
y^{j}\dot{\partial}_{i}(\partial_{j} \overline{F}^{2})-2\partial_{i}\overline{F}^{2}=&2Fe^{2\phi}[\{2\phi_{;2}(F \partial_{j}\phi+\partial_{j}F)+(F \partial_{j}\phi+\partial_{j}F)_{;2}\}\ell^{j}m_{i}+2(F \partial_{j}\phi+\partial_{j}F)\ell_{i}\ell^{j}\nonumber\\
&-2(F \partial_{i}\phi+\partial_{i}F)].
\end{align}
\textbf{(i)} Let $\overline{F}$ be dually flat, that is, $y^{j}\dot{\partial}_{i}(\partial_{j} \overline{F}^{2})-2\partial_{i}\overline{F}^{2}=0$. Then, the RHS of \eqref{F dually proj flat} vanishes. Contracting the later by $m^i$ and using \eqref{eq} and \eqref{formula of Q only}, we get
$$\frac{2}{F\rho}( Q+\varepsilon  G^k m_k)+\frac{2\varepsilon\phi_{;2}}{F}G^k m_k-G^i_k\ell_i m^k-\phi_{;2}G^i_k m_im^k+\varepsilon F(\phi_{;2}\phi_{,1}-\phi_{,2})=0.$$
\textbf{(ii)} The proof is obtained directly From \eqref{F dually proj flat}. 

\vspace*{-1.1cm}\[\qedhere\]
\end{proof}

The following example shows that the condition $F \partial_{j}\phi+\partial_{j}F=0$ is not a necessary condition for neither projective flatness nor dual flatness of $\overline{F}$ (A Maple's code of the detailed calculations is found in the Appendix). On the other hand, the same condition $F \partial_{j}\phi+\partial_{j}F=0$ is a sufficient condition for both projective flatness and dual flatness of $\overline{F}$

\begin{example}\label{example of non necessary condition of dually flat}
Let $F$ be the Klein metric on the unit ball  $B^n\subset\mathbb{R}^{n},$
$F=\dfrac{\sqrt{|y|^2-(|x|^2|y|^2-\langle x, y\rangle^2)}}{1-|x|^2}$. We get
$$\overline{ F}= e^{\phi} F=\frac{\sqrt{|y|^2-(|x|^2|y|^2-\langle x, y\rangle^2)}}{1-|x|^2}+\frac{\langle x, y\rangle}{1-|x|^2}, $$
where $$\phi=\ln \left(1+\frac{\langle x, y\rangle}{\sqrt{|y|^2-\left(|x|^2|y|^2-\langle x, y\rangle^2\right)}} \right).$$
$\overline{F}$ is both projectively flat and dually flat \emph{\cite{chengshenzhou-on class f dually}}, but $F \partial_{j}\phi+\partial_{j}F\neq0.$
\end{example}


\section{Special cases for the anisotropic conformal factor  }\label{section five}

Based on the importance of the conformal factor $\phi(x,y)$ in studying the geometric objects associated with $\overline{F}$,  we  focus our attention on studying some special cases of $\phi$. We consider two cases: the function $\phi$ is a function of $y$ only and the function $\phi$ is a function of $x$ only. In the later case the  anisotropic conformal transformation reduces to the (isotropic) conformal~transformation~\cite{Hashiguchi76}.

\begin{proposition}
  Let $(M,F)$ be a conic pseudo-finsler surface and consider the anisotropic conformal transformation \eqref{the anisotropic conformal transformation2} with $\phi$ a function of $y $ only. Assume that $F$ is projectively flat. Then, $\overline{F}$ is projectively flat if and only if $\phi_{,2}=0.$
\end{proposition}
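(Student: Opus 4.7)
The plan is to reduce the claim directly to Theorem~\ref{preserve flatenss}. Under the running hypothesis that $F$ is projectively flat, that theorem already tells us that $\overline{F}$ is projectively flat if and only if
\[
\phi_{;2}\,\phi_{,1} + \phi_{,1;2} - 2\phi_{,2} = 0.
\]
So the task reduces to showing that, when $\phi$ depends on $y$ only, this single scalar condition collapses to $\phi_{,2}=0$.

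The decisive step will be to verify that $\phi_{,1}\equiv 0$ in this setting. Since $\phi$ has no $x$-dependence, $\partial_i\phi=0$, and consequently $\delta_i\phi = -G_i^{\,j}\,\dot{\partial}_j\phi$. Equation~\eqref{phi_i.2.dim} gives $F\dot{\partial}_j\phi = \phi_{;2}\,m_j$, while projective flatness of $F$, via Remark~\ref{remark equiv of F proje flat}\textbf{(b)}, forces $G^j = \mathcal{P}\,y^j$ with $\mathcal{P} = y^k\partial_k F/(2F)$. Differentiating with respect to $y^i$ would yield
\[
G_i^{\,j} = (\dot{\partial}_i\mathcal{P})\,y^j + \mathcal{P}\,\delta_i^{\,j},
\]
so that, using $y^j m_j = 0$ from Lemma~\ref{properties.2.dim.Fins}\textbf{(i)}, we obtain $G_i^{\,j}m_j = \mathcal{P}\,m_i$. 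Putting these pieces together gives $\delta_i\phi = -(\phi_{;2}\,\mathcal{P}/F)\,m_i$, and reading off the frame components from $\delta_i\phi = \phi_{,1}\ell_i + \phi_{,2}m_i$ (Lemma~\ref{properties.2.dim.Fins}\textbf{(viii)}) yields $\phi_{,1}=0$ together with $\phi_{,2} = -\phi_{;2}\mathcal{P}/F$.

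With $\phi_{,1}\equiv 0$ on $\mathcal{A}$, the derivative $\phi_{,1;2}$ also vanishes identically, so the Theorem~\ref{preserve flatenss} condition simplifies to $-2\phi_{,2}=0$; this settles the forward direction. For the converse, if $\phi_{,2}=0$ then both frame components of $\delta_i\phi$ vanish, so $\delta_i\phi = 0$, and Theorem~\ref{phi dh-closed} would then give $\overline{G}^i = G^i = \mathcal{P}\,y^i$, showing $\overline{F}$ is projectively flat. The only mild obstacle is the handling of $G_i^{\,j}m_j$; once one combines $y^jm_j = 0$ with the projective-flat form $G^j=\mathcal{P}y^j$, everything reduces to a one-line computation in the modified Berwald frame, and the rest is purely a matter of invoking the earlier theorems.
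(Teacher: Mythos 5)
Your proof is correct and follows essentially the same route as the paper: both reduce the statement to the criterion $\phi_{;2}\phi_{,1}+\phi_{,1;2}-2\phi_{,2}=0$ of Theorem~\ref{preserve flatenss} and then show $\phi_{,1}=0$ by combining $\partial_i\phi=0$ with the projective flatness of $F$. The only cosmetic difference is that the paper contracts $\delta_i\phi=-\tfrac{\phi_{;2}}{F}G^r_im_r$ with $\ell^i$ and invokes $G^rm_r=0$ from Remark~\ref{remark equiv of F proje flat}, whereas you substitute $G^j=\mathcal{P}y^j$ directly; these are equivalent computations.
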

\begin{proof}
  Since $\phi$ is a function of $y $ only, we get
\begin{equation}\label{equ:for y 1}
\delta_i\phi=\phi_{,1}\ell_{i}+\phi_{,2}m_i=-\frac{\phi_{;2}}{F}G^r_im_r.
\end{equation}
Multiple both sides of \eqref{equ:for y 1} by $\ell^i$, we get
\begin{equation}\label{equ:for y 2}
F^2\phi_{,1}=-2\phi_{;2}G^rm_r.
\end{equation}
As $F$ is projectively flat, from Remark \ref{remark equiv of F proje flat} and \eqref{equ:for y 2}, we have 
\begin{equation}\label{equ:for y 3}
\phi_{,1}=0.
\end{equation}
By Theorem \ref{preserve flatenss} and \eqref{equ:for y 3}, $\overline{F}$ is projectively flat  if and only if $\phi_{,2}=0.$   
  \end{proof}

\begin{proposition}
Let $(M,F)$ be a conic pseudo-finsler surface and consider the anisotropic conformal transformation \eqref{the anisotropic conformal transformation2} with $\phi$ a function of $y $ only. Assume that $\overline{F}$ is projectively flat. Then, a necessary condition for $\overline{F}$ to be dually flat is that 
$$\varepsilon F(\phi_{;2}\phi_{,1}-\phi_{,1})-G^i_k\ell_i m^k=0.$$
\end{proposition}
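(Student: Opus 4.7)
The plan is to start from the necessary condition for dual flatness in Theorem \ref{dually flat}(i), eliminate one summand using Theorem \ref{projetively flat}(i) via the projective flatness of $\overline{F}$, and then use the hypothesis $\phi=\phi(y)$ together with the Euler homogeneity of the spray to rewrite every remaining $G$-contraction in terms of $\phi_{,1}$ and $\phi_{,2}$.

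First, Theorem \ref{dually flat}(i) expresses dual flatness of $\overline{F}$ as
\[
\tfrac{2}{F\rho}\bigl(Q+\varepsilon G^{k}m_{k}\bigr) + \tfrac{2\varepsilon\phi_{;2}}{F}G^{k}m_{k} - G^{i}_{k}\ell_{i}m^{k} - \phi_{;2}\,G^{i}_{k}m_{i}m^{k} + \varepsilon F(\phi_{;2}\phi_{,1}-\phi_{,2}) = 0.
\]
Projective flatness of $\overline{F}$ together with Theorem \ref{projetively flat}(i) gives $Q+\varepsilon G^{k}m_{k}=0$, so the first summand drops out and four terms remain.

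Next, I would exploit $\phi=\phi(y)$. This forces $\partial_{i}\phi=0$, whence $\delta_{i}\phi=-G^{j}_{i}\dot{\partial}_{j}\phi$; combining with $\dot{\partial}_{j}\phi=(\phi_{;2}/F)\,m_{j}$ from \eqref{phi_i.2.dim} and the frame expansion $\delta_{i}\phi=\phi_{,1}\ell_{i}+\phi_{,2}m_{i}$ of Lemma \ref{properties.2.dim.Fins}(viii) yields
\[
F\phi_{,1}\ell_{i}+F\phi_{,2}m_{i} = -\phi_{;2}\,G^{j}_{i}m_{j}.
\]
Contracting this identity with $\ell^{i}$ and with $m^{i}$ produces the two key relations
\[
\phi_{;2}\,G^{j}_{i}\ell^{i}m_{j} = -F\phi_{,1}, \qquad \phi_{;2}\,G^{i}_{k}m_{i}m^{k} = -\varepsilon F\phi_{,2},
\]
the second of which immediately disposes of the $\phi_{;2}G^{i}_{k}m_{i}m^{k}$ summand.

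Finally, to absorb the lone $G^{k}m_{k}$ term, I would invoke $h(2)$-homogeneity of the spray coefficients: $y^{i}G^{k}_{i}=2G^{k}$ rewrites $G^{k}m_{k}=\tfrac{F}{2}\,G^{k}_{i}\ell^{i}m_{k}$, and the first contracted identity above then delivers $G^{k}m_{k}=-F^{2}\phi_{,1}/(2\phi_{;2})$, so that $\tfrac{2\varepsilon\phi_{;2}}{F}G^{k}m_{k}=-\varepsilon F\phi_{,1}$. Substituting everything back, the two $\varepsilon F\phi_{,2}$ contributions cancel and what remains is precisely $\varepsilon F(\phi_{;2}\phi_{,1}-\phi_{,1})-G^{i}_{k}\ell_{i}m^{k}=0$, as claimed. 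The main obstacle is careful index bookkeeping: the contractions produced naturally by $\delta_{i}\phi$ pair the upper and lower slots of $G^{j}_{i}$ with $m$ and $\ell$ in the opposite order to the $G^{i}_{k}\ell_{i}m^{k}$ appearing in the dual flatness condition, and it is precisely the Euler identity $y^{i}G^{k}_{i}=2G^{k}$ that bridges the two contraction patterns via $G^{k}m_{k}$.
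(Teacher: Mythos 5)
Your proof is correct and follows essentially the same route as the paper: the paper likewise drops the $\tfrac{2}{F\rho}(Q+\varepsilon G^km_k)$ term via Theorem \ref{projetively flat}(i) and then invokes identities \textbf{(a)} and \textbf{(b)} of \eqref{eq} specialized to $\partial_k\phi=0$, which are exactly the two contracted relations you re-derive from $\delta_i\phi=-G^j_i\dot{\partial}_j\phi$. The only cosmetic difference is that your intermediate step $G^km_k=-F^2\phi_{,1}/(2\phi_{;2})$ needlessly divides by $\phi_{;2}$; keeping the relation in the form $2\phi_{;2}G^km_k=-F^2\phi_{,1}$, as the paper does, avoids any assumption that $\phi_{;2}\neq 0$.
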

\begin{proof}
Since $\overline{F}$ is projectively flat, then by Theorem \ref{projetively flat}, we have 
\begin{equation}\label{first formula phi of y}
Q+\varepsilon G^km_k=0.
\end{equation} 
Also, $\phi$ is  function of $y$ only, from $(a)$ and $(b)$ of \eqref{eq}, we get
\begin{equation}\label{second formula phi of y}
\frac{2\varepsilon\phi_{;2}}{F}G^k m_k-\phi_{;2}G^i_k m_im^k+\varepsilon F(\phi_{,1}-\phi_{,2})=0.
\end{equation}
From \eqref{first formula phi of y}, \eqref{second formula phi of y} and Theorem \ref{dually flat} if  $\overline{F}$ is  dually flat, then $\varepsilon F(\phi_{;2}\phi_{,1}-\phi_{,1})-G^i_k\ell_i m^k=0.$    
\end{proof}

\begin{lemma}\label{phi of x only}
  Let $(M, F)$  be a conic pseudo-Finsler surface and $\overline{F}=e^{\phi }F$ be the anisotropic conformal transformation \eqref{the anisotropic conformal transformation2}. If $\phi$ is function of $x$ only, then
\begin{equation}\label{condition rho varepslon}
    \sigma=0,    \quad     \rho=\varepsilon.
  \end{equation}
\end{lemma}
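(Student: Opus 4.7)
The plan is to unwind the definitions of $\phi_{;2}$, $\sigma$, and $\rho$ and observe that $y$-independence of $\phi$ kills all the relevant derivatives.

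First I would recall from Lemma \ref{properties.2.dim.Fins} \textbf{(vi)} that for any smooth function $f$ on $\mathcal{A}$ one has $F\dot{\partial}_i f = f_{;1}\ell_i + f_{;2}m_i$, with $f_{;2} = \varepsilon F(\dot{\partial}_i f)m^i$. Applying this with $f = \phi$, and using the hypothesis that $\phi$ depends on $x$ only (so that $\dot{\partial}_i\phi = 0$), I get $\phi_{;2} = 0$ immediately. Then from Remark \ref{remark phi and rho } \textbf{(b)} we have $F\dot{\partial}_i\phi_{;2} = \phi_{;2;2}\,m_i$; but $\phi_{;2} = 0$ forces $\phi_{;2;2} = 0$ as well.

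Substituting these two vanishings into the formula $\sigma = \phi_{;2;2} + \varepsilon\mathcal{I}\phi_{;2} + 2(\phi_{;2})^2$ from Remark \ref{remark phi and rho } \textbf{(a)} yields $\sigma = 0$. Substituting the same into $\rho = \dfrac{1}{\varepsilon + \phi_{;2;2} + \varepsilon\mathcal{I}\phi_{;2} + (\phi_{;2})^2}$ yields $\rho = \dfrac{1}{\varepsilon} = \varepsilon$, since $\varepsilon = \pm 1$.

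There is essentially no obstacle here: the lemma is a direct bookkeeping consequence of $\dot{\partial}_i\phi = 0$, and the only thing to be careful about is to notice that $\phi_{;2} = 0$ automatically propagates to $\phi_{;2;2} = 0$ via the formula in Remark \ref{remark phi and rho } \textbf{(b)}, so no additional hypothesis is needed.
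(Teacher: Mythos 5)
Your proposal is correct and follows essentially the same route as the paper: both derive $\phi_{;2}=0$ from $\dot{\partial}_i\phi=0$, conclude $\phi_{;2;2}=0$, and then read off $\sigma=0$ and $\rho=1/\varepsilon=\varepsilon$ from the formulas in Remark \ref{remark phi and rho }. The only difference is that you make explicit the propagation $\phi_{;2}=0\Rightarrow\phi_{;2;2}=0$, which the paper leaves implicit.
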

\begin{proof}
  Since $\phi$ is independent of $y$, then $\phi_{;2}=0$ and $ \sigma=\phi_{;2;2}+\varepsilon \mathcal{I}\phi_{;2}+2(\phi_{;2})^{2}=0.$  Also we have $ \rho=\dfrac{1}{\sigma+\varepsilon-(\phi_{;2})^{2}}=\dfrac{1}{\varepsilon}=\varepsilon.$
\end{proof}

\begin{lemma}
 Let $(M,F)$ be a conic pseudo-Finsler surface and $\overline{F}(x,y)=e^{\phi(x,y)}F(x,y).$ Then, $\overline{g}_{ij}  =  e^{2\phi} g_{ij}$ if and only if $\phi$ is either a function of $x$ only or is  a constant (homothetic).
\end{lemma}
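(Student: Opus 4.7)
The plan is to start from the explicit formula for $\overline{g}_{ij}$ in the modified Berwald frame, namely the expression
\[
\overline{g}_{ij} = e^{2\phi}\bigl[g_{ij}+\phi_{;2}(\ell_{i}m_{j}+\ell_{j}m_{i})+\sigma\, m_{i}m_{j}\bigr]
\]
obtained in Proposition \ref{lemma of g_ij}. The condition $\overline{g}_{ij}=e^{2\phi}g_{ij}$ is therefore equivalent to the tensorial equation
\[
\phi_{;2}(\ell_{i}m_{j}+\ell_{j}m_{i})+\sigma\, m_{i}m_{j}=0. \tag{$\ast$}
\]
So the lemma will follow once we show that $(\ast)$ forces $\phi$ to be independent of $y$ (and the converse, which is immediate).

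For the forward direction I would extract scalar equations from $(\ast)$ by contracting against the dual frame. Contracting with $\ell^{i}m^{j}$ and using Lemma \ref{properties.2.dim.Fins} (parts \textbf{(i)}, \textbf{(ii)}) to simplify $\ell^i\ell_i=1$, $m^jm_j=\varepsilon$, $\ell^i m_i=\ell_j m^j=0$, the cross-term $\sigma m_im^i \ell^j m_j$ vanishes and one is left with $\varepsilon\phi_{;2}=0$, so $\phi_{;2}=0$. Feeding this back into $(\ast)$ and contracting with $m^im^j$ gives $\sigma=0$ (which is in any case automatic, since $\sigma=\phi_{;2;2}+\varepsilon\mathcal{I}\phi_{;2}+2\phi_{;2}^{2}$ collapses to $0$ once $\phi_{;2}=0$).

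Next I combine $\phi_{;2}=0$ with the homogeneity of $\phi$. Since $\phi$ is $h(0)$, Euler's relation yields $\phi_{;1}=0$. Applying Lemma \ref{properties.2.dim.Fins} \textbf{(vi)} we obtain
\[
F\,\dot{\partial}_{i}\phi = \phi_{;1}\,\ell_{i}+\phi_{;2}\,m_{i}=0,
\]
hence $\dot{\partial}_{i}\phi=0$ on $\mathcal{A}$, which exactly means that $\phi$ depends only on the position $x$ (constant $\phi$ being the trivial homothetic subcase). Conversely, if $\phi=\phi(x)$ then $\phi_{;2}=\phi_{;2;2}=0$, so $\sigma=0$, and substitution into the formula for $\overline{g}_{ij}$ immediately gives $\overline{g}_{ij}=e^{2\phi}g_{ij}$.

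There is no real obstacle here beyond being careful with the contractions: the only delicate point is recognising that $(\ast)$ contains two independent tensorial pieces ($\ell_{(i}m_{j)}$ and $m_{i}m_{j}$) whose coefficients must vanish separately, which is made transparent by pairing $(\ast)$ with $\ell^{i}m^{j}$ and with $m^{i}m^{j}$ using the orthogonality and normalisation relations of the modified Berwald frame.
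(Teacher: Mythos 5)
Your proposal is correct and follows essentially the same route as the paper: both start from the frame expression $\overline{g}_{ij}=e^{2\phi}[g_{ij}+\phi_{;2}(\ell_{i}m_{j}+\ell_{j}m_{i})+\sigma m_{i}m_{j}]$, contract the resulting condition with $\ell^{i}m^{j}$ to obtain $\phi_{;2}=0$, and conclude via $F\dot{\partial}_{i}\phi=\phi_{;2}m_{i}$ that $\phi$ depends on $x$ only (the extra contraction with $m^{i}m^{j}$ to get $\sigma=0$ is, as you note, redundant). The converse is handled identically in both arguments.
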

\begin{proof}
Let $\phi$ be a function of $x$ only and $\overline{F}=e^{\phi}F$. It is obvious that $\overline{g}_{ij}  =  e^{2\phi} g_{ij}$ (with no further condition). Conversely, let  $\overline{g}_{ij}  =  e^{2\phi} g_{ij}.$ Then, by \eqref{2.form.g_ij.bar.2.dim}, we get $$e^{2\phi}[\phi_{;2}(\ell_{i}m_{j}+\ell_{j}m_{i})+\sigma m_{i}m_{j}]=0. $$
 Contracting  both sides of the previous equation by $\ell^{i}m^{j}$, we obtain
$\phi_{;2}=0. $
Hence, by \eqref{phi_i.2.dim}, $\phi$ is either a function of $x$ only or is  a constant.
\end{proof}
\begin{proposition}\label{transform for conformal}
  Let $\overline{F}=e^{\phi} F$ be an anisotropic conformal transformation. If $\phi $ is a function of $x $ only, then
 \begin{description}
   \item[(i)] $\overline{g}_{ij}=e^{2\phi}g_{ij},\qquad\overline{g}^{ij}=e^{-2\phi}g^{ij},$
   \item[(ii)] $\overline{\ell}^i=e^{-\phi}\ell^{i},\qquad\overline{\ell}_{i}=e^{\phi}\ell_{i},$
   \item[(iii)]$\overline{m}_{i}=e^{\phi}m_{i},$ \qquad $\overline{m}^{i}=e^{-\phi}m^{i},$\qquad  $\overline{h} _{ij}=e^{2\phi}h_{ij},$
   \item[(iv)] $\overline{C} _{ijk}=e^{2\phi}C_{ijk},\qquad \overline{C}^{i} _{jk}=C^{i}_{jk}$,\qquad $\overline{\mathcal{I}}=\mathcal{I},$
   \item[(v)]$ \overline{G}^{i}=  G^{i}+\frac{1}{2}F^{2}(\phi_{,1}\ell^{i}-\phi_{,2}m^{i}).$
 \end{description}
\end{proposition}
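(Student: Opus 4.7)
The proof reduces to a straightforward specialization of the previously established formulas once we exploit the fact that $\phi$ depending only on $x$ forces $\phi_{;2}=0$, and then Lemma~\ref{phi of x only} gives $\sigma=0$ and $\rho=\varepsilon$. The plan is to take each item in turn and substitute these values into the corresponding earlier identity; the only nontrivial calculation is in part (v), where the dependence on $\phi_{,1;2}$ has to be unwound.

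First, for (i), I would substitute $\phi_{;2}=0$ and $\sigma=0$ into \eqref{2.form.g_ij.bar.2.dim} to collapse the bracket to $g_{ij}$, obtaining $\overline{g}_{ij}=e^{2\phi}g_{ij}$. The dual formula $\overline{g}^{ij}=e^{-2\phi}g^{ij}$ falls out either by substituting $\phi_{;2}=0$, $\sigma=0$, $\rho=\varepsilon$ into \eqref{g^ij.bar.2.dim} or by inverting $\overline{g}_{ij}=e^{2\phi}g_{ij}$ directly. For (ii) and (iii), I would apply Proposition~\ref{tras-of.l_i.l^j.c_ijk}: its parts (i) and (iii) immediately give $\overline{\ell}_i=e^{\phi}\ell_i$, $\overline{\ell}^i=e^{-\phi}\ell^i$, $\overline{m}_i=e^{\phi}\sqrt{\varepsilon/\varepsilon}\,m_i=e^{\phi}m_i$ and $\overline{m}^i=e^{-\phi}\sqrt{\varepsilon\cdot\varepsilon}\,m^i=e^{-\phi}m^i$, and (ii) of that proposition yields $\overline{h}_{ij}=e^{2\phi}h_{ij}$ since the coefficient $\sigma-(\phi_{;2})^2$ vanishes.

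For (iv), the extra terms in Proposition~\ref{transform of cartan}(i), namely $\varepsilon\mathcal{I}\sigma+\tfrac{1}{2}\sigma_{;2}+\phi_{;2}(\sigma+2\varepsilon)$, all vanish (note $\sigma_{;2}=0$ since $\sigma\equiv 0$), so $\overline{C}_{ijk}=e^{2\phi}C_{ijk}$. Proposition~\ref{transform of cartan}(ii) likewise collapses to $\overline{\mathcal{I}}=(\varepsilon\cdot\varepsilon)^{3/2}\mathcal{I}=\mathcal{I}$, and Proposition~\ref{transform of cartan}(iii) together with $\varepsilon\rho=1$ gives $F\overline{C}^{i}_{jk}=FC^{i}_{jk}$, hence $\overline{C}^{i}_{jk}=C^{i}_{jk}$.

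The main point to verify is (v). From \eqref{coefficet of spray 2}, with $\phi_{;2}=0$, we have $\overline{G}^i=G^i+Qm^i+\tfrac{1}{2}F^2\phi_{,1}\ell^i$, so the claim will follow once we show $Q=-\tfrac{1}{2}F^2\phi_{,2}$. From \eqref{formula of Q only}, $2Q=\varepsilon\rho F^2(\phi_{;2}\phi_{,1}+\phi_{,1;2}-2\phi_{,2})=F^2(\phi_{,1;2}-2\phi_{,2})$. Here the key identity is $\phi_{,1;2}=\phi_{,2}$: since $\dot{\partial}_j\phi=0$ we have $\delta_i\phi=\partial_i\phi$, so $\phi_{,1}=(\partial_j\phi)\ell^j=(\partial_j\phi)y^j/F$, which is $h(0)$, and differentiating vertically and contracting with $m^i$ using Lemma~\ref{properties.2.dim.Fins}(vi) gives $\varepsilon\phi_{,1;2}=F(\dot{\partial}_i\phi_{,1})m^i=(\partial_j\phi)m^j=\varepsilon\phi_{,2}$. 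Substituting back yields $2Q=-F^2\phi_{,2}$, and therefore
\[
\overline{G}^i=G^i+\tfrac{1}{2}F^2(\phi_{,1}\ell^i-\phi_{,2}m^i),
\]
completing the proof. The only genuine obstacle is the identity $\phi_{,1;2}=\phi_{,2}$; everything else is direct substitution.
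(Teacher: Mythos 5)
Your proof is correct, and parts (i)--(iv) proceed exactly as the paper does: substitute $\phi_{;2}=0$, $\sigma=0$, $\rho=\varepsilon$ from Lemma~\ref{phi of x only} into \eqref{2.form.g_ij.bar.2.dim}, \eqref{g^ij.bar.2.dim}, Proposition~\ref{tras-of.l_i.l^j.c_ijk} and Proposition~\ref{transform of cartan}. The only place you diverge is part (v). The paper goes back to the raw expressions \eqref{formula for P, Q}, observes that with $\phi_{;2}=0$ and $\rho=\varepsilon$ they collapse to $P=\tfrac{1}{2}F^{2}\ell^{k}\partial_{k}\phi$ and $Q=-\tfrac{1}{2}\varepsilon F^{2}m^{k}\partial_{k}\phi$, and then converts $\ell^{k}\partial_{k}\phi$ and $m^{k}\partial_{k}\phi$ into $\phi_{,1}$ and $\phi_{,2}$ via identities (a) and (b) of \eqref{eq}. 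You instead specialize the compact formulas \eqref{coefficet of spray 2} and \eqref{formula of Q only}, which forces you to establish the auxiliary identity $\phi_{,1;2}=\phi_{,2}$; your derivation of it is sound: $\delta_{i}\phi=\partial_{i}\phi$ when $\dot{\partial}_{j}\phi=0$, and $F\dot{\partial}_{i}\ell^{j}=\varepsilon m^{j}m_{i}$ together with $m_{i}m^{i}=\varepsilon$ gives exactly $\varepsilon\phi_{,1;2}=(\partial_{j}\phi)m^{j}=\varepsilon\phi_{,2}$, whence $2Q=F^{2}(\phi_{,1;2}-2\phi_{,2})=-F^{2}\phi_{,2}$. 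The two routes are computationally equivalent; the paper's avoids introducing a new identity by reusing \eqref{eq}, while yours stays entirely within the streamlined formulas \eqref{formula of Q only}--\eqref{formula of P only} and makes explicit a small fact ($\phi_{,1;2}=\phi_{,2}$ for positional $\phi$) that the paper leaves implicit inside \eqref{eq}(e). Either way the conclusion $\overline{G}^{i}=G^{i}+\tfrac{1}{2}F^{2}(\phi_{,1}\ell^{i}-\phi_{,2}m^{i})$ follows; there is no gap.
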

\begin{proof} The proof of
\textbf{(i)}-\textbf{(iv)} follow  by substituting  \eqref{condition rho varepslon}  into   Propositions \ref{tras-of.l_i.l^j.c_ijk} and   \ref{transform of cartan}.\\
   \textbf{(v)} If $\phi$ is a function of $x$ only, then by \eqref{formula for P, Q}, we have
   \begin{equation*}
   P=\frac{1}{2}F^{2}\ell^{i}\partial_{i}\phi, \qquad Q=-\frac{1}{2}\varepsilon F^{2}m^{i}\partial_{i}\phi.
   \end{equation*}
By  $(1)$ and $(2)$ of \eqref{eq} and \eqref{coeff of sprayof F bar}, where $\phi_{;2}=0$, we get
\begin{equation*}\label{p,Q phi of x only 2}
   \overline{G}^{i}=  G^{i}+\frac{1}{2}F^{2}(\phi_{,1}\ell^{i}-\phi_{,2}m^{i}).
   \end{equation*}
   \vspace*{-1.3cm}\[\qedhere\]
\end{proof}
\begin{remark}
In  view of  Proposition \emph{\ref{transform for conformal}}, under the anisotropic conformal transformation with $\phi$ is a function of $x$ only,  we conclude the following
\begin{description}
\item[(a)] $\overline{S}$ and $ S$ coincide if and only if  $\phi$ is homothetic.
\item[(b)]From \emph{\textbf(v)} and \eqref{Eq:of coeff spray 2 dim}, we get 
\begin{equation*}
\overline{G}^{i}=  \frac{1}{2}F[(\ell^r\partial_{r}F+F\phi_{,1})\ell^{i}+F(\frac{\mathfrak{M}}{h}-\phi_{,2})m^{i}].
\end{equation*} 
\item[(c)]From \emph{\textbf{(b)}} the geodesic spray of $\overline{F}$ is flat if and only if $\ell^r\partial_{r}F+F\phi_{,1}=0$ and $\frac{\mathfrak{M}}{h}=\phi_{,2}.$
\end{description}
 \end{remark} 

\section{Concluding remarks}
We end the present paper with the following comments and remarks.
In this paper, we have  investigated the notion of anisotropic conformal transformation of conic pseudo-Finsler metrics in two-dimensional manifolds using the associated modified Berwald frame. The following points are to be singled out: 
\begin{itemize}

\item The anisotropic conformal change of a pseudo-Finsler metric $F(x, y)$ does not necessarily yield a pseudo-Finsler metric $\overline{F}(x,y)$.  Consequently, we find out the necessary and sufficient condition for $\overline{F}(x,y) = e^{\phi(x,y)}F(x, y)$ to be a pseudo-Finsler metric. This is a crucial result which ensures that two (conic) pseudo-Finsler surfaces may or may not be anisotropically conformally related, contrary to the \lq\lq pure" Finslerian case where any two Finsler metrics are anisotropically conformally related ( $\overline{F}=(\frac{\overline{F}}{F})F$ ). This justifies our choice of the object of study and certifies that our study is nontrivial.

\item We study some geometric objects associated with the transformed metric such as, the Berwald frame, metric tensor, Cartan tensor, inverse metric tensor, main scalar, geodesic spray, Barthel connection and Berwald connection.

\item It has been proved that the geodesic spray is an anisotropic conformal invariant if and only if the conformal factor $\phi(x,y)$ has the property that $\delta_i\phi=0$ or equivalently is $d_h$-closed. On the other hand, it is well-known that in the (isotropic) conformal case, the geodesic spray is an (isotropic) conformal invariant if and only if the conformal factor $\phi(x)$ is homothetic \cite{Bacso Sandor-Cheng, Shenbook16}. As the condition of being $d_h$-closed is more general than the condition of being homothetic, our result includes as a special case the (isotropic) conformal result. On the other hand, as a consequence of our result, the anisotropic conformal invariance of any one of the geodesic spray, Cartan nonlinear connection or Berwald connection implies the aniotropic conformal invariance of the other ones.

\item Under the anisotropic conformal transformation $\overline{F}= e^{\phi}F$, for two projectively related conic pseudo-Finsler surfaces, the pojective flatness property is preserved. Moreover, the condition $F\partial_j\phi+\partial_jF=0$ is a sufficient condition for both projective flatness and dual flatness of $\overline{F}$, but is neither a necessary condition for projective flatness nor dual flatness of $\overline{F}$.

\item Two interesting special cases are considered: (i) when the anisotropic conformal factor depends on position only, (ii) when the anisotropic conformal factor depends on direction only. In the first case our anisotropic conformal transformation reduces to the well-known (isotropic) conformal transformation which was initiated in \cite{Hashiguchi76} and in the second case some interesting results are obtained.
\end{itemize}

\emph{This work will be continued in a forthcoming paper:  \quotes{Anisotropic conformal transformation of conic pseudo-Finsler surface, $II$}, where further geometric properties along with special Finsler spaces will be investigated}.

\addcontentsline{toc}{section}{\refname}

\includepdf[pages=1-4]{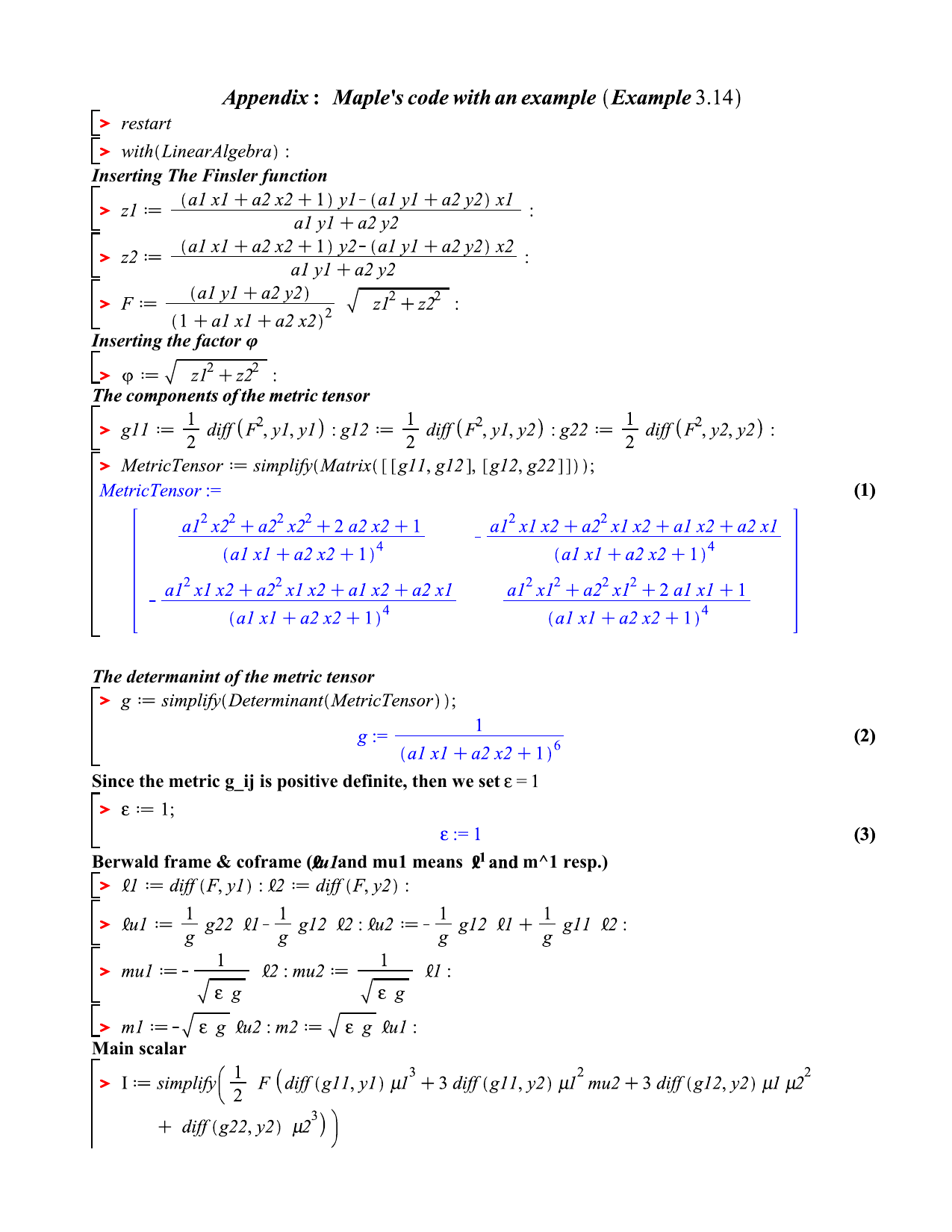}

\end{document}